\newcommand{\ff}{\footnote}
\let\nc\newcommand
\theoremstyle{plain}
\newtheorem*{thm}{Theorem}
\newtheorem*{prop}{Proposition}
\newtheorem*{cor}{Corollary}
\newtheorem*{lem}{Lemma}
\newtheorem*{conjecture}{Conjecture}
\theoremstyle{definition}
\newtheorem*{defn}{Definition}
\newtheorem*{remark}{Remark}
\nc{\bdm}{\begin{displaymath}}
\nc{\edm}{\end{displaymath}}
\nc{\bthm}{\begin{thm}}
\nc{\ethm}{\end{thm}}
\nc{\blem}{\begin{lem}}
\nc{\elem}{\end{lem}}
\nc{\bcor}{\begin{cor}}
\nc{\ecor}{\end{cor}}
\nc{\beq}{\begin{equation}}
\nc{\eeq}{\end{equation}}
\nc{\bprop}{\begin{prop}}
\nc{\eprop}{\end{prop}}
\nc{\bdefn}{\begin{defn}}
\nc{\edefn}{\end{defn}}
\nc{\Z}{\mathbb{Z}}
\newcommand{\N}{\mathbb{N}}
\newcommand{\Q}{\mathbb{Q}}
\newcommand{\C}{\mathbb{C}}
\newcommand{\h}{\mathfrak{h}}
\renewcommand{\Im}{\mbox{\textrm{Im}}\,}
\newcommand{\rad}{\mbox{\textrm{rad}}\,}
\nc{\Hom}{\textrm{Hom}}
\nc{\rank}{\textrm{rank} \,}
\nc{\ds}{\dots}
\let\mc\mathcal
\let\mf\mathfrak
\nc{\HW}{\bar{H}_{\mathbf{c}}(W)}
\nc{\HK}{\bar{H}_{\mathbf{c}}(K)}
\nc{\HtK}{\widetilde{H}_{\mathbf{c}}(K)}
\nc{\CMW}{\textsf{CM}_{\mbf{c}}(W)}
\nc{\CMK}{\textsf{CM}_{\mbf{c}}(K)}
\nc{\mbf}{\mathbf}
\nc{\LK}{\textsf{Irr}(K)}
\nc{\LW}{\textsf{Irr}(W)}
\nc{\Res}{\mathsf{Res} \, }
\nc{\Ind}{\mathsf{Ind} \, }
\nc{\cont}{\textrm{cont}}
\renewcommand{\mod}{\textrm{mod}}
\nc{\eWb}{\mathbf{e}_{W_b}}
\nc{\eW}{\mathbf{e}_{W}}
\nc{\msf}{\mathsf}
\nc{\Ui}{\mc{U}_{i,+}}
\nc{\Uone}{\mc{U}_{1,+}}
\nc{\Utwo}{\mc{U}_{2,+}}
\newcommand{\mmod}{\text{-}\mathrm{mod}}
\nc{\minusone}{-1}
\nc{\minustwo}{-2}
\nc{\p}{\partial}
\begin{document}

\title{The Calogero-Moser partition for $G(m,d,n)$}

\author{Gwyn Bellamy}

\address{School of Mathematics and Maxwell Institute for Mathematical Sciences, University of Edinburgh, James Clerk Maxwell Building, Kings Buildings, Mayfield Road, Edinburgh EH9 3JZ, Scotland}
\email{G.E.Bellamy@sms.ed.ac.uk (current address Gwyn.Bellamy@Manchester.ac.uk)}

\begin{abstract}
\noindent We show that it is possible to deduce the ``Calogero-Moser partition'' of the irreducible representations of the complex reflection groups $G(m,d,n)$ from the corresponding partition for $G(m,1,n)$. This confirms, in the case $W = G(m,d,n)$, a conjecture of Gordon and Martino relating the Calogero-Moser partition to Rouquier families for the corresponding cyclotomic Hecke algebra.   
\end{abstract}

\maketitle

\section{Introduction}

\subsection{} Let $W$ be a finite complex reflection group. Associated to $W$ is a family of noncommutative algebras, the rational Cherednik algebras. These algebras depend on a pair of parameters, $t$ and $\mbf{c}$ (precise definitions are given in section \ref{subsection:defns}). At $t = 0$ the algebras are finite modules over their centres. The aim of this paper is to continue the study of a certain finite dimensional quotient of the rational Cherednik algebra at $t = 0$, the restricted rational Cherednik algebra. The blocks of the restricted rational Cherednik algebra induce a partitioning of the set $\LW$ of irreducible $W$-modules, called the Calogero-Moser partition. Using the geometry of certain quiver varieties, Gordon and Martino \cite{12} have given an explicit combinatorial description of the Calogero-Moser partition when $W = C_m \wr S_n$. We show that Clifford theoretic arguments can be use to extend this result to the normal subgroups $G(m,d,n)$ of $C_m \wr S_n$. In their paper \cite{12}, Gordon and Martino conjecture that the Calogero-Moser partition should be related, in some precise way, to the Rouquier blocks of a particular Hecke algebra associated to the same complex reflection group $W$. This conjecture is refined in \cite{Mo} and, by comparing the combinatorial description of these partitions, is shown to be true when $W = C_m \wr S_n$. A consequence of the main result of this paper is that the conjecture as stated in \cite[Conjecture 2.7 (i)]{Mo}, is true for all $G(m,d,n)$. However it is important to note here that, when $n = 2$ and $d$ is even, there are certain ``unequal parameter'' cases where our methods fail (see (\ref{sec:unequal}) for details). In these cases it is not known what the Calogero-Moser partition is.\ff{Mathematics Subject Classification (2010) 16G99,05E10.}    

\section{The rational Cherednik algebra at $t = 0$}

\subsection{Definitions and notation}\label{subsection:defns}

Let $W$ be a complex reflection group, $\mathfrak{h}$ its reflection representation over $\C$ with rank $\mathfrak{h} = n$, and $\mathcal{S}(W)$ the set of all complex reflections in $W$. Let $( \cdot, \cdot ) : \mathfrak{h} \times \mathfrak{h}^* \rightarrow \C$ be the natural pairing defined by $(y,x) = x(y)$. For $s \in \mathcal{S}(W)$, fix $\alpha_s \in \mathfrak{h}^*$ to be a basis of the one dimensional space $\Im (s - 1)|_{\mathfrak{h}^*}$ and $\alpha_s^{\vee} \in \mathfrak{h}$ a basis of the one dimensional space $\Im (s - 1)|_{\mathfrak{h}}$, normalised so that $\alpha_s(\alpha_s^\vee) = 2$. Choose $\mathbf{c} : \mathcal{S}(W) \rightarrow \C$ to be a $W$-equivariant function and $t$ a complex number. The \textit{rational Cherednik algebra}, $H_{t,\mathbf{c}}(W)$, as introduced by Etingof and Ginzburg \cite[page 250]{1}, is the quotient of the skew group algebra of the tensor algebra, $T(\mf{h} \oplus \mf{h}^*) \rtimes W$, by the ideal generated by the relations

\begin{equation}\label{eq:rel}
[x_1,x_2] = 0, \qquad [y_1,y_2] = 0, \qquad [x_1,y_1] = t (y_1,x_1) - \sum_{s \in \mathcal{S}} \mathbf{c}(s) (y_1,\alpha_s)(\alpha_s^\vee,x_1) s, 
\end{equation}
\noindent for all $x_1,x_2 \in \mathfrak{h}^* \textrm{ and } y_1,y_2 \in \mathfrak{h}$.\\

\noindent For any $\nu \in \C \backslash \{ 0 \}$, the algebras $H_{\nu t,\nu \mathbf{c}}(W)$ and $H_{t,\mathbf{c}}(W)$ are isomorphic. In this article we will only consider the case $t = 0$, therefore we are free to rescale $\mbf{c}$ by $\nu$ whenever this is convenient.\\

\noindent A fundamental result for rational Cherednik algebras, proved by Etingof and Ginzburg \cite[Theorem 1.3]{1}, is that the PBW property holds for all $t, \mbf{c}$. That is, there is a vector space isomorphism 
\beq\label{eq:PBW}
H_{t, \mbf{c}}(W) \stackrel{\sim}{\rightarrow} \C [\h] \otimes \C W \otimes \C [\h^*].
\eeq

\subsection{The restricted rational Cherednik algebra}\label{sec:restricteddefinition} Let us now concentrate on the case $t = 0$, and we omit $t$ from the notation. In this case the algebra $H_{\mbf{c}}(W)$ is a finite module over its centre $Z_{\mbf{c}}(W)$. By \cite[Proposition 4.5]{1}, we have an inclusion $A = \C[\mf{h}]^W\otimes\C[\mf{h}^*]^W \subset Z_{\mathbf{c}}$. This allows us to define the \textit{restricted rational Cherednik algebra} $\HW$ as

\begin{displaymath}
\HW = \frac{H_{\mathbf{c}}(W)}{A_+H_{\mathbf{c}}(W)},
\end{displaymath}

\noindent where $A_+$ denotes the ideal in $A$ of elements with zero constant term. The PBW property (\ref{eq:PBW}) implies that $\HW \cong \C [\h]^{co W} \otimes \C W \otimes \C [\h^*]^{coW}$ as vector spaces, where $\C [\h]^{co W} := \C[ \h] / \langle \C[\h]^W_+ \rangle$ is the coinvariant ring of $\h$ with respect to $W$. In particular, $\dim \HW = |W|^3$. The inclusion $\C[\mathfrak{h}]^W \otimes \C[\mathfrak{h}^*]^W \hookrightarrow Z_{\mathbf{c}}(W)$ defines a surjective, finite morphism $\Upsilon \,  : \,\textrm{Spec}\, (Z_{\mathbf{c}}(W)) \twoheadrightarrow \h^* / W \times \h/W$.

\subsection{The Calogero-Moser partition}\label{sec:partitions}
Fix a complete set of non-isomorphic simple $W$-modules and denote it by $\LW$. Following \cite{12} we define the \textit{Calogero-Moser partition} of $\textrm{Irr} \, \HW$ to be the set of equivalence classes of $\textrm{Irr} \,  \HW$  under the equivalence relation $L \sim M$ if and only if $L$ and $M$ belong to the same block of $\HW$. The set of equivalence classes will be denoted $\CMW$. It has been shown, \cite[Proposition 4.3]{Baby}, that $\textrm{Irr} \, \HW$ can be naturally identified with $\LW$. Thus the Calogero-Moser partition $\CMW$ will be thought of a partition of $\textrm{Irr}(W)$ throughout this article. Given $\lambda, \mu \in \LW$ we say that $\lambda, \mu $ belong to the same partition of $\CMW$ if they are in the same equivalence class. 

\section{Blocks of normal subgroups}

\subsection{} Throughout this section we fix an irreducible complex reflection group $W$ with reflection representation $\h$. Moreover we assume that there exists a normal subgroup $K \triangleleft W$ such that $K$ acts, via inclusion in $W$, on $\h$ as a complex reflection group (though $\h$ need not be irreducible as a $K$-module) and that $W/K \cong C_d$, the cyclic group of order $d$. Since $K$ is normal in $W$, the group $W$ acts on $\mc{S}(K)$ by conjugation. Let us fix a $W$-equivariant function $\mbf{c} \,  : \, \mc{S}(K) \rightarrow \C$. We extend this to a $W$-equivariant function $\mbf{c} \, : \, \mc{S}(W) \rightarrow \C$ by setting $\mbf{c}(s) = 0$ for $s \in  \mc{S}(W) \backslash  \mc{S}(K)$. Note that the partition of $ \mc{S}(K)$ into $K$-orbits can be finer than the corresponding partition into $W$-orbits. Thus a $K$-equivariant function on $\mc{S}(K)$ is not always $W$-equivariant. However, as will be shown below, this problem does not occur in the cases we consider. For our choice of parameter $\mbf{c}$, the defining relations (\ref{eq:rel}) show that the natural map $T (\h \oplus \h^*) \rtimes K \rightarrow H_{t,\mbf{c}}(W)$ descends to an algebra morphism $H_{t,\mbf{c}}(K) \rightarrow H_{t,\mbf{c}}(W)$. The PBW property (\ref{eq:PBW}) shows that this map is injective.

\begin{prop}
For $\mbf{c}$ as defined above, the algebra $H_{t,\mbf{c}}(K)$ is a subalgebra of $H_{t,\mbf{c}}(W)$. 
\end{prop}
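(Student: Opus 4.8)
The plan is to show that the injective algebra map $H_{t,\mbf{c}}(K) \to H_{t,\mbf{c}}(W)$, already constructed in the text from the universal property of the skew group algebra of the tensor algebra, is an algebra homomorphism whose image is genuinely a subalgebra. Most of the work is verifying that the defining relations of $H_{t,\mbf{c}}(K)$ are respected by the map, so that it descends from $T(\h\oplus\h^*)\rtimes K$ to the quotient; the injectivity and the identification of the image as a subalgebra then follow from the PBW isomorphism.

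First I would make explicit that the natural inclusion $K \hookrightarrow W$ together with the identity on $\h \oplus \h^*$ induces, by the universal property of tensor algebras and skew group algebras, an algebra map $T(\h\oplus\h^*)\rtimes K \to T(\h\oplus\h^*)\rtimes W \twoheadrightarrow H_{t,\mbf{c}}(W)$. To see this descends to $H_{t,\mbf{c}}(K)$ I must check that the three families of relations in (\ref{eq:rel}), written for $K$, are sent to zero. The commutator relations $[x_1,x_2]=0$ and $[y_1,y_2]=0$ hold in $H_{t,\mbf{c}}(W)$ regardless of the group, so these pose no difficulty. The essential point is the mixed relation: for $K$ it reads $[x_1,y_1] = t(y_1,x_1) - \sum_{s\in\mc{S}(K)} \mbf{c}(s)(y_1,\alpha_s)(\alpha_s^\vee,x_1)\,s$, whereas in $H_{t,\mbf{c}}(W)$ the same commutator equals $t(y_1,x_1) - \sum_{s\in\mc{S}(W)} \mbf{c}(s)(y_1,\alpha_s)(\alpha_s^\vee,x_1)\,s$.

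The key step is therefore to observe that these two sums agree. This is exactly where the extension convention for $\mbf{c}$ is used: by construction $\mbf{c}(s)=0$ for every $s\in\mc{S}(W)\setminus\mc{S}(K)$, so the sum over $\mc{S}(W)$ collapses to the sum over $\mc{S}(K)$. Hence the image in $H_{t,\mbf{c}}(W)$ of the $K$-relation coincides with the corresponding relation already imposed in $H_{t,\mbf{c}}(W)$, and the map factors through $H_{t,\mbf{c}}(K)$. One subtlety I would flag before this step is well-definedness of the parameter itself: the function $\mbf{c}$ restricted to $\mc{S}(K)$ must be $K$-equivariant for $H_{t,\mbf{c}}(K)$ to be defined, and this is guaranteed because $\mbf{c}$ was chosen $W$-equivariant and $K$-orbits refine $W$-orbits, so $W$-equivariance implies $K$-equivariance.

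Finally, to conclude that the image is a subalgebra isomorphic to $H_{t,\mbf{c}}(K)$, I would invoke the PBW property (\ref{eq:PBW}) for both algebras. Under the resulting triangular decompositions the map sends $\C[\h]\otimes\C K\otimes\C[\h^*]$ into $\C[\h]\otimes\C W\otimes\C[\h^*]$ by the identity on the polynomial factors and the inclusion $\C K\hookrightarrow \C W$ on the middle factor; since this is injective on associated-graded pieces, the map $H_{t,\mbf{c}}(K)\to H_{t,\mbf{c}}(W)$ is injective, and its image is a subalgebra. The main obstacle, though a mild one, is purely bookkeeping: ensuring that $\alpha_s,\alpha_s^\vee$ and the pairings used to define $H_{t,\mbf{c}}(K)$ are literally the same data as those inherited from $W$ (rather than some rescaling), so that the two mixed relations match on the nose rather than only up to normalisation. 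Once that compatibility of normalisations is checked, the argument is complete.
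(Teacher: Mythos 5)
Your proposal is correct and matches the paper's argument, which is given in the sentence preceding the proposition: the extension of $\mbf{c}$ by zero on $\mc{S}(W)\setminus\mc{S}(K)$ makes the $K$-relations collapse into the $W$-relations, so the natural map descends, and injectivity follows from the PBW property. Your additional remarks on $K$-equivariance and on the normalisation of $\alpha_s,\alpha_s^\vee$ are correct but routine, so this is the same proof spelled out in more detail.
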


\subsection{} As explained in the introduction, the goal of this article is to relate the Calogero-Moser partition of $K$ to the Calogero-Moser partition of $W$. However the algebra $\HK$ is not a subalgebra of $\HW$. To overcome this we study an intermediate algebra, $\HtK$, which is defined to be the image of $H_{\mbf{c}}(K)$ in $\HW$. Thus we are in the following setup:
\begin{displaymath}
\vcenter{
\xymatrix{
H_{0,\mbf{c}}(K) \ar@{->>}[d] \ar@{->}[r] & H_{0,\mbf{c}}(W) \ar@{->>}[d] \\
\HtK \ar@{->>}[d] \ar@{->}[r] & \HW \\
\HK & }
}
\end{displaymath}
where the horizontal arrows are inclusions. To be precise, $\HtK := H_{0,\mbf{c}}(K) / A_+ \cdot H_{0,\mbf{c}}(K)$, where $A = \C[\h]^W \otimes \C[\h^*]^W$ and $A_+$ the ideal of polynomials with constant term zero. The PBW property (\ref{eq:PBW}) implies that $\HtK \cong \C[ \h]^{co W} \otimes \C K \otimes \C [\h^*]^{co W}$ and hence has dimension $|K| \cdot |W|^2$. The idea is to relate the block partition of $\HtK$ to $\CMW$ via the formalism of twisted symmetric algebras. The Proposition below shows that this allows us to deduce information about the partition $\CMK$.

\subsection{}\label{thm:lifting} As noted in (\ref{sec:partitions}), the set $\{ L(\lambda) \, | \, \lambda \in \LK \}$ is a complete set of non-isomorphic simple modules for $\HK$. There is a natural surjective map $\HtK \twoheadrightarrow \HK$ and the kernel of this map is generated by certain central nilpotent elements of $\HtK$. Therefore the kernel is contained in the radical of $\HtK$. This implies that $\{ L(\lambda) \, | \, \lambda \in \LK \}$ is also a complete set of non-isomorphic simple modules for $\HtK$ and the block partition of $\HtK$ corresponds to a partition of the set $\LK$. In particular, the space $L(\lambda)$ is both a simple $\HK$ and $\HtK$-module. However when we wish to consider $L(\lambda)$ as a $\HtK$-module we will denote it by $\tilde{L}(\lambda)$.

\begin{prop}
The Calogero-Moser partition $\CMK$ of $\LK$ and the block partition of $\HtK$ on $\LK$ are equal because the blocks of $\HtK$ are the preimages of the blocks of $\HK$ under the natural map $\HtK \twoheadrightarrow \HK$.
\end{prop}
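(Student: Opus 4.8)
The plan is to establish the Proposition by showing directly that $\HtK$ and $\HK$ determine the \emph{same} partition of $\LK$, the mechanism being that each block of $\HtK$ has exactly the same simple constituents as a block of $\HK$, i.e. is its preimage under $\phi \colon \HtK \twoheadrightarrow \HK$. Recall from the previous paragraph that $I := \ker \phi$ is generated by central nilpotent elements, so $I \subseteq \rad \HtK$, and that restriction of scalars along $\phi$ identifies the simple modules of the two algebras, $\tilde{L}(\lambda)$ being $L(\lambda)$ regarded as an $\HtK$-module. I will use the standard fact that, for a finite-dimensional $\C$-algebra, the blocks are precisely the connected components of its $\mathrm{Ext}$-quiver; that is, two non-isomorphic simples lie in the same block if and only if they can be joined by a chain of simple modules in which consecutive members are connected by a non-zero $\mathrm{Ext}^1$ in one direction or the other. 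It therefore suffices to prove that, for $\lambda \neq \mu$, one has $\mathrm{Ext}^1_{\HtK}(L(\lambda),L(\mu)) \neq 0$ if and only if $\mathrm{Ext}^1_{\HK}(L(\lambda),L(\mu)) \neq 0$.

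One implication is formal and does not use the special nature of $I$: since the $\HtK$-action on every simple factors through $\phi$, restriction of scalars carries a non-split extension of $\HK$-modules to a non-split extension of $\HtK$-modules. Hence any two simples linked over $\HK$ are already linked over $\HtK$, so a priori the partition attached to $\HtK$ is coarser than $\CMK$; equivalently, each $\HtK$-block is a union of $\CMK$-classes. This gives ``half'' of the Proposition.

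The crux is the reverse implication, and this is the only point where I expect real work, and where the hypothesis that $I$ is generated by \emph{central} elements is indispensable. The argument I would run is as follows. Suppose $\mathrm{Ext}^1_{\HtK}(L(\lambda),L(\mu)) \neq 0$ with $\lambda \neq \mu$, and choose a non-split extension $0 \to L(\mu) \to E \to L(\lambda) \to 0$ of $\HtK$-modules; then $E$ is indecomposable with $\rad E = L(\mu)$. For each central generator $z$ of $I$, multiplication by $z$ is an $\HtK$-module endomorphism of $E$ (because $z$ is central), its image lies in $(\rad \HtK)\,E = \rad E = L(\mu)$, and it annihilates the submodule $L(\mu)$ (since $z \in I$ kills every simple). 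It therefore factors through an $\HtK$-linear map $L(\lambda) = E / L(\mu) \to L(\mu)$, which vanishes by Schur's lemma as $\lambda \neq \mu$. Hence $zE = 0$ for every central generator $z$, so $IE = 0$, and $E$ is in fact an $\HK$-module; the given sequence is then a non-split extension of $\HK$-modules, so $\mathrm{Ext}^1_{\HK}(L(\lambda),L(\mu)) \neq 0$.

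Combining the two implications, $\HtK$ and $\HK$ have the same $\mathrm{Ext}$-quiver and hence the same connected components, so they induce identical partitions of $\LK$; concretely, the block of $\HtK$ containing $\tilde{L}(\lambda)$ consists of exactly the same simples as the block of $\HK$ containing $L(\lambda)$, which is the assertion that the blocks of $\HtK$ are the preimages under $\phi$ of the blocks of $\HK$. Since $\CMK$ is by definition the block partition of $\HK$, this yields the Proposition. I note that centrality of the generators of $I$ cannot be dropped: for a kernel that is contained in the radical but not generated by central elements the endomorphism-of-$E$ step breaks down and blocks may genuinely merge, as one already sees from the semisimple quotient of the path algebra of type $A_2$.
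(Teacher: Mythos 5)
Your proof is correct, but it takes a genuinely different route from the paper. The paper works entirely at the level of central idempotents: writing $A=\C[\h]^W\otimes\C[\h^*]^W\subset B=\C[\h]^K\otimes\C[\h^*]^K\subset Z:=Z(H_{\mbf{c}}(K))\subset H:=H_{\mbf{c}}(K)$, it invokes M\"uller's theorem twice to identify the primitive central idempotents of $H/A_+H=\HtK$ and of $H/B_+H=\HK$ with those of $Z/A_+Z$ and $Z/B_+Z$ respectively, and then observes that $B_+Z/A_+Z$ is a nilpotent ideal of the commutative ring $Z/A_+Z$, so idempotents lift; the centrality hypothesis enters through the fact that $B\subset Z$. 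You instead argue on the $\mathrm{Ext}$-quiver: the formal direction (restriction along $\HtK\twoheadrightarrow\HK$ preserves non-split extensions, so $\HtK$-blocks are unions of $\HK$-blocks) is standard, and your key step is the Schur's lemma argument showing that a central element $z$ of the kernel acts on any length-two indecomposable $E$ with non-isomorphic composition factors through a map $L(\lambda)\to L(\mu)$, hence by zero, so $E$ descends to $\HK$; this is where centrality is indispensable, as your $A_2$ path-algebra remark correctly illustrates. Both arguments are sound and use the same input from the surrounding text (the kernel is generated by central nilpotent elements, itself a consequence of the Lemma on invariants). What the paper's approach buys is the statement directly in the form given in the Proposition (a bijection of primitive central idempotents) by citing existing machinery; what yours buys is a self-contained, elementary proof valid for any surjection of finite-dimensional algebras whose kernel is generated by central nilpotent elements, at the cost of proving the "preimage of blocks" assertion only through its equivalent reformulation as equality of the two partitions of $\LK$. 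Either is acceptable here, since the Calogero--Moser partition is precisely a partition of $\LK$.
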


\begin{proof}
Let us again denote by $A$ the algebra $\C[\h]^W \otimes \C[\h^*]^W$ and define $B = \C[\h]^K \otimes \C[\h^*]^K$. Then we have inclusions $A \subset B \subset Z(H_\mbf{c}(K)) \subset H_\mbf{c}(K)$. The Proposition will follow from an application of a result of B. M\"uller; the version which we use here is stated in \cite[Proposition 2.7]{Ramifications}. Recall that $A_+$ is the maximal ideal of elements with constant term zero in $A$. Let $B_+$ be the maximal ideal of elements with constant term zero in $B$. Fix $Z := Z(H_\mbf{c}(K))$ and $H := H_\mbf{c}(K)$. M\"uller's Theorem says that the primitive central idempotents of $H / A_{+} \cdot H$ are the images of the primitive idempotents of $Z / A_{+} \cdot Z$, and similarly the primitive central idempotents of $H / B_{+} \cdot H$ are the images of the primitive idempotents of $Z / B_{+} \cdot Z$. However $A_+ \cdot Z \subset B_+ \cdot Z$ and $B_+ \cdot Z / A_+ \cdot Z$ is a nilpotent ideal in $Z / A_+ \cdot Z$; therefore the primitive idempotents of $Z / B_+ \cdot Z$ are the images of the primitive idempotents of $Z  / A_+ \cdot Z$. This implies that the primitive central idempotents of $H / B_+ \cdot H$ are the images of the primitive central idempotents of $H  / A_+ \cdot H$. This is equivalent to the statement of the Proposition.
\end{proof}

\subsection{}\label{lem:algpoly} The following lemma will be required later. 

\begin{lem}
We can choose a set $\{ f_1, \ds, f_n \}$ of homogeneous, algebraically independent generators of $\C[\mathfrak{h}]^K$ and positive integers $a_1, \ds, a_n$ such that $\{ f_1^{a_1}, \ds, f_n^{a_n} \}$ is a set of homogeneous, algebraically independent generators of $\C[\mathfrak{h}]^W$ and $a_1 \cdots a_n = d$.
\end{lem}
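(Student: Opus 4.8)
The plan is to reduce the statement to a fact about a cyclic group acting diagonally on a polynomial ring, and then to exploit the Chevalley--Shephard--Todd theorem in \emph{both} directions. Since $K$ and $W$ are complex reflection groups acting on $\h$, that theorem guarantees that $\C[\h]^K$ and $\C[\h]^W$ are polynomial rings on $n$ homogeneous generators. Because $K$ is normal in $W$, any lift to $W$ of a generator $\sigma$ of $W/K \cong C_d$ normalises $K$ and hence acts by a graded algebra automorphism on $\C[\h]^K$; two lifts differing by an element of $K$ induce the same automorphism, so $\sigma$ acts on $\C[\h]^K$, and $\C[\h]^W = (\C[\h]^K)^{\langle \sigma \rangle}$.

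First I would replace an arbitrary set of generators by homogeneous eigenvectors for $\sigma$. Each graded piece $(\C[\h]^K)_m$ is a finite-dimensional $\langle\sigma\rangle$-module, and the subspace of decomposables (the degree-$m$ part of the square of the augmentation ideal) is $\sigma$-stable; since $\C$ has characteristic $0$ and $\langle\sigma\rangle$ is abelian, I can choose in each degree an eigenbasis of a $\sigma$-stable complement to the decomposables. Lifting these gives homogeneous, algebraically independent generators $f_1, \ds, f_n$ of $\C[\h]^K$ with $\sigma(f_i) = \zeta^{b_i} f_i$, where $\zeta = e^{2\pi i/d}$. As $\langle\sigma\rangle$ is cyclic, the $f_i$ simultaneously diagonalise the whole group $G$, the image of $\langle\sigma\rangle$ in $GL(V)$ where $V = \bigoplus_i \C f_i$. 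A short Galois-theoretic remark --- that the elements of $W$ fixing $\C(\h)^K$ pointwise are exactly those of $K$ --- shows that $W/K$ acts faithfully on $\C(\h)^K \supset \C[\h]^K$, so $|G| = d$.

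Now $\C[\h]^W = \mathrm{Sym}(V)^G$ is polynomial, so the \emph{converse} half of Chevalley--Shephard--Todd forces $G$ to be generated by its pseudo-reflections. Since $G$ is diagonal in the basis $f_1, \ds, f_n$, a pseudo-reflection in $G$ is a diagonal element with exactly one nontrivial eigenvalue, i.e. it scales a single $f_i$. The generators of $G$ therefore each lie in one coordinate subgroup, whence $G = \prod_{i=1}^n G_i$, where $G_i \cong \mu_{a_i}$ acts by scaling $f_i$ alone. Taking invariants of these independent scalings yields $\C[\h]^W = \C[f_1^{a_1}, \ds, f_n^{a_n}]$, and $a_1 \cdots a_n = |G| = d$; the $f_i^{a_i}$ are automatically homogeneous and algebraically independent. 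The main obstacle is precisely this last structural step: one must know that the polynomiality of $\C[\h]^W$ (rather than merely of $\C[\h]^K$) feeds back, through the converse of Chevalley--Shephard--Todd, into the diagonal product decomposition of $G$. Without it a diagonal cyclic action need not have its invariants generated by pure powers, as the scalar action of $C_2 = \{\pm 1\}$ on $\C^2$ already shows, where $\C[x^2, xy, y^2]$ is neither polynomial nor generated by $x^2$ and $y^2$.
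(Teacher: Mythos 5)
Your argument is correct and is essentially the paper's own proof: both choose a homogeneous, $C_d$-stable space of generators complementing $\mf{m}^2$ in $\mf{m} = \C[\h]^K_+$, diagonalise the cyclic action on it, and invoke the converse of Chevalley--Shephard--Todd on the polynomiality of $\C[\h]^W = (\C[\h]^K)^{C_d}$ to force the diagonal group to be a product of coordinate scalings $C_{a_1} \times \cdots \times C_{a_n}$, whence the invariants are $\C[f_1^{a_1}, \ds, f_n^{a_n}]$. You merely make explicit two points the paper leaves implicit (the faithfulness of the $W/K$-action giving $a_1 \cdots a_n = d$, and the structure of a diagonal reflection group), which is fine.
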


\begin{proof}
The ring $\C[\h]^K$ is $\N$-graded with $(\C[\h]^K)_0 = \C$. Therefore $\mf{m} := \C[\h]^K_+$, the ideal of polynomials with zero constant terms, is the unique maximal, graded ideal of $\C[\h]^K$. The group $W$ acts on $\mf{m}$ and hence also on $\mf{m}^2$. Let $U$ be a homogeneous, $W$-stable complement to $\mf{m}^2$ in $\mf{m}$. By \cite[Lemme 2.1]{BessisBonnafeRouquier}, $U$ generates $\C[\h]^K$ and so $\C[\h]^K = \C[U^*]$. The action of $W$ on $U^*$ factors through $C_d$. Since $\C[U^*]^{C_d} = \C[\mathfrak{h}]^W$ is a polynomial ring, the Chevalley-Shephard-Todd Theorem, \cite[Theorem 1.2]{CohenReflections}, says that $C_d$ acts on $U^*$ as a complex reflection group. Therefore we can decompose $U$ into a direct sum of one-dimensional, homogeneous $C_d$-modules, $U = \oplus_{i = 1}^n \C \cdot f_i$, and $C_d = C_{a_1} \times \cdots \times C_{a_n}$ such that the action of $C_d$ on $\C \cdot f_i$ factors through $C_{a_i}$ (with $C_{a_i}$ acting faithfully on $\C \cdot f_i$). Then $\C[\h]^W = \C[f_1^{a_1}, \ds, f_n^{a_n}]$ and the fact that $\C[\h]^W$ is a polynomial ring in $n$ variables means that the polynomials $f_1^{a_1}, \ds, f_n^{a_n}$ are algebraically independent. 
\end{proof} 

\begin{remark}  For $W = G(m,1,n)$ and $K = G(m,d,n)$ (as defined in Section \ref{sec:example}) we can make an explicit choice of invariant polynomials as described in Lemma \ref{lem:algpoly}. Let $e_i(x_1, \dots, x_n)$ denote the $i^{th}$ elementary symmetric polynomial in $x_1, \dots , x_n$. By \cite[page 387]{CohenReflections}, the following are a choice of algebraically independent, homogeneous generators for $\C [\mathfrak{h}]^W$:
\begin{displaymath}
e_i(x_1^m, \dots , x_n^m), \quad 1 \le i < n \qquad \textrm{and} \qquad (x_1 \dots x_n)^{mn}.
\end{displaymath}
In Lemma \ref{lem:algpoly} we take $f_n$ to be $(x_1 \dots x_n)^{\frac{nm}{d}}$ and $f_i = e_i(x_1^m, \dots , x_n^m)$ for $1 \le i < n$ so that $a_i = 1$ for $1 \le i < n$ and $a_n = d$.  
\end{remark}

\section{Automorphisms of rational Cherednik algebras}

\subsection{} The group $W$ is a finite subgroup of $GL(\h)$. Let us choose an element $\sigma \in N_{GL(\h)}(W) \subset GL(\h)$. Then $\sigma$ is an automorphism of $W$ and we can regard it as an algebra automorphism of $\C W$ by making $\sigma$ act trivially on $\C$. Moreover $\sigma$ acts naturally on $\h^*$ as $(\sigma \cdot x)(y) = x(\sigma^{-1} \cdot y)$ for $x \in \h^*$ and $y \in \h$. Therefore $\sigma$ also acts on $\C[\h^*]$ and $\C[\h]$. Let us explicitly write $\mc{S}(W) = \{ C_1, \ds , C_k \}$ for the set of conjugacy classes of reflections in $W$. Then $\sigma$ permutes the $C_i$'s and regarding $\sigma$ as an element of the symmetric group $S_k$ we write $\sigma \cdot C_i = C_{\sigma(i)}$. It can be checked from the defining relations (\ref{eq:rel}) that the maps
\bdm
x \mapsto \sigma(x), \qquad y \mapsto \sigma(y), \qquad w \mapsto \sigma(w), \qquad x \in \h^*, y \in \h, w \in W
\edm
define an algebra isomorphism
\bdm
\sigma \, : \, H_{t, \mbf{c}}(W) \stackrel{\sim}{\longrightarrow} H_{t,\sigma(\mbf{c})}(W),
\edm 
where $\sigma(\mbf{c}) = \sigma(c_1 , \ds , c_k) = (c_{\sigma^{-1}(1)}, \ds , c_{\sigma^{-1}(k)})$. Since $\sigma$ normalizes $W$, there is a well defined action of $\sigma$ on $\C[\h]^W \otimes \C[\h^*]^W$. Hence $\sigma$ descends to an isomorphism $\sigma \, : \, \HW \stackrel{\sim}{\rightarrow} \bar{H}_{\sigma(\mbf{c})}(W)$.

\subsection{} Now let us consider $K$. By definition $W \subset N_{GL(\h)}(K)$, therefore elements of $W$ act as isomorphisms between the various rational Cherednik algebras associated to $K$. Moreover, if we once again make the assumption that the parameter $\mbf{c}$ is $W$-equivariant then the elements of $W$ actually define automorphisms of $H_{t,\mbf{c}}(K)$. These induce automorphisms of $\HK$ and $\HtK$. Let $M$ be a module for one of the three algebras $\C K,\HK$ or $\HtK$. Then ${}^\sigma M$ is also a module for that algebra, where $M = {}^\sigma M$ as vector spaces and if $a$ is an element of the algebra and $m \in M$, then $a \cdot_{\sigma} m = \sigma^{-1}(a) \cdot m$. The following lemma is standard.

\begin{lem}
Let $\lambda$ be a $K$-module and $\sigma \in W$. Then ${}^\sigma L(\lambda) \cong L({}^\sigma \lambda)$ and ${}^\sigma \tilde{L}(\lambda) \cong \tilde{L}({}^\sigma \lambda)$.
\end{lem}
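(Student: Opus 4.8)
The plan is to unwind the definitions of $L(\lambda)$ and the twisted module ${}^\sigma M$ and check that $\sigma$ intertwines the relevant constructions. Recall that $L(\lambda)$ is the simple head of the standard (baby Verma) module $\Delta(\lambda) = \HK \otimes_{\C[\h^*]^{coK} \rtimes K} \lambda$, where $\lambda$ is inflated to $\C[\h^*]^{coK}\rtimes K$ by letting $\C[\h^*]^{coK}_+$ act as zero. Since $\sigma\in W$ normalizes $K$ and acts as an automorphism of $\HK$ (as established in the previous subsection), it also preserves the subalgebra $\C[\h^*]^{coK}\rtimes K$ and hence sends $\Delta(\lambda)$ to a module of the same shape.

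First I would make precise what ${}^\sigma \Delta(\lambda)$ is: for the automorphism $\sigma$, the twist ${}^\sigma M$ has the same underlying space with action $a\cdot_\sigma m = \sigma^{-1}(a)\cdot m$. Applying this to the generator $1\otimes v$ of $\Delta(\lambda)$, one computes that $\C[\h^*]^{coK}_+$ still annihilates it (because $\sigma$ permutes these generators), and the residual $K$-action on the lowest weight space is precisely $k\cdot_\sigma v = \sigma^{-1}(k)\cdot v$, which is exactly the module ${}^\sigma\lambda$ by definition. Thus ${}^\sigma\Delta(\lambda)\cong\Delta({}^\sigma\lambda)$ as $\HK$-modules, the isomorphism being the identity on underlying vector spaces, and functoriality of the twist. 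Next, since twisting by an algebra automorphism is an exact, invertible functor, it sends simple modules to simple modules and commutes with taking the head; therefore ${}^\sigma L(\lambda)\cong L({}^\sigma\lambda)$.

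The identical argument applies verbatim to $\HtK$: the automorphism $\sigma$ of $\HtK$ from the previous subsection preserves the corresponding triangular subalgebra, so $\tilde{\Delta}(\lambda)$ twists to $\tilde{\Delta}({}^\sigma\lambda)$, and passing to heads gives ${}^\sigma\tilde{L}(\lambda)\cong\tilde{L}({}^\sigma\lambda)$. Here one uses that the natural surjection $\HtK\twoheadrightarrow\HK$, whose kernel lies in the radical (as recorded in the setup of Proposition~\ref{thm:lifting}), is $\sigma$-equivariant, so the two statements are compatible and the simple $\HtK$-modules are indeed the $\tilde{L}(\lambda)$.

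The only genuinely substantive point — and the step I would watch most carefully — is verifying that $\sigma$ really does preserve the triangular (standard-module) structure, i.e. that it maps the parabolic subalgebra $\C[\h^*]^{coK}\rtimes K$ to itself and acts on the lowest weight space as $\sigma$ acts on $K$-modules. This is where the hypothesis $\sigma\in W\subset N_{GL(\h)}(K)$ together with the $W$-equivariance of $\mbf{c}$ is essential: it is exactly what guarantees $\sigma$ is an automorphism (rather than merely an isomorphism to a different-parameter algebra) preserving the $\Z$-grading and the group-algebra summand. Everything else is the formal and standard fact that twisting by an automorphism is an autoequivalence of the module category preserving simplicity — which is why the lemma is asserted to be standard.
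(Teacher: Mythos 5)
Your argument is correct and is precisely the standard one the paper has in mind: the paper states this lemma without proof (``The following lemma is standard''), and your verification that $\sigma$ preserves the triangular subalgebra $\C[\h^*]^{co K}\rtimes K$ (resp.\ $\C[\h^*]^{co W}\rtimes K$ for $\HtK$), that ${}^\sigma\Delta(\lambda)\cong\Delta({}^\sigma\lambda)$, and that twisting by an automorphism is an exact autoequivalence commuting with taking heads is exactly the intended justification. You also correctly identify the one substantive hypothesis --- that $W$-equivariance of $\mbf{c}$ makes $\sigma$ an automorphism rather than an isomorphism onto a different-parameter algebra --- so there is nothing to add.
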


\subsection{Clifford theory}\label{sec:Clifford} We now define an action of $C_d$ on $\HtK$. For $\eta \in C_d$, choose a lift $\sigma$ of $\eta$ in $W$ and let $\lambda \in \LK$. Define 
\bdm
\eta \cdot \lambda = {}^\sigma \lambda, \quad \eta \cdot \tilde{L}(\lambda) = {}^\sigma \tilde{L}(\lambda).
\edm
Note that the action of $C_d$ is only well-defined up to isomorphism, therefore $C_d$ can be considered as acting on the isomorphism classes of the objects in $\HtK \mmod$. Given $\mu \in \LK$, the stabilizer subgroup of $C_d$ with respect to $\mu$ will be denoted $C_\mu$. Let $C_d^\vee = \Hom_{\textrm{gp}} (C_d, \C^*)$ be the group of characters of $C_d$. There is an action of $C_d^\vee$ on the isomorphism classes of the objects in $\HW \mmod$. First let us define an action of $C_d^\vee$ on $\LW$: $\delta \cdot \lambda = \lambda \otimes  \delta$, for $\delta \in C^\vee_d$ and $\lambda \in \LW$. The stabilizer subgroup of $C_d^\vee$ with respect to $\lambda$ will be denoted $C^\vee_\lambda$. We choose coset representatives $w_1, \ds , w_d$ of $C_d$ in $W$, then Lemma \ref{lem:algpoly} implies that $\HW = \bigoplus_{i} \HtK w_i$. Given a $\HW$-module $M$ we define $\delta \cdot M = M \otimes \delta$ with action
\bdm
h w_i \cdot( m \otimes \delta) = \delta(K w_i) (h w_i \cdot m) \otimes \delta.
\edm
This action does not depend on the choice of coset representatives and one can define $\delta$ as a functor on $\HW \mmod$, though we will not require this level of generality. 

\subsection{}\label{prop:Clifford} Let $\Res^W_K$ and $\Ind_K^W$ be the induction and restriction functors $\C K \mmod \leftrightarrows \C W \mmod$. Then Clifford's Theorem allows one to compare $\C K$ and $\C W$-modules via the induction and restriction functors, see \cite[Chapter 7]{CR} for details. When the quotient group is cyclic it is possible to deduce the following result (the proof of which can be found in \cite[Proposition 6.1]{7}). 

\begin{prop}
Fix $\lambda \in \LW$ and write $\Res_K^W \lambda = \mu_1 \oplus \ds \oplus \mu_k$, where each $\mu_i$ is nonzero and irreducible. Then
\begin{enumerate}
\item $C_{\mu_i} = (C^\vee_d / C_\lambda^\vee)^\vee \subset C_d$, hence $|C_{\mu_i}| \cdot |C^\vee_\lambda| = d$,
\item $C_d$ acts transitively on the set $\{ \mu_1, \ds , \mu_k \}$,
\item the $\mu_i$ are pairwise non-isomorphic,
\item $\Ind_K^W \mu_i = \bigoplus_{\delta \in C^\vee_d / C_\lambda^\vee} \delta \cdot \lambda$.
\end{enumerate}
\end{prop}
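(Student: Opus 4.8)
The plan is to prove this as a standard application of Clifford theory for the cyclic extension $K \triangleleft W$ with $W/K \cong C_d$, treating each of the four claims in turn. The whole argument rests on Clifford's Theorem, which tells us that $\Res^W_K \lambda$ is a direct sum of $W$-conjugates of a single irreducible $K$-module, each appearing with the same multiplicity, and that this multiplicity is controlled by the inertia group. Because the quotient $C_d$ is cyclic (in particular abelian), I expect the multiplicities to all be $1$ and the combinatorics to collapse to the clean duality statement in (1).

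\textbf{Step 1 (claims (2) and (3)).} First I would invoke Clifford's Theorem directly: the $W$-action on $\mathsf{Irr}(K)$ permutes the simple constituents of $\Res^W_K \lambda$ transitively, and since $W$ acts through $W/K \cong C_d$ (as $K$ acts trivially on its own isomorphism classes by conjugation), this transitive action is exactly the $C_d$-action defined in (\ref{sec:Clifford}). This gives claim (2) immediately. For claim (3), I would appeal to the fact that for a cyclic (hence abelian) quotient the restriction is \emph{multiplicity-free}: the standard argument is that $\mathrm{End}_W(\Ind_K^W \mu_i) \cong \bigoplus_{\delta} \mathbb{C}$ is commutative because the relevant twisted group algebra over the abelian inertia quotient splits, forcing each $\mu_i$ to occur once. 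So the $\mu_i$ are pairwise non-isomorphic.

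\textbf{Step 2 (claim (1)).} Here I would set up the two dual stabilizers carefully. The character group $C_d^\vee$ acts on $\mathsf{Irr}(W)$ by tensoring, and $C_\lambda^\vee \le C_d^\vee$ is the stabilizer of $\lambda$; by Clifford theory $\delta \cdot \lambda \cong \lambda$ exactly when $\delta$ is trivial on the subgroup of $C_d$ through which $\lambda$'s constituents are permuted. Dually, $C_{\mu_i} \le C_d$ is the inertia group of $\mu_i$. The key computation is the Frobenius-reciprocity identity $\langle \mathrm{Res}^W_K \lambda, \mathrm{Res}^W_K(\lambda \otimes \delta)\rangle_K = \langle \lambda, \lambda \otimes \delta \rangle_W \cdot |\text{something}|$, which shows that $\delta$ fixes $\lambda$ iff $\delta$ is trivial on $C_{\mu_i}$, i.e. $C_\lambda^\vee = (C_d/C_{\mu_i})^\vee$, equivalently $C_{\mu_i} = (C_d^\vee / C_\lambda^\vee)^\vee$. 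The index count $|C_{\mu_i}| \cdot |C_\lambda^\vee| = d$ then follows since $|C_d^\vee| = d$ and $|(C_d^\vee/C_\lambda^\vee)^\vee| = |C_d^\vee/C_\lambda^\vee| = d/|C_\lambda^\vee|$.

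\textbf{Step 3 (claim (4)).} Finally, Frobenius reciprocity plus claim (2) gives $\Ind_K^W \mu_i \cong \bigoplus_j n_j \,\lambda_j$ where the $\lambda_j$ are those irreducibles of $W$ whose restriction contains $\mu_i$. Since restriction is multiplicity-free and transitivity pins down the orbit, the constituents are precisely $\{\delta \cdot \lambda : \delta \in C_d^\vee/C_\lambda^\vee\}$, each occurring once, yielding the stated formula. \textbf{The main obstacle} I anticipate is bookkeeping the two \emph{dual} group actions — the $C_d$-action on $\mathsf{Irr}(K)$ versus the $C_d^\vee$-action on $\mathsf{Irr}(W)$ — and verifying that the inertia subgroup of $\mu_i$ and the tensor-stabilizer of $\lambda$ are genuinely Pontryagin-dual to one another; getting the direction of this duality correct, rather than the hands-on verification of multiplicity-freeness, is where the care is needed. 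Since the paper cites \cite[Proposition 6.1]{7} for the full argument, I would lean on that reference for the duality identity rather than reprove it from scratch.
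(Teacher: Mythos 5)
The paper offers no proof of this Proposition at all: it is stated as a known consequence of Clifford theory for cyclic quotients and the reader is referred to Stembridge \cite[Proposition 6.1]{7}, so there is no in-paper argument to compare against; your sketch is the standard Clifford-theoretic proof and is essentially correct, and it matches the route the cited reference takes (transitivity from Clifford's theorem, multiplicity-freeness from the triviality of the relevant cocycle, Pontryagin duality between the inertia group of $\mu_i$ and the tensor-stabilizer of $\lambda$, then Frobenius reciprocity for (4)). One small point of precision: in Step 1 you attribute the splitting of the twisted group algebra (hence multiplicity-freeness) to the inertia quotient being \emph{abelian}, but the hypothesis that actually does the work is that it is \emph{cyclic}, so that $H^2(C_e,\C^\times)=0$; for a merely abelian quotient such as $C_2\times C_2$ the Schur multiplier is nontrivial and restriction need not be multiplicity-free. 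Since here $W/K\cong C_d$ and every subgroup of $C_d$ is cyclic, your argument goes through as written.
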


\subsection{} To relate the action of $C_d$ on $\HtK \mmod$ and $C_d^\vee$ on $\HW \mmod$ let us introduce the semisimple algebras 
\bdm
A_W := \HW / \rad \HW \qquad \textrm{ and } \qquad A_K := \HtK / \rad \HtK.
\edm
Note that $A_K \subset A_W$ and there are natural induction and restriction functors, $\Ind_{A_K}^{A_W}$ and $\Res_{A_K}^{A_W}$. The functors 
\bdm
E_W \, : \, \C W \mmod \rightarrow A_W \mmod, \qquad E_W(\lambda) := A_W \otimes_{\HW} \HW \otimes_{\C[\h^*]^{co W} \rtimes W} \lambda
\edm
\bdm
E_K \, : \, \C K \mmod \rightarrow A_K \mmod, \qquad E_K(\mu) := A_K \otimes_{\HtK} \HtK \otimes_{\C[\h^*]^{co W} \rtimes K} \mu
\edm
are equivalences of categories with $E_W(\lambda) = L(\lambda)$ and $E_K(\mu) = \tilde{L}(\mu)$ for $\lambda \in \LW$ and $\mu \in \LK$. 

\begin{lem}
The following diagram commutes up to natural equivalences.
\beq\label{eq:commute}
\vcenter{
\xymatrix{ \C W \mmod  \ar[r]^{ E_W } \ar[d]^{\Res_K^W} & A_W \mmod \ar[d]^{\Res_{A_K}^{A_W}} \\
\C K \mmod \ar[r]_{E_K} \ar[u]^{\Ind_K^W} & A_K \mmod \ar[u]^{\Ind_{A_K}^{A_W}} }
}
\eeq
\end{lem}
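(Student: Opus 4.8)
The plan is to unwind the definitions of $E_W$ and $E_K$ so that both squares become chains of associativity isomorphisms for tensor products, and then to obtain the restriction square formally from the induction square by uniqueness of adjoints. Write $B_W = \C[\h^*]^{co W}\rtimes W$ and $B_K = \C[\h^*]^{co W}\rtimes K$, and for a $\C W$-module $\lambda$ (resp. a $\C K$-module $\mu$) let $\bar\lambda$ (resp. $\bar\mu$) denote its inflation to $B_W$ (resp. $B_K$) along the augmentation killing $\C[\h^*]^{co W}_+$. The first step is to observe that $A_W \otimes_{\HW}\HW\cong A_W$ as $(A_W,\HW)$-bimodules, so that
\[
E_W(\lambda)\cong A_W\otimes_{B_W}\bar\lambda,\qquad E_K(\mu)\cong A_K\otimes_{B_K}\bar\mu,
\]
where $A_W$ is regarded as an $(A_W,B_W)$-bimodule via $B_W\hookrightarrow\HW\twoheadrightarrow A_W$, and similarly for $A_K$.

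Second, I would establish the induction square $E_W\circ\Ind_K^W\cong\Ind_{A_K}^{A_W}\circ E_K$ directly. The key auxiliary fact is that inflation commutes with induction: choosing the coset representatives $w_1,\ds,w_d$ as in (\ref{sec:Clifford}), the $W$-stability of $\C[\h^*]^{co W}$ gives $B_W=\bigoplus_i w_i B_K$ as a free right $B_K$-module, and the map $B_W\otimes_{B_K}\bar\mu\to\overline{\C W\otimes_{\C K}\mu}$, $b\otimes m\mapsto\bar b\otimes m$, is an isomorphism of $B_W$-modules (a surjection between spaces of equal dimension $d\cdot\dim\mu$). Granting this, the induction square is the composite
\[
E_W(\Ind_K^W\mu)\cong A_W\otimes_{B_W}\big(B_W\otimes_{B_K}\bar\mu\big)\cong A_W\otimes_{B_K}\bar\mu\cong A_W\otimes_{A_K}\big(A_K\otimes_{B_K}\bar\mu\big)\cong\Ind_{A_K}^{A_W}(E_K\mu),
\]
each isomorphism being an instance of associativity of the tensor product, and all natural in $\mu$. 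The point that genuinely needs checking is that the middle term $A_W\otimes_{B_K}\bar\mu$ is unambiguous, i.e. that the right $B_K$-action on $A_W$ coming from $B_K\hookrightarrow B_W\hookrightarrow\HW\twoheadrightarrow A_W$ agrees with the one coming from $B_K\hookrightarrow\HtK\twoheadrightarrow A_K\hookrightarrow A_W$; this follows from the compatibility of the inclusions $\HtK\hookrightarrow\HW$ and $A_K\hookrightarrow A_W$ noted above.

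Finally, the restriction square follows formally. Since $E_W$ and $E_K$ are equivalences, the induction square rewrites as $\Ind_{A_K}^{A_W}\cong E_W\circ\Ind_K^W\circ E_K^{-1}$. Taking right adjoints and using that $\Res_K^W$ (resp. $\Res_{A_K}^{A_W}$) is right adjoint to $\Ind_K^W$ (resp. $\Ind_{A_K}^{A_W}$), that the right adjoint of an equivalence is its quasi-inverse, and that adjoints are unique up to natural isomorphism, we obtain $\Res_{A_K}^{A_W}\cong E_K\circ\Res_K^W\circ E_W^{-1}$, which is precisely the restriction square. I expect the main obstacle to be the bookkeeping in the second step, namely the inflation–induction compatibility together with the coincidence of the two $B_K$-module structures on $A_W$, rather than the formal adjunction argument, which is routine once the induction square is in hand.
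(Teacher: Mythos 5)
Your proof is correct, but the key step is carried out by a genuinely different method from the paper's. The paper establishes the induction square by evaluating both composites on the bimodule $\C K$: using $\C W = \bigoplus_i \lambda_i\otimes\lambda_i^*$, Frobenius reciprocity, and the isomorphism $\HW\otimes_{\HtK}\tilde{\Delta}(\mu_j)\cong\Delta(\Ind_K^W\mu_j)$, it shows that $E_W\circ\Ind_K^W(\C K)$ and $\Ind_{A_K}^{A_W}\circ E_K(\C K)$ are both isomorphic to $\bigoplus_{ij}L(\lambda_i)\otimes(\mu_j^*)^{\oplus a_{ij}}$ as $A_W$-$\C K$-bimodules, where the $a_{ij}$ are the restriction multiplicities, and then invokes Watts' theorem to upgrade a choice of bimodule isomorphism to a natural isomorphism of the exact functors. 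You instead construct the natural isomorphism directly: rewriting $E_W$ and $E_K$ as $A_W\otimes_{B_W}(-)$ and $A_K\otimes_{B_K}(-)$ after inflation, proving the inflation--induction compatibility $B_W\otimes_{B_K}\bar{\mu}\cong\overline{\C W\otimes_{\C K}\mu}$ by exhibiting a surjection between spaces of dimension $d\cdot\dim\mu$, and collapsing both composites to $A_W\otimes_{B_K}\bar{\mu}$ by associativity. The point you isolate --- that the two right $B_K$-module structures on $A_W$ agree --- is exactly the right thing to check, and it holds because the inclusion $A_K\hookrightarrow A_W$ is by construction the map induced by $\HtK\hookrightarrow\HW$ on the semisimple quotients. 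Your route buys a canonical isomorphism and avoids both the decomposition into simples and the appeal to Watts' theorem, at the cost of the bookkeeping you acknowledge; the paper's route is quicker to write down but only produces an isomorphism after fixing a bimodule isomorphism. The final step, deducing the restriction square from the induction square by uniqueness of adjoints, is the same in both arguments.
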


\begin{proof}
Let us write $\LW = \{ \lambda_1 , \ds, \lambda_k \}$, $\LK = \{ \mu_1, \ds, \mu_l \}$ and $a_{ij} \in \N$ such that $\Res_{K}^{W} \lambda_i = \oplus_j \, \mu_j^{\oplus a_{ij}}$. We begin by showing that the functors $E_W \circ \Ind_K^W$ and $\Ind_{A_K}^{A_W} \circ E_K$ are equivalent. The fact that $\C W = \bigoplus_{i} \lambda_i \otimes \lambda_i^*$ as a $\C W$-$\C W$-bimodule implies that $E_W (\C W)  =\bigoplus_{i} L(\lambda_i) \otimes \lambda_i^*$ as a $A_W$-$\C W$-bimodule. Similarly, $E_K (\C K) = \bigoplus_{j} \tilde{L}(\mu_j) \otimes \mu_j^*$ as a $A_K$-$\C K$-bimodule. Frobenius reciprocity implies that 
\bdm
E_W \circ \Ind_K^W \C K \simeq \bigoplus_{ij} L(\lambda_i) \otimes (\mu_j^*)^{\oplus a_{ij}}
\edm
as a $A_W$-$\C K$-bimodule. The isomorphism $\HW \otimes_{\HtK} \tilde{\Delta}(\mu_j) \simeq \Delta( \Ind_K^W \mu_j)$ implies that 
\bdm
\Ind_{A_K}^{A_W} \tilde{L}(\mu_j) \simeq \bigoplus_i L(\lambda_i)^{\oplus a_{ij}},
\edm
and thus
\bdm
\Ind_{A_K}^{A_W} \circ E_K (\C K) \simeq \bigoplus_{ij} L(\lambda_i) \otimes (\mu_j^*)^{\oplus a_{ij}},
\edm
as a $A_W$-$\C K$-bimodule. Since the functors $E_W \circ \Ind_K^W$ and $\Ind_{A_K}^{A_W} \circ E_K$ are exact, Watts' Theorem (\cite[Theorem 5.45]{Rotman}) says that $E_W \circ \Ind_K^W$ is naturally isomorphic to $E_W \circ \Ind_K^W (\C K) \otimes_{\C K} -$ and $\Ind_{A_K}^{A_W} \circ E_K$ is naturally isomorphic to $\Ind_{A_K}^{A_W} \circ E_K (\C K) \otimes_{\C K} -$. The required equivalence now follows from the general fact that if $A_1$ and $A_2$ are algebras, $B,C$ isomorphic $A_1$-$A_2$-bimodules then fixing an isomorphism $B \rightarrow C$ defines an equivalence 
\bdm
B \otimes_{A_2} - \, \stackrel{\sim}{\longrightarrow} C \otimes_{A_2} -  \, : \, A_1 \mmod \longrightarrow A_2 \mmod.
\edm
The fact that the functors $E_K \circ \Res_{A_K}^{A_W}$ and $\Res_{A_K}^{A_W} \circ E_W$ are equivalent follows from the facts that $E_W \circ \Ind_K^W$ and $\Ind_{A_K}^{A_W} \circ E_K$ are equivalent, $(\Ind_K^W, \Res_K^W)$ and $(\Ind_{A_K}^{A_W},\Res_{A_K}^{A_W})$ are pairs of adjoint functors and that $E_K$ and $E_W$ are equivalences of categories.
\end{proof}

\subsection{}\label{lem:equiv} The functors $E_W$ and $E_K$ behave well with respect to the groups $C_d^\vee$ and $C_d$. More precisely:

\begin{lem}
Let $\delta \in C_d^\vee$, $g \in C_d$, $\lambda \in \C W \mmod$ and $\mu \in \C K \mmod$, then  
\bdm
E_W(\delta \cdot \lambda) \simeq \delta \cdot E_W(\lambda) \quad \textrm{ and } \quad E_K(g \cdot \mu) \simeq g \cdot E_K(\mu).
\edm
\end{lem}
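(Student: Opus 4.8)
The plan is to reduce both isomorphisms to the statement that each group action on the target category is, up to natural isomorphism, a twist by an algebra automorphism, and then to transport this through the baby Verma construction. Recall that $E_W(\lambda) = A_W \otimes_{\HW} \Delta(\lambda)$, where $\Delta(\lambda) = \HW \otimes_{\C[\h^*]^{co W} \rtimes W} \lambda$ is the baby Verma module; since $A_W = \HW / \rad \HW$, the module $E_W(\lambda) = \Delta(\lambda)/(\rad \HW)\Delta(\lambda)$ is the head of $\Delta(\lambda)$, namely $L(\lambda)$, and likewise $E_K(\mu)$ is the head $\tilde{L}(\mu)$ of $\tilde{\Delta}(\mu) = \HtK \otimes_{\C[\h^*]^{co W} \rtimes K} \mu$. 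Thus it suffices to prove the isomorphisms $\delta \cdot L(\lambda) \cong L(\lambda \otimes \delta)$ of $\HW$-modules and ${}^\sigma \tilde{L}(\mu) \cong \tilde{L}({}^\sigma \mu)$ of $\HtK$-modules.

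The second of these is immediate. By definition the $C_d$-action on $\HtK\mmod$ is $g \cdot \tilde{L}(\mu) = {}^\sigma \tilde{L}(\mu)$ for a lift $\sigma \in W$ of $g$, and the standard lemma recorded above (relating ${}^\sigma \tilde{L}(\mu)$ and $\tilde{L}({}^\sigma \mu)$) gives ${}^\sigma \tilde{L}(\mu) \cong \tilde{L}({}^\sigma \mu)$. Since $g \cdot \mu = {}^\sigma \mu$, we obtain $E_K(g \cdot \mu) = \tilde{L}({}^\sigma \mu) \cong {}^\sigma \tilde{L}(\mu) = g \cdot E_K(\mu)$, as required.

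For the first isomorphism I would begin by observing that the $C_d^\vee$-action on $\HW\mmod$ defined in \ref{sec:Clifford} is precisely twisting by the algebra automorphism $\tau_\delta$ of the $W/K \cong C_d$-graded algebra $\HW = \bigoplus_i \HtK w_i$ which scales the homogeneous component of degree $\overline{w}$ by $\delta(\overline{w})$. A direct check from the grading shows that $\tau_\delta$ is an algebra automorphism (this also renders manifest the independence of the coset representatives), that it fixes $\C[\h]^{co W}$ and $\C[\h^*]^{co W}$ pointwise, and that it sends $w \mapsto \delta(Kw)\, w$ for $w \in W$. In particular $\tau_\delta$ preserves the parabolic subalgebra $S = \C[\h^*]^{co W} \rtimes W$, and its restriction to $S$ fixes $\C[\h^*]^{co W}$ while twisting the $W$-action by the character $\delta$ pulled back along $W \twoheadrightarrow C_d$; the scalars match so that the twisted $S$-module obtained from $\lambda$ is exactly $\lambda \otimes \delta$. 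I would then invoke the standard compatibility of twisting with induction: for an automorphism preserving a subalgebra $S$, the twist of $\HW \otimes_S N$ is naturally isomorphic to the module induced from the twist of $N$. Applied to $N = \lambda$ this yields a natural isomorphism $\delta \cdot \Delta(\lambda) \cong \Delta(\lambda \otimes \delta)$ of $\HW$-modules.

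To conclude, twisting by an algebra automorphism is an exact auto-equivalence of $\HW\mmod$ that preserves radicals, hence commutes with passage to the head; therefore $\delta \cdot L(\lambda) = \delta \cdot \mathrm{head}\,\Delta(\lambda) \cong \mathrm{head}\,\Delta(\lambda \otimes \delta) = L(\lambda \otimes \delta)$, and since $\tau_\delta$ preserves $\rad \HW$ this descends to $A_W\mmod$, giving $E_W(\delta \cdot \lambda) \cong \delta \cdot E_W(\lambda)$. The verifications in the last two paragraphs are routine; the one point requiring genuine care — and the main obstacle — is the identification of the abstractly defined $C_d^\vee$-twist on $\HW\mmod$ with the concrete grading automorphism $\tau_\delta$ together with the computation of its restriction to $S$, since it is exactly this that converts the tensor twist $\lambda \otimes \delta$ on the reflection-group side into the grading twist on the Cherednik side.
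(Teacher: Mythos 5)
Your proof is correct, and its overall skeleton is the same as the paper's: both arguments reduce the claim to an isomorphism of baby Verma modules $\delta \cdot \Delta(\lambda) \cong \Delta(\delta\cdot\lambda)$ and then pass to heads using $E_W(\lambda)=L(\lambda)$. The difference lies in how that Verma isomorphism is produced. The paper argues by the universal property of $\Delta$: the subspace $1\otimes\lambda\otimes\delta$ of $\delta\cdot\Delta(\lambda)$ is killed by $\h\subset\C[\h^*]^{co W}$, giving a nonzero map $\Delta(\delta\cdot\lambda)\to\delta\cdot\Delta(\lambda)$, which is surjective since that subspace generates, hence an isomorphism by dimension count. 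You instead identify the $\delta$-twist with pullback along the graded algebra automorphism $\tau_\delta$ of $\HW=\bigoplus_i\HtK w_i$ that scales the component $\HtK w_i$ by $\delta(Kw_i)$, check that $\tau_\delta$ preserves the subalgebra $\C[\h^*]^{co W}\rtimes W$ and restricts there to the tensor twist by $\delta$, and then invoke the general compatibility of automorphism-twists with induction. Your route is more structural: it makes the independence of coset representatives and the exactness/radical-preservation used at the final step transparent, at the cost of the (routine but necessary) verification that $\tau_\delta$ is an automorphism matching the module-level definition of the $\delta$-action. For the $E_K$ half, where the paper only says the argument is similar, you take the legitimate shortcut of quoting the earlier standard lemma ${}^\sigma\tilde{L}(\mu)\cong\tilde{L}({}^\sigma\mu)$ directly. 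Both approaches are sound; no gaps.
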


\begin{proof}
We prove that $E_W(\delta \cdot \lambda) = \delta \cdot E_W(\lambda)$, the argument for $E_K$ being similar. Consider the space $1 \otimes \lambda \otimes \delta \subset \delta \cdot \Delta(\lambda)$. For $\h \subset \C[\h^*]^{co W} \subset \HW$ we have $\h \cdot (1 \otimes \lambda \otimes \delta) = 0$, thus there is a nonzero map $\Delta(\delta \cdot \lambda) \rightarrow \delta \cdot \Delta(\lambda)$. The space $1 \otimes \lambda \otimes \delta$ generates $\delta \cdot \Delta(\lambda)$ therefore the map is an isomorphism. The head of $\Delta(\delta \cdot \lambda)$ is $E_W(\delta \cdot \lambda)$ and the head of $\delta \cdot \Delta(\lambda)$ is $\delta \cdot E_W(\lambda)$. This proves the result. 
\end{proof}

\subsection{}\label{prop:Clifford2} Combining Proposition \ref{prop:Clifford}, the commutativity of diagram (\ref{eq:commute}) and Lemma \ref{lem:equiv} we can conclude that

\begin{prop}
Fix $\lambda \in \LW$ and write $\Res_{A_K}^{A_W} L(\lambda) = \tilde{L}(\mu_1) \oplus \ds \oplus \tilde{L}(\mu_k)$, where each $\tilde{L}(\mu_i)$ is nonzero, irreducible. Then
\begin{enumerate}
\item $C_{\tilde{L}(\mu_i)} = C_{\mu_i}$ and $C_{L(\lambda)}^\vee = C_{\lambda}^\vee$.
\item $C_{\tilde{L}(\mu_i)} = (C^\vee_d / C_{L(\lambda)}^\vee)^\vee \subset C_d$, hence $|C_{\tilde{L}(\mu_i)}| \cdot |C^\vee_{L(\lambda)}| = d$,
\item $C_d$ acts transitively on the set $\{ \tilde{L}(\mu_1), \ds , \tilde{L}(\mu_k) \}$,
\item the $\tilde{L}(\mu_i)$ are pairwise non-isomorphic,
\item $\Ind_{A_K}^{A_W} \tilde{L}(\mu_i) = \bigoplus_{\delta \in C^\vee_d / C_{L(\lambda)}^\vee} \delta \cdot L(\lambda)$.
\end{enumerate}
\end{prop}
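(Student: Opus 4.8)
The plan is to derive each of the five assertions by transporting the corresponding statement of Proposition \ref{prop:Clifford} across the equivalences $E_W$ and $E_K$, using the commutativity of \eqref{eq:commute} together with Lemma \ref{lem:equiv}. First I would verify that the decomposition in the statement is literally the image under $E_K$ of the decomposition $\Res_K^W \lambda = \mu_1 \oplus \cdots \oplus \mu_k$ of Proposition \ref{prop:Clifford}. Since $E_W(\lambda) = L(\lambda)$, commutativity of \eqref{eq:commute} and additivity of $E_K$ give $\Res_{A_K}^{A_W} L(\lambda) = \Res_{A_K}^{A_W} E_W(\lambda) \simeq E_K(\Res_K^W \lambda) = \bigoplus_i E_K(\mu_i) = \bigoplus_i \tilde{L}(\mu_i)$, so the $\mu_i$ occurring here are exactly those of Proposition \ref{prop:Clifford} and each $\tilde{L}(\mu_i)$ is simple because $E_K$ is an equivalence.

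The crucial step, from which the remaining parts follow formally, is part (1): matching stabilizers. For $g \in C_d$ with lift $\sigma \in W$, the standard isomorphism ${}^\sigma \tilde{L}(\mu) \cong \tilde{L}({}^\sigma \mu)$ gives $g \cdot \tilde{L}(\mu_i) = {}^\sigma \tilde{L}(\mu_i) \cong \tilde{L}({}^\sigma \mu_i) = \tilde{L}(g \cdot \mu_i)$. Because $\{ \tilde{L}(\mu) \mid \mu \in \LK \}$ is a complete set of pairwise non-isomorphic simples, $\tilde{L}$ is injective on isomorphism classes, so $g$ fixes $\tilde{L}(\mu_i)$ if and only if $g$ fixes $\mu_i$; hence $C_{\tilde{L}(\mu_i)} = C_{\mu_i}$. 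Dually, Lemma \ref{lem:equiv} gives $\delta \cdot L(\lambda) = \delta \cdot E_W(\lambda) \simeq E_W(\delta \cdot \lambda) = L(\delta \cdot \lambda)$ for $\delta \in C_d^\vee$, and injectivity of $L$ on isomorphism classes yields $\delta \cdot L(\lambda) \cong L(\lambda)$ iff $\delta \cdot \lambda \cong \lambda$, that is $C^\vee_{L(\lambda)} = C^\vee_\lambda$.

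With (1) in hand the remaining parts are direct rewritings of Proposition \ref{prop:Clifford}. Substituting $C_{\mu_i} = C_{\tilde{L}(\mu_i)}$ and $C_\lambda^\vee = C_{L(\lambda)}^\vee$ into part (1) of that Proposition yields part (2) here, including the order identity $|C_{\tilde{L}(\mu_i)}| \cdot |C^\vee_{L(\lambda)}| = d$. For (3), transitivity of the $C_d$-action on $\{ \mu_1, \ds, \mu_k \}$ transports through $g \cdot \tilde{L}(\mu_i) \cong \tilde{L}(g \cdot \mu_i)$ to transitivity on $\{ \tilde{L}(\mu_1), \ds, \tilde{L}(\mu_k) \}$, and (4) follows by applying the injectivity of $\tilde{L}$ on isomorphism classes to the pairwise non-isomorphic $\mu_i$. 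Finally, for (5) I would apply $E_W$ to $\Ind_K^W \mu_i = \bigoplus_{\delta \in C^\vee_d / C_\lambda^\vee} \delta \cdot \lambda$: commutativity of \eqref{eq:commute} gives $\Ind_{A_K}^{A_W} \tilde{L}(\mu_i) = \Ind_{A_K}^{A_W} E_K(\mu_i) \simeq E_W(\Ind_K^W \mu_i)$, while additivity of $E_W$ and Lemma \ref{lem:equiv} turn the right-hand side into $\bigoplus_\delta E_W(\delta \cdot \lambda) \simeq \bigoplus_\delta \delta \cdot L(\lambda)$, the sum running over $C^\vee_d / C^\vee_{L(\lambda)}$ by (1).

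I expect no serious obstacle, since the proposition is a formal consequence of the three cited results; the only point requiring genuine care is part (1), where one must confirm that the $C_d$- and $C_d^\vee$-actions on the restricted side are genuinely intertwined with the group-algebra actions under $E_K$ and $E_W$ — exactly the content packaged into Lemma \ref{lem:equiv} and the isomorphism ${}^\sigma \tilde{L}(\mu) \cong \tilde{L}({}^\sigma \mu)$ — so that the stabilizer subgroups correspond on the nose rather than merely up to conjugacy.
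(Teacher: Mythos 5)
Your proposal is correct and follows exactly the route the paper intends: the paper gives no written proof, simply asserting that the proposition follows by "combining Proposition \ref{prop:Clifford}, the commutativity of diagram (\ref{eq:commute}) and Lemma \ref{lem:equiv}," which is precisely the transport argument you carry out. Your write-up merely makes explicit the details (injectivity of $L$ and $\tilde{L}$ on isomorphism classes, additivity of the equivalences) that the paper leaves to the reader.
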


\subsection{}\label{lem:trivialaction} Since $C_d^\vee$ acts on the isomorphism classes of objects in $\HW \mmod$ and $C_d$ acts on the isomorphism classes of objects in $\HtK \mmod$, these groups also permute the blocks of the corresponding algebras. Hence there is an action of $C_d^\vee$ on the set $\CMW$ and an action of the group $C_d$ on the block partition of $\LK$ with respect to $\HtK$. 

\begin{lem}
The action of $C_d^\vee$ on $\CMW$ is trivial since each partition in $\CMW$ is a union of $C_d^\vee$ orbits. 
\end{lem}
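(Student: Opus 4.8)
The plan is to prove the equivalent statement that for every $\lambda \in \LW$ and every $\delta \in C_d^\vee$ the modules $L(\lambda)$ and $\delta \cdot L(\lambda) = L(\lambda \otimes \delta)$ lie in the same block of $\HW$; this is exactly the assertion that each block is $C_d^\vee$-stable, i.e. a union of $C_d^\vee$-orbits. Since $\HW$ is finite dimensional, two of its simple modules lie in the same block precisely when the centre $Z(\HW)$ acts on them by the same central character, and by M\"uller's theorem (as used in \S\ref{thm:lifting}) the blocks of $\HW$ are detected by $Z_{\mbf{c}}(W) := Z(H_{\mbf{c}}(W))$. So it suffices to compare the scalars by which $Z_{\mbf{c}}(W)$ acts. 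First I would record that, because $\delta$ is trivial on $K$, the $W$-modules $\lambda$ and $\lambda \otimes \delta$ have equal restriction to $K$; hence by the commutativity of \eqref{eq:commute} the modules $\Res L(\lambda)$ and $\Res L(\lambda \otimes \delta)$ have the same $\HtK$-composition factors (these are the $\tilde{L}(\mu_i)$ of Proposition \ref{prop:Clifford2}).

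The heart of the argument is the claim that $Z_{\mbf{c}}(W) \subseteq H_{\mbf{c}}(K)$. By Lemma \ref{lem:algpoly} the inclusion $H_{\mbf{c}}(K) \hookrightarrow H_{\mbf{c}}(W)$ realises $H_{\mbf{c}}(W) = \bigoplus_{g \in C_d} H_{\mbf{c}}(K) w_g$ as a $C_d$-graded algebra with degree-zero part $H_{\mbf{c}}(K)$, the $w_g$ being coset representatives of $K$ in $W$. Writing a central element as $z = \sum_g c_g w_g$ with $c_g \in H_{\mbf{c}}(K)$, I would use that, just as $A \subseteq Z_{\mbf{c}}(W)$, one has $\C[\h]^K \subseteq Z(H_{\mbf{c}}(K))$ (the $K$-analogue of \cite[Proposition 4.5]{1}, the point being that at $t=0$ the commutators $[y,f]$ vanish for $K$-invariant $f$). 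Comparing the degree-$g$ parts of $zb = bz$ for such a central $b$ gives $c_g\,({}^{w_g}b - b) = 0$. By Lemma \ref{lem:algpoly} the group $C_d$ acts faithfully on $\C[\h]^K$, so for $g \neq e$ one may choose $b \in \C[\h]^K$ with ${}^{w_g}b \neq b$; since $H_{\mbf{c}}(K)$ is a prime ring the nonzero central element ${}^{w_g}b - b$ is regular, and therefore $c_g = 0$. Hence $z = c_e \in H_{\mbf{c}}(K)$.

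Granting the claim, the proof concludes quickly. Any $z \in Z_{\mbf{c}}(W)$ maps into $\HtK \subseteq \HW$ and stays central there, so it acts on a simple module $L(\nu)$ by a scalar that is already determined by the $\HtK$-module $\Res L(\nu)$, indeed by its action on any composition factor. As $L(\lambda)$ and $L(\lambda \otimes \delta)$ restrict to $\HtK$-modules with a common composition factor, $Z_{\mbf{c}}(W)$ acts on them by equal scalars; by M\"uller's theorem they then have the same central character and lie in the same block of $\HW$. I expect the only genuinely non-formal step to be the containment $Z_{\mbf{c}}(W) \subseteq H_{\mbf{c}}(K)$: the induction--restriction machinery of \S\ref{prop:Clifford2} is built to be $C_d^\vee$-equivariant and so cannot, on its own, distinguish a trivial action from a nontrivial one on the set of blocks. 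The triviality must instead be extracted from the centre, and this is where the faithfulness of the $C_d$-action furnished by Lemma \ref{lem:algpoly}, together with the primeness of $H_{\mbf{c}}(K)$, does the essential work.
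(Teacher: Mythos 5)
Your proof is correct, but it takes a genuinely different route from the paper's. The paper argues directly with baby Verma modules: using Lemma \ref{lem:algpoly} it produces a $K$-invariant polynomial $g = f_1^{u_1}\cdots f_n^{u_n}$ on which $C_d$ acts by the character $\delta$ and whose image in $\C[\h]^{co\, W}$ is nonzero; since $g$ is central in $\HtK$ it commutes with $\h \subset \HW$, so $1 \otimes \delta\cdot\lambda \mapsto g \otimes \lambda$ defines a nonzero map $\Delta(\delta\cdot\lambda) \rightarrow \Delta(\lambda)$, and the indecomposability of baby Verma modules then places $L(\lambda)$ and $L(\delta\cdot\lambda)$ in the same block. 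You instead establish the stronger structural containment $Z_{\mbf{c}}(W) \subseteq H_{\mbf{c}}(K)$ (hence $Z_{\mbf{c}}(W) \subseteq Z(H_{\mbf{c}}(K))$) and conclude via central characters, M\"uller's theorem and the identity $\Res_K^W(\lambda\otimes\delta) = \Res_K^W\lambda$. Both proofs draw their essential non-formal input from the same place, namely Lemma \ref{lem:algpoly}: the paper uses the decomposition $C_d = C_{a_1}\times\cdots\times C_{a_n}$ with $a_1\cdots a_n = d$ to realise $\delta$ by an invariant polynomial, while you use the resulting faithfulness of the $C_d$-action on $\C[\h]^K$ to kill the components $c_g$ for $g \neq e$. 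The trade-off is that your route needs two facts the paper never records: that $\C[\h]^K \subseteq Z(H_{\mbf{c}}(K))$ at $t=0$ (harmless -- this is \cite[Proposition 4.5]{1} applied to $K$), and that nonzero elements of $\C[\h]^K$ are regular in $H_{\mbf{c}}(K)$. The latter is true; you get it from primeness of $H_{\mbf{c}}(K)$ (which holds because the associated graded algebra $\C[\h\oplus\h^*]\rtimes K$ is prime, though this is nowhere stated in the paper), but it can be had more cheaply from the PBW theorem together with the Chevalley--Shephard--Todd theorem, which exhibit $H_{\mbf{c}}(K)$ as a free module over the domain $\C[\h]^K\otimes\C[\h^*]^K$. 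In exchange your argument yields the containment $Z_{\mbf{c}}(W) \subseteq Z(H_{\mbf{c}}(K))$, which is of independent interest, and avoids baby Verma modules entirely; the paper's proof is shorter and self-contained.
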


\begin{proof}
 Let $\delta$ be a generator of $C_d^\vee$. Fix $B$ to be a block of $\HW$ and $\lambda \in \LW$ such that $L(\lambda)$ is a simple module for $B$. Then we must show that $L(\delta \cdot \lambda)$ is also a simple module for $B$. Since the baby Verma modules $\Delta(\lambda)$ and $\Delta(\delta \cdot \lambda)$ are indecomposable it suffices to show that there is a nonzero map $\Delta(\delta \cdot \lambda) \rightarrow \Delta(\lambda)$. In the notation of Lemma \ref{lem:algpoly}, $\C[U^*]^{co C_d}$ is isomorphic to the regular representation as a $C_d$-module. Let $\{ f_1, \ds, f_n \}$ be the set of generators described in Lemma \ref{lem:algpoly}. Then there exist $u_1, \ds, u_n$ with $0 \le u_i < a_i$ such that $g := f_1^{u_1} \cdots f_n^{u_n}$ equals $\delta$ as characters of $C_d$. Moreover the image of $g$ in $\C[\h]^{co W}$ is non-zero. The polynomial $g$ is $K$-invariant therefore it is central in $\HtK$. Since $\HtK \subset \HW$, $g$ commutes with the elements $\h \subset \HW$. Therefore the required map exists and is uniquely defined by $1 \otimes \delta \cdot \lambda \stackrel{\sim}{\longrightarrow} g \otimes \lambda$.
\end{proof}

\subsection{Twisted symmetric algebras} 
We shall show that $\HW$ is an example of a twisted symmetric algebra with respect to the group $C_d$. We follow the exposition given in \cite[Section $1$]{Chlou2} (see also \cite{Chlou3}). Although we do not use the properties of $\HW$ derived from the fact that it is a symmetric algebra we recall the relevant definitions for completeness. Let $A$ be a finite dimensional $\C$-algebra. 
\bdefn
A trace function on $A$ is a linear map $t \, : \, A \rightarrow \C$ such that $t(ab)= t(ba)$ for all $a, b \in A$. It is called a symmetrizing form on $A$, and $A$ itself is said to be a symmetric algebra, if the morphism
\bdm
\hat{t} \, : \, A \rightarrow \Hom_{\C}(A,\C), \qquad a \mapsto (\hat{t}(a) \, : \,  b \mapsto t(ab))
\edm
is an isomorphism of $(A,A)$-bimodules. 
\edefn

\begin{prop}[\cite{BGS}, Corollary 3.7]
The restricted rational Cherednik algebra $\HW$ is a symmetric algebra.
\end{prop}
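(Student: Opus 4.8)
The plan is to exhibit an explicit symmetrizing form on $\HW$ by using the PBW/tensor decomposition $\HW \cong \C[\h]^{coW} \otimes \C W \otimes \C[\h^*]^{coW}$ established in (\ref{eq:PBW}). The natural candidate is the composite of the projection onto a distinguished one-dimensional ``top'' component with a suitable nondegenerate pairing. More precisely, both coinvariant algebras $\C[\h]^{coW}$ and $\C[\h^*]^{coW}$ are Poincar\'e duality algebras: each has a one-dimensional top-degree piece spanned by the image of the Jacobian (the product of a basis of linear forms, equivalently the fundamental class), and multiplication $\C[\h]^{coW} \times \C[\h]^{coW} \to (\C[\h]^{coW})_{\mathrm{top}}$ is a perfect pairing, and similarly for $\h^*$. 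The group algebra $\C W$ carries its standard symmetrizing form picking out the coefficient of the identity $1 \in W$. I would define $t$ on $\HW$ to be the linear functional that, written in the triangular decomposition, extracts the coefficient of $\omega_{\h} \otimes 1 \otimes \omega_{\h^*}$, where $\omega_{\h}, \omega_{\h^*}$ are fixed generators of the two top-degree spaces.

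First I would verify that $t$ is a trace, i.e. $t(ab) = t(ba)$ for all $a,b$. This is the step where one must be slightly careful, because the algebra is noncommutative and the PBW decomposition is only a vector-space isomorphism, not an algebra isomorphism; commuting a $y \in \h$ past an $x \in \h^*$ produces the correcting term $-\sum_s \mbf{c}(s)(y,\alpha_s)(\alpha_s^\vee,x)\,s$ from (\ref{eq:rel}) (with $t=0$). The trace property should nonetheless follow from a grading argument: $\HW$ is graded by the weight under the scaling torus (assigning degree $+1$ to $\h^*$, degree $-1$ to $\h$, and degree $0$ to $W$), and the functional $t$ is supported in a single such degree together with a single $W$-component, so the usual reduction applies. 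Concretely, one checks $t([a,b]) = 0$ on the generators $x,y,w$ and extends by the derivation/bilinearity properties of the commutator.

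Next I would show that $\hat{t}$ is a bijection, equivalently that the pairing $(a,b) \mapsto t(ab)$ is nondegenerate. Here the three tensor factors decouple: the pairing restricts to the perfect top-degree pairings on $\C[\h]^{coW}$ and $\C[\h^*]^{coW}$ and to the standard nondegenerate form on $\C W$, and a triangularity argument (order monomials by degree in the coinvariant factors) shows the global pairing on $\HW$ has block-triangular Gram matrix with invertible diagonal blocks, hence is nondegenerate. This gives the $(A,A)$-bimodule isomorphism claimed in the definition. The main obstacle I anticipate is the first step: confirming that $t$ genuinely satisfies $t(ab)=t(ba)$ despite the nontrivial commutator relation, since naively pushing the $W$-component around can shift which top-degree monomial is selected. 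The cleanest route is to invoke the filtration by the commutative PBW filtration, observe that $t$ depends only on the associated-graded (commutative) algebra where the factors honestly decouple, and check that the lower-order correction terms from (\ref{eq:rel}) land in degrees annihilated by $t$. This is exactly the structure packaged in \cite{BGS}, so I would ultimately cite Corollary~3.7 there for the verification while indicating the mechanism above.
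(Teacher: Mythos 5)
The paper offers no proof of this Proposition: it is imported verbatim as \cite[Corollary 3.7]{BGS}, and the only related material in the paper is the later appeal to the symmetrizing form $\Phi$ of \cite[(3.5)]{BGS}. Your outline is consistent with what that reference actually does --- the form is precisely the functional extracting the coefficient of $\omega_{\h} \otimes 1 \otimes \omega_{\h^*}$ in the decomposition $\C[\h]^{co W} \otimes \C W \otimes \C[\h^*]^{co W}$, and the nondegeneracy argument is the block-triangularity you describe --- and since you too end by citing \cite{BGS} for the trace verification, there is no substantive divergence to report. One point your sketch elides, and which is the one genuinely non-formal ingredient: the reason the form can be centred on the identity element of $W$ (yielding a \emph{symmetric} algebra rather than merely a Frobenius one) is that $W$ acts on the top-degree lines of $\C[\h]^{co W}$ and $\C[\h^*]^{co W}$ by mutually inverse determinant characters, so the Nakayama twists contributed by the two coinvariant factors cancel; a similar duality is what makes the correction terms of (\ref{eq:rel}) land in degrees killed by $t$. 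If you were to write the argument out in full rather than deferring to \cite{BGS}, that cancellation is the step you would need to make explicit.
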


\subsection{}\label{lem:subsymm} Let $A$ be a symmetric algebra with form $t$ and $B$ a subalgebra of $A$. Then $B$ is said to be a symmetric subalgebra of $A$ if the restriction of $t$ to $B$ is a symmetrizing form for $B$ and $A$ is free as a left $B$-module. 

\begin{lem}
The algebra $\HtK$ is a symmetric subalgebra of $\HW$.
\end{lem}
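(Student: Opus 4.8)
The plan is to verify the two defining conditions for a symmetric subalgebra directly, using the explicit symmetrizing form on $\HW$ together with the structural decomposition of $\HW$ over $\HtK$ that is already available. Recall from Lemma \ref{lem:algpoly} and the discussion preceding Proposition \ref{prop:Clifford} that, after choosing coset representatives $w_1, \ds, w_d$ of $C_d$ in $W$, we have $\HW = \bigoplus_i \HtK w_i$. This decomposition already shows that $\HW$ is free as a left $\HtK$-module of rank $d$, which is precisely the second condition in the definition of a symmetric subalgebra (\ref{lem:subsymm}). So the first thing I would do is record this freeness as an immediate consequence of Lemma \ref{lem:algpoly}, and note that by symmetry (or by the analogous right-coset decomposition) $\HW$ is also free as a right $\HtK$-module.

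The substantive part is to show that the restriction of the symmetrizing form $t$ of $\HW$ to $\HtK$ is itself a symmetrizing form for $\HtK$. First I would make the symmetrizing form explicit. Using the vector space isomorphism $\HW \cong \C[\h]^{coW} \otimes \C W \otimes \C[\h^*]^{coW}$ from (\ref{sec:restricteddefinition}), the symmetrizing form from \cite{BGS} is built as a tensor product of the canonical forms on the two (graded, Gorenstein) coinvariant algebras $\C[\h]^{coW}$ and $\C[\h^*]^{coW}$ together with the standard symmetrizing form on the group algebra $\C W$ (picking out the coefficient of the identity). Concretely, $t$ picks out the top-degree coefficient in each coinvariant factor and the $\C\cdot 1$ coefficient in the group algebra factor. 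The analogous form $t_K$ for $\HtK \cong \C[\h]^{coW} \otimes \C K \otimes \C[\h^*]^{coW}$ is constructed identically but using $\C K$ in place of $\C W$. Since $K \subset W$ and the identity element is shared, the $\C\cdot 1$ coefficient functional on $\C W$ restricts to the $\C\cdot 1$ coefficient functional on $\C K$; the two coinvariant factors are literally the same (both are taken with respect to $W$ in $\HtK$). Thus the key computation is to check that $t|_{\HtK} = t_K$, which reduces to comparing these three functionals factor by factor.

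Granting $t|_{\HtK} = t_K$, it remains only to know that $t_K$ is a symmetrizing form for $\HtK$, i.e. that $\HtK$ is itself a symmetric algebra. This should follow from the same argument of \cite{BGS} applied to $\HtK$: the tensor product of symmetrizing forms on symmetric algebras is a symmetrizing form on the tensor product, and each of $\C[\h]^{coW}$, $\C K$, $\C[\h^*]^{coW}$ is a symmetric algebra (the coinvariant rings being graded Frobenius, the group algebra being symmetric). One must be slightly careful because $\HtK$ is only a smash-product-like twist of this tensor product rather than an honest tensor product, but the nondegeneracy of $\hat{t}_K$ is a statement about the induced pairing on a finite-dimensional space, and the triangular structure coming from the PBW filtration ensures that the pairing matrix is nondegenerate exactly as in the $\HW$ case. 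Therefore I would either cite the \cite{BGS} construction as applying verbatim to $\HtK$, or remark that $\HtK$ is the restricted rational Cherednik algebra attached to the pair $(K,\mbf{c})$ but with coinvariants taken with respect to $W$, and that the Gorenstein/Frobenius input to \cite{BGS} depends only on the grading, which is unchanged.

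The main obstacle I anticipate is the bookkeeping in identifying $t|_{\HtK}$ with the intrinsic symmetrizing form $t_K$: the subtlety is that the symmetrizing form on $\HW$ is not simply the coefficient of $1$ in a naive basis but is twisted by the interaction between the group algebra part and the top-degree components of the two coinvariant algebras, so I must make sure that restricting this twisted functional to the subalgebra $\HtK$ (where the group-algebra factor shrinks from $\C W$ to $\C K$ but the coinvariant factors are unchanged) genuinely yields the corresponding twisted functional for $\HtK$ rather than some rescaling. Once the explicit description of the form is pinned down, this is a direct check, but it is the step where a sign or normalization could go wrong, and hence the one I would write out most carefully.
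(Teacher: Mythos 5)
Your proposal is correct and follows essentially the same route as the paper: freeness of $\HW$ as a left $\HtK$-module comes from the PBW decomposition $\HW = \bigoplus_i \HtK w_i$ over coset representatives, and the symmetrizing property of $t|_{\HtK}$ comes from inspecting the explicit construction of the form in \cite{BGS}. The paper simply cites the proof of \cite[Lemma 3.5]{BGS} for the second point, whereas you unpack that argument in more detail; the substance is the same.
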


\begin{proof}
If $w_1, \ds , w_d$ are left coset representatives of $K$ in $W$, then the PBW property (\ref{eq:PBW}) implies that $\HW$ is a free left $\HtK$-module with basis $w_1, \ds , w_d$. The fact that the restriction of $t$ to $\HtK$ is symmetrizing is clear from the proof of \cite[Lemma 3.5]{BGS}.
\end{proof}

\begin{defn}
Following \cite[Definition 1.10]{Chlou2} we say that the symmetric algebra $(A,t)$ is a twisted symmetric algebra of a finite group $G$ over the subalgebra $B$ if $B$ is a symmetric subalgebra of $A$ and there is a family of vector subspaces $\{ A_g \, | g \in G \}$ of $A$ such that the following conditions hold:
\begin{enumerate}
\item $A = \bigoplus_{g \in G} A_g,$
\item $A_g A_h = A_{gh}$ for all $g,h \in G$,
\item $A_1 = B$,
\item $t(A_g) = 0$ for all $g \in G, \, g \neq 1$,
\item $A_g \cap A^{\times} \neq \emptyset$ for all $g \in G$ (here $A^{\times}$ are the units of $A$).
\end{enumerate}
\end{defn}

\begin{prop}
The symmetric algebra $\HW$ is a twisted symmetric group algebra of the group $C_d$ over the subalgebra $\HtK$.
\end{prop}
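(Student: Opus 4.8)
The plan is to produce the grading $\{A_g\}_{g \in C_d}$ explicitly from coset representatives and then verify the five axioms one by one, using that $\HtK$ has already been shown to be a symmetric subalgebra of $\HW$. Identify $C_d$ with $W/K$, fix coset representatives $w_1, \ds, w_d$ of $K$ in $W$ with $w_1 = 1$, and for $g \in C_d$ with chosen representative $w_g$ put $A_g := \HtK\, w_g \subseteq \HW$. This is independent of the representative: replacing $w_g$ by $w_g k$ with $k \in K$ changes nothing, since $w_g k w_g^{-1} \in K$ is a unit of $\HtK$ and hence $\HtK w_g k = \HtK w_g$.

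Axioms (1), (3) and (5) are immediate. Axiom (1), namely $\HW = \bigoplus_{g} A_g$, is exactly the decomposition $\HW = \bigoplus_i \HtK w_i$ recorded in (\ref{sec:Clifford}), which follows from the PBW property. Axiom (3), $A_1 = \HtK = B$, holds by the choice $w_1 = 1$. For axiom (5), the representative $w_g$ is an element of $W$ and therefore a unit of $\HW$ lying in $A_g$, so $A_g \cap \HW^{\times} \neq \emptyset$.

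For axiom (2) the key point is that each $w_g$ normalizes $\HtK$. Since $W$ normalizes $K$, conjugation by $w_g$ preserves $\C K$; since $w_g$ fixes $\C[\h]^W$ and $\C[\h^*]^W$, it preserves the defining ideals of the coinvariant algebras and hence acts on $\C[\h]^{co W}$ and $\C[\h^*]^{co W}$. Therefore conjugation by $w_g$ preserves $\HtK \cong \C[\h]^{co W} \otimes \C K \otimes \C[\h^*]^{co W}$, giving $w_g \HtK = \HtK w_g$. Consequently $A_g A_h = \HtK w_g \HtK w_h = \HtK w_g w_h$, and as $w_g w_h$ represents $gh$ (and the element of $K$ relating it to $w_{gh}$ is a unit of $\HtK$) this equals $\HtK w_{gh} = A_{gh}$.

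The step I expect to be the main obstacle is axiom (4), that the symmetrizing form $t$ of \cite{BGS} satisfies $t(A_g) = 0$ for $g \neq 1$; this is the only axiom that uses the actual shape of $t$ rather than formal bookkeeping. Here I would invoke the explicit description of $t$ from the proof of \cite[Lemma 3.5]{BGS}: under the PBW identification $\HW \cong \C[\h]^{co W} \otimes \C W \otimes \C[\h^*]^{co W}$, the form $t$ pairs the top-degree parts of the two coinvariant algebras and is supported on the $\C W$-component indexed by $1 \in W$. Writing a general element of $A_g = \HtK w_g$ in PBW normal form, one moves $w_g$ to the left of the $\C[\h^*]^{co W}$-factor and finds that its $\C W$-component lies in $\C[K w_g]$, a span of elements $k w_g$ with $k \in K$; when $g \neq 1$ none of these equals $1$, so $t$ annihilates the element. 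Together with (1)--(3) and (5), and the previously established fact that $\HtK$ is a symmetric subalgebra, this shows $\HW$ is a twisted symmetric algebra of $C_d$ over $\HtK$.
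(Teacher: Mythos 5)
Your proof is correct and follows essentially the same route as the paper: the same decomposition $A_g = \HtK\, w_g$ from coset representatives, with conditions (1), (3), (5) immediate, condition (2) from $w_g$ normalizing $\HtK$, and condition (4) from the explicit form of the symmetrizing form in \cite[(3.5)]{BGS}. You simply spell out in more detail the normalization argument and the support of the form, which the paper leaves as ``clear''.
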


\begin{proof}
As in Lemma \ref{lem:subsymm}, let $w_1, \ds , w_d$ be left coset representatives of $K$ in $W$ and assume $C_d = \{ g_1, \ds , g_d\}$, such that $Kw_i = g_i$ in $W/K = C_d$. Then $\HW_{g_i} := \HtK \cdot w_i$. Conditions $(1), \, (3)$ and $(5)$ are clear. Since conjugation by $w_i$ defines an automorphism of $\HtK$, condition $(2)$ is also clear. Finally condition $(4)$ follows from the definition of the symmetrizing form $\Phi$ given in \cite[(3.5)]{BGS}.
\end{proof} 

\subsection{}\label{thm:compare1} We are now in a situation where we can apply \cite[Proposition 2.3.18]{Chlou3}. 

\begin{thm}
For $\mc{S} \subset \LW$, let $\Gamma(\mc{S})$ be the set of all $\mu \in \LK$ occurring as a summand of $\Res_{K}^{W} \, \lambda$ for some $\lambda \in \mc{S}$. Let $\mc{P} \in \CMW$. Then there exists $\mc{Q} \in \CMK$ such that $\Gamma(\mc{P}) = C_d \cdot \mc{Q}$. This implies that there is a bijection
\bdm
\CMW \stackrel{1 : 1}{\longleftrightarrow} \CMK / C_d.
\edm
\end{thm}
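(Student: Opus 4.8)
The plan is to transfer the block-theoretic structure of the twisted symmetric algebra $\HW$ over $C_d$ directly to the Calogero-Moser partitions, using the Clifford-theoretic dictionary already assembled. The key external input is \cite[Proposition 2.3.18]{Chlou3}, which for a twisted symmetric algebra $A$ of a group $G$ over a subalgebra $B$ relates the blocks of $A$ to the $G$-orbits on the blocks of $B$. We have verified in (\ref{thm:compare1}) that $\HW$ is precisely such an algebra with $A = \HW$, $B = \HtK$ and $G = C_d$, so the hypotheses are met; moreover by Proposition (\ref{thm:lifting}) the block partition of $\HtK$ on $\LK$ coincides with $\CMK$, so we are free to phrase everything in terms of $\CMK$ rather than the intermediate algebra.

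First I would recall the precise conclusion of the cited proposition in this setting: it should assert that restriction $\Res_{A_K}^{A_W}$ and induction $\Ind_{A_K}^{A_W}$ set up a correspondence under which each block $\mc{P}$ of $A_W$ restricts to a union of blocks of $A_K$ forming a single $C_d$-orbit, and conversely each $C_d$-orbit of blocks of $A_K$ induces up into a single block of $A_W$. I would then translate this into the language of partitions: a block $\mc{P} \in \CMW$ is a set of $\lambda \in \LW$, and applying $\Gamma$ (which by Proposition (\ref{prop:Clifford2}) records exactly the constituents of $\Res_K^W \lambda$) produces a subset of $\LK$. The content of the cited proposition is then that $\Gamma(\mc{P})$ is a union of blocks of $\HtK$ which is stable under $C_d$ and forms a single orbit; combined with Proposition (\ref{thm:lifting}) this says $\Gamma(\mc{P}) = C_d \cdot \mc{Q}$ for some $\mc{Q} \in \CMK$.

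Next I would assemble the bijection $\CMW \leftrightarrow \CMK / C_d$. The map sends $\mc{P} \mapsto C_d \cdot \mc{Q} = \Gamma(\mc{P})$. Surjectivity follows because every $\mu \in \LK$ occurs in $\Res_K^W \lambda$ for some $\lambda$ (by Clifford theory, Proposition (\ref{prop:Clifford})), so every block $\mc{Q}$ of $\HtK$ lies in $\Gamma(\mc{P})$ for some $\mc{P}$. Injectivity and well-definedness come from the induction half of the correspondence: if $\Gamma(\mc{P})$ and $\Gamma(\mc{P}')$ meet the same $C_d$-orbit of $\HtK$-blocks, then inducing that orbit back up via part (5) of Proposition (\ref{prop:Clifford2}) lands in a single block of $\HW$, forcing $\mc{P} = \mc{P}'$. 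Here the key technical ingredient is that $\Ind_{A_K}^{A_W}$ of any constituent $\tilde{L}(\mu_i)$ returns a $C_d^\vee$-orbit of $L(\lambda)$'s (part (5)), all of which lie in the same block of $\HW$ by Lemma (\ref{lem:trivialaction}), since that lemma guarantees each block of $\HW$ is a union of $C_d^\vee$-orbits.

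\emph{The main obstacle} I anticipate is not any single deep step but rather the bookkeeping needed to check that the abstract block correspondence of \cite[Proposition 2.3.18]{Chlou3} matches up, under the equivalences $E_W$ and $E_K$ of (\ref{lem:equiv}) and the commuting diagram (\ref{eq:commute}), with the concrete combinatorial operation $\Gamma$ on irreducibles. In other words, the delicate point is verifying that ``blocks of $A_W$'' and ``blocks of $A_K$'' in the cited result correspond exactly to parts of $\CMW$ and $\CMK$, and that the group action appearing there (on $\HtK$-blocks) is genuinely the $C_d$-action we defined in (\ref{sec:Clifford}), rather than some a priori different action. Once that identification is pinned down, together with the triviality of the $C_d^\vee$-action recorded in Lemma (\ref{lem:trivialaction}), the bijection follows formally.
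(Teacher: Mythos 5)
Your proposal follows essentially the same route as the paper: identify the blocks of $\HtK$ with $\CMK$ via Proposition (\ref{thm:lifting}), apply \cite[Proposition 2.3.18]{Chlou3} to the twisted symmetric algebra structure of $\HW$ over $\HtK$, and use Lemma (\ref{lem:trivialaction}) to collapse the $C_d^\vee$-orbits on $\CMW$ to single blocks. The translation of the abstract block correspondence into the map $\Gamma$, and the role of parts (1)--(5) of Proposition (\ref{prop:Clifford2}), are all as in the paper.

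There is one point where you assert more than you have checked: you write that ``the hypotheses are met'' for \cite[Proposition 2.3.18]{Chlou3} simply because $\HW$ is a twisted symmetric algebra of $C_d$ over $\HtK$. In Chlouveraki's framework the block-orbit correspondence is developed under the standing assumption that the base field can be extended so that the algebra becomes split semisimple; this is used to prove \cite[Proposition 2.3.15]{Chlou3}, on which 2.3.18 builds. No such extension exists for $\HW$ (it is not semisimple over any field extension), so the cited result is not literally applicable. The paper's proof handles this by observing that Proposition (\ref{prop:Clifford2}) serves as a substitute for \cite[Proposition 2.3.15]{Chlou3} in this setting, and that the proof of \cite[Proposition 2.3.18]{Chlou3} does not otherwise invoke the splitting field. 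Your ``main obstacle'' paragraph gestures at the right kind of bookkeeping, but the specific issue to resolve is this one, and without it the citation is not justified as stated.
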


\begin{proof}
Proposition \ref{thm:lifting} tells us that $\{ \textrm{blocks of $\HtK$} \} = \CMK$. This identification is $C_d$-equivariant. Therefore it suffices to show that theorem holds but with $\CMK$ replaced by $\{ \textrm{blocks of $\HtK$} \}$. In \cite{Chlou3} Chlouveraki makes use of the existence of a field extension of the base field of the twisted symmetric algebra $A$ such that the extended symmetric algebra is split-semisimple. This fact is used to prove \cite[Proposition 2.3.15]{Chlou3}. Such an extension does not exist for $\HW$ but Proposition \ref{prop:Clifford2} is our substitute result. Now \cite[Proposition 2.3.18]{Chlou3} is applicable, with $A = \HW$ and $\bar{A} = \HtK$ since its proof does not explicitly rely on the existence of a ``splitting field extension''. This result says that the rule $C_d^\vee \cdot \mc{P} \mapsto \Gamma( C_d^\vee \cdot \mc{P})$ defines a bijection between the set of $C^\vee_d$-orbits in $\CMW$ and the $C_d$-orbits in $\{ \textrm{blocks of $\HtK$} \}$. However, Lemma \ref{lem:trivialaction} says that the action of $C_d^\vee$ on $\CMW$ is trivial.
\end{proof}

\subsection{}\label{lem:singletons} Let us note a particular situation where we can give a more precise result.

\begin{lem}
Let $\lambda \in \LW$ such that $\{ \lambda \} \in \CMW$. Then $\Res_K^W \lambda = \oplus_{i = 1}^d \mu_i$, $\mu_i \not\cong \mu_j$ for $i \neq j$ and $\{ \mu_i \} \in \CMK$ for $1 \le i \le d$.
\end{lem}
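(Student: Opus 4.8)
The plan is to use the machinery already in place to reduce the whole statement to a single assertion about the freeness of a $C_d$-action, and then to settle that assertion by means of the central characters of the restricted algebras.

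First I would record that the hypothesis $\{\lambda\}\in\CMW$ forces $C^\vee_\lambda = C^\vee_d$. Indeed, by Lemma \ref{lem:trivialaction} every part of $\CMW$ is a union of $C^\vee_d$-orbits, so the $C^\vee_d$-orbit of $\lambda$ is contained in the singleton $\{\lambda\}$ and is therefore trivial; that is, $\lambda\otimes\delta\cong\lambda$ for all $\delta\in C^\vee_d$. Feeding $C^\vee_\lambda = C^\vee_d$ into Proposition \ref{prop:Clifford} yields the first two assertions at once: writing $\Res^W_K\lambda = \mu_1\oplus\cdots\oplus\mu_k$, part (1) gives $|C_{\mu_i}| = d/|C^\vee_\lambda| = 1$, so by transitivity (part (2)) the number of summands is $k = d/|C_{\mu_i}| = d$, and the $\mu_i$ are pairwise non-isomorphic by part (3). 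In particular $C_d$ acts freely and transitively on $\{\mu_1,\dots,\mu_d\}$.

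Next I would invoke the comparison theorem of \S\ref{thm:compare1} with $\mc P = \{\lambda\}$. Since $\Gamma(\{\lambda\})$ is exactly the set of constituents of $\Res^W_K\lambda$, it gives $\{\mu_1,\dots,\mu_d\} = C_d\cdot\mc Q$ for some $\mc Q\in\CMK$. The parts of $\CMK$ meeting $\{\mu_1,\dots,\mu_d\}$ form a $C_d$-stable partition of the free transitive $C_d$-set $\{\mu_1,\dots,\mu_d\}$, and any such partition is the partition into cosets of a single subgroup $S\le C_d$, namely the common stabiliser of the blocks in the orbit $C_d\cdot\mc Q$; hence every block has size $|S|$ and there are $d/|S|$ of them. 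The remaining assertion, that each $\{\mu_i\}$ is a part of $\CMK$, is thus equivalent to the single statement $S = 1$.

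The crux, and the step I expect to be the main obstacle, is proving $S = 1$. This does \emph{not} follow from the twisted-symmetric-algebra (crossed-product) formalism alone: that formalism permits a single $C_d$-stable block of $\HtK$ whose simple modules form one free $C_d$-orbit to sit under a singleton block of $\HW$ — the group-algebra analogue $\C W = \C K \ast C_d$ already realises this — so some genuinely Cherednik-theoretic input is required. The natural route is through central characters. Two of the $\mu_i$ lie in the same part of $\CMK$ precisely when $L(\mu_i)$ and $L(\mu_j)$ have the same central character for $Z_{\mbf c}(K)$, and because the $C_d$-action on $\LK$ is compatible with the action of $C_d = W/K$ on $Z_{\mbf c}(K)$ one identifies $S$ with the stabiliser in $C_d$ of the central character $\chi_{\mu_1}$ of $L(\mu_1)$. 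Using the inclusion $Z_{\mbf c}(K)^{C_d}\subseteq Z_{\mbf c}(W)$, so that $\spec Z_{\mbf c}(W)$ is a quotient of $\spec Z_{\mbf c}(K)$ by $C_d$, together with the characterisation of Calogero--Moser singletons as the smooth (equivalently reduced) points, one argues that a singleton $\{\lambda\}$ of $\HW$ can arise only over a point of $\spec Z_{\mbf c}(K)$ with free $C_d$-orbit; that is, $\chi_{\mu_1}$ has trivial stabiliser and $S = 1$. Making this geometric implication precise — ruling out a $C_d$-fixed central character that supports a singleton of $\HW$ — is where the real work lies, and it is the only step that cannot be read off directly from the results established above.
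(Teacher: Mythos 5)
Your treatment of the first two assertions is correct and, if anything, more direct than the paper's: Lemma \ref{lem:trivialaction} forces $C^\vee_\lambda = C^\vee_d$, and Proposition \ref{prop:Clifford} then immediately gives $d$ pairwise non-isomorphic summands permuted freely and transitively by $C_d$ (the paper only recovers $e=d$ at the very end, as a by-product). Your reduction of the third assertion to the single claim $S=1$ is also sound, and you are right that Theorem \ref{thm:compare1} by itself cannot supply it. (As an aside, the group-algebra ``realisation'' you offer as evidence does not actually occur: $\C K$ is semisimple, so its blocks are singletons and none can contain a free $C_d$-orbit; but your broader point stands.)

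The genuine gap is that $S=1$ is the entire content of the lemma's last assertion and you do not prove it: the route through central characters, the covering $\spec Z_{\mbf{c}}(K) \to \spec Z_{\mbf{c}}(W)$ and the smooth locus is a programme, not an argument, and you concede as much. The paper closes this by a short dimension count rather than geometry. By \cite[(5.3)]{Baby}, $\{\lambda\} \in \CMW$ is equivalent to $\dim L(\lambda) = |W|$, and likewise $\{\mu\}$ is a block of $\HK$ (equivalently of $\HtK$, by Proposition \ref{thm:lifting}) if and only if $\dim \tilde{L}(\mu) = |K|$, every simple having dimension at most $|K|$. Since Proposition \ref{prop:Clifford2} gives $\Res_{A_K}^{A_W} L(\lambda) = \oplus_{i=1}^{e} \tilde{L}(\mu_i)$ and the $\tilde{L}(\mu_i)$, being a single $C_d$-orbit, share a common dimension $r \le |K|$, one gets $|W| = e\cdot r \le d\cdot|K| = |W|$, forcing $r = |K|$ and hence each $\{\mu_i\}$ to be a singleton block. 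This is precisely the Cherednik-theoretic input you correctly identified as necessary, but it is representation-theoretic and already available from the cited results; without it (or a completed version of your geometric argument) your proof establishes only the first two claims of the lemma.
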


\begin{proof}
Again, since Proposition \ref{thm:lifting} tells us that $\{ \textrm{blocks of $\HtK$} \} = \CMK$ it suffice to show the statement holds with $\CMK$ replaced by $\{ \textrm{blocks of $\HtK$} \}$. Proposition \ref{prop:Clifford} tells us that $\Res_K^W \lambda = \oplus_{i = 1}^e \mu_i$ for some $e$ dividing $d$ and $\mu_i \not\cong \mu_j$ for $i \neq j$. Moreover, there exists $g \in C_d$ such that ${}^g \mu_i = \mu_j$ and hence ${}^g \tilde{L}(\mu_i) = \tilde{L}(\mu_j)$. In particular, $\dim \tilde{L}(\mu_i) = \dim \tilde{L}(\mu_j) = r$ for all $i,j$ and some $r \le |K|$. It is shown in \cite[(5.3)]{Baby} that $\dim L(\lambda) = |W|$ if and only if $\{ \lambda \}$ is a partition of $\CMW$. Proposition \ref{prop:Clifford2} says that $\Res_{A_K}^{A_W} L(\lambda) = \oplus_{i = 1}^e \tilde{L}(\mu_i)$. Comparing the dimension of both sides gives 
\bdm
|W| = e \cdot r \le d \cdot |K| = |W|.
\edm
Thus $e = d$ and $r = |K|$. Again, by \cite[(5.3)]{Baby}, $\dim \tilde{L}(\mu_i) = |K|$ implies that $\{ \mu_i \}$ is a block of $\HtK$. 
\end{proof}

\begin{remark}
In this article we focus on the particular case of $W = G(m,1,n)$ and $K = G(m,d,n)$ (details are given in section \ref{sec:example}). However, we believe that it is advantageous to present Theorem \ref{thm:compare1} in the level of generality that we have done here since there are many examples among the $34$ exceptional irreducible complex reflection groups of pairs $(W,K)$. Therefore in order to calculate the Calogero-Moser partition for all exceptional groups it would suffice to consider only certain groups. We refer the reader to the appendix of \cite{Chlou3} for a list of many such pairs $(W,K)$.
\end{remark}

\section{The imprimitive groups $G(m,d,n)$}\label{sec:example}

\subsection{} The irreducible complex reflection groups are divided into two classes, the primitive complex reflection groups and the imprimitive complex reflection groups. The groups were classified by Shephard and Todd in \cite{5}. There are 34 primitive complex reflection groups, which in the classification of \cite{5} are labelled $G_4, \dots , G_{37}$. They are also known as the exceptional complex reflection groups. In this section we will consider instead the imprimitive complex reflection groups. These belong to one infinite family $G(m,d,n)$, where $m,d,n \in \N$ and $d$ divides $m$. Let $S_n$ be the symmetric group on $n$ elements, considered as the group of all $n \times n$ permutation matrices. Let $A(m,d,n)$ be the group of all diagonal matrices whose diagonal entries are powers of a certain (fixed) $m^{th}$ root of unity and whose determinant is a $(m/d)^{th}$ root of unity. The group $S_n$ normalizes $A(m,d,n)$ and $G(m,d,n)$ is defined to be the semidirect product of $A(m,d,n)$ by $S_n$. Note that $G(m,1,n)$ is the wreath product group $C_m \wr S_n$. Fix $p = m/d$.

\subsection{The conjugacy classes of reflections}
Fix $\zeta$ a primitive $m^{th}$ root of unity. Let $s_{(i,j)} \in S_n$ denote the transposition swapping $i$ and $j$ and let $\varepsilon_i^k$ be the matrix in $A(m,1,n)$ which has ones all along the diagonal except in the $i^{th}$ position where its entry is $\zeta^k$. The conjugacy classes of reflections in $G(m,1,n)$ are 
\begin{displaymath}
R = \{ s_{(i,j)} \varepsilon_i^k \varepsilon_j^{-k} : 1 \le i \neq j \le n, 0 \le k \le m-1 \},
\end{displaymath}
\begin{displaymath}
S_i = \{ \varepsilon_j^i : 1 \le j \le n \}_{1 \le i \le m-1}.
\end{displaymath}  
The $G(m,1,n)$-conjugacy classes of reflections in $G(m,d,n)$ are
\begin{displaymath}
R = \{ s_{(i,j)} \varepsilon_i^k \varepsilon_j^{-k} : 1 \le i \neq j \le n, 0 \le k \le m-1 \},
\end{displaymath}
\begin{displaymath}
S_{id} = \{ \varepsilon_j^{id} : 1 \le j \le n \}_{1 \le i \le p-1}.
\end{displaymath}
The following is an application of \cite[Theorem 3]{16}. 

\begin{prop}
Let $n > 2$ or $n = 2$ and $d$ odd, then the $G(m,1,n)$-conjugacy classes of reflections in $G(m,d,n)$ coincide with the $G(m,d,n)$-conjugacy classes of reflections in $G(m,d,n)$. When $n = 2$ and $d$ is even the $G(m,d,2)$-conjugacy classes of reflections in $G(m,d,2)$ are
\begin{displaymath}
R_1 = \{ s_{(1,2)} \varepsilon_i^k \varepsilon_j^{-k} : 0 \le k \le m-1, k \textrm{ even} \}, \qquad R_2 = \{ s_{(1,2)} \varepsilon_i^k \varepsilon_j^{-k} : 0 \le k \le m-1, k \textrm{ odd} \},
\end{displaymath}
and 
\begin{displaymath}
S_{id} = \{ \varepsilon_j^{id} : 1 \le j \le n \}_{1 \le i \le p-1}.
\end{displaymath}
\end{prop}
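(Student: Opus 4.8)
The plan is to apply \cite[Theorem 3]{16} via the standard fact that, since $K = G(m,d,n)$ is normal in $W = G(m,1,n)$ with cyclic quotient $W/K \cong C_d$, any $W$-conjugacy class of reflections contained in $K$ (namely $R$ and the $S_{id}$) breaks up into exactly $[W : K \cdot C_W(x)]$ many $K$-conjugacy classes of equal size, for $x$ any representative. Writing $\phi \colon W \to W/K \cong C_d$ for the quotient map, which on an element with diagonal exponents $a_1, \dots, a_n$ is given by $\phi = \sum_i a_i \bmod d$ (this being preserved by the permutation part, so that $\phi$ is a homomorphism with kernel $K$), this number equals the index $[C_d : \phi(C_W(x))]$. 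Thus the whole proposition reduces to computing the image $\phi(C_W(x))$ of a centralizer for one reflection of each type.

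For a diagonal reflection $x = \varepsilon_1^{id}$, every diagonal matrix commutes with $x$ and every permutation fixing the index $1$ centralizes it; since a single diagonal exponent may be chosen freely, $\phi(C_W(x)) = C_d$, and the class $S_{id}$ never splits, for any $n$. For $x = s_{(1,2)} \varepsilon_1^k \varepsilon_2^{-k}$ of type $R$, a short computation shows that an element centralizes $x$ only if its underlying permutation stabilises $\{1,2\}$. When $n > 2$ the exponents attached to the indices $3, \dots, n$ are then unconstrained, so again $\phi(C_W(x)) = C_d$ and $R$ remains a single $K$-class. This settles every case with $n > 2$.

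The remaining work is the case $n = 2$, where the centralizer of $x$ has two parts: the diagonal elements force $a_1 \equiv a_2 \pmod m$, while the elements whose permutation part is $s_{(1,2)}$ force $a_2 - a_1 \equiv -2k \pmod m$; in either case $\phi = a_1 + a_2 \equiv 2a_1 \pmod d$, so $\phi(C_W(x))$ is the image of multiplication by $2$ in $C_d \cong \Z/d\Z$, a subgroup of index $\gcd(2,d)$. Hence $R$ stays a single class when $d$ is odd and splits into exactly two classes when $d$ is even. To identify the two pieces in the latter case I would check that $k \bmod 2$ is a $K$-conjugation invariant: conjugating $x$ by a diagonal element sends $k \mapsto k - (a_1 - a_2)$ and conjugating by a ``swap'' element sends $k \mapsto (a_1 - a_2) - k$, and when $d$ is even the constraint $a_1 + a_2 \equiv 0 \pmod d$ forces $a_1 - a_2$ to be even, so the parity of $k$ is preserved. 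Since $R = R_1 \sqcup R_2$ is then a decomposition into two nonempty $K$-stable subsets and we already know there are exactly two $K$-classes, $R_1$ and $R_2$ must be precisely those classes.

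I expect the main obstacle to be the careful bookkeeping of the centralizer of the type-$R$ reflection when $n = 2$ --- in particular computing conjugation by the anti-diagonal (``swap'') monomial matrices and verifying that the parity of $k$ is exactly the invariant separating $R_1$ from $R_2$ --- rather than any conceptual difficulty, since the Clifford-theoretic counting and the diagonal contributions are routine.
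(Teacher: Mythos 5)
Your argument is correct, but it is genuinely different in character from what the paper does: the paper gives no proof at all, simply asserting that the proposition ``is an application of \cite[Theorem 3]{16}'' (Read's classification of conjugacy classes in the imprimitive unitary reflection groups), whereas you supply a complete, self-contained verification. Your route --- the standard fact that a $W$-class $x^W$ contained in a normal subgroup $K$ splits into $[W:KC_W(x)]=[C_d:\phi(C_W(x))]$ equal-sized $K$-classes, followed by an explicit computation of $\phi(C_W(x))$ for each type of reflection --- is sound: the diagonal classes $S_{id}$ never split because $\phi$ is already surjective on the diagonal subgroup; for type $R$ with $n>2$ the elements $\varepsilon_3^a$ centralize $x$ and again force surjectivity; and for $n=2$ your centralizer conditions ($a_1\equiv a_2$ for diagonal centralizing elements, $a_2-a_1\equiv -2k$ for those with permutation part $s_{(1,2)}$) correctly pin $\phi(C_W(x))$ down to $2\,(\Z/d\Z)$, giving index $\gcd(2,d)$. (One cosmetic slip: in the swap case $a_1+a_2\equiv 2a_1-2k$, not $2a_1$, but this is still even, which is all you use.) Your identification of the two classes via the parity of $k$, using that $a_1+a_2\equiv 0 \bmod d$ with $d$ even forces $a_1-a_2$ even, is also correct, and the ``two nonempty $K$-stable pieces, exactly two classes'' conclusion closes the argument. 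What your approach buys is transparency and independence from \cite{16}; what the paper's citation buys is brevity and consistency with the literature it leans on elsewhere. Either is acceptable here.
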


\subsection{}\label{sec:unequal} The group $G(m,d,n)$ is a normal subgroup of $G(m,1,n)$ of index $d$ and the quotient group is the cyclic group $C_d$. Therefore we are in the situation considered in the previous sections. If $\mathbf{c}$ is a $G(m,d,n)$-conjugate invariant function on the set of reflections of that group then, provided $n \neq 2$ or $n = 2$ and $d$ is odd, $\mathbf{c}$ extends by zero to a $G(m,1,n)$-conjugate invariant function on the set of reflections of $G(m,1,n)$. If $n = 2$ and $d$ is even, we are restricted to considering $\mathbf{c}$ such that $\mathbf{c}(R_1) = \mathbf{c}(R_2)$. The group $C_d = \langle \varepsilon_1^p \rangle$ is a cyclic subgroup of $G(m,1,n)$ and normalises $G(m,d,n)$. If $d$ is co-prime to $p$ then $G(m,1,n) = G(m,d,n) \rtimes C_d$, an important example of this behaviour is $G(m,m,n) \triangleleft G(m,1,n)$. In such situations there exists an algebra isomorphism 
\begin{displaymath}
H_{t,\mathbf{c}}(G(m,1,n)) \cong H_{t,\mathbf{c}}(G(m,d,n)) \rtimes C_d.
\end{displaymath}
A specific example of this is $H_{t,(c,0)}(B_n) \cong H_{t,c}(D_n) \rtimes C_2$, where $B_n$ and $D_n$ are the Weyl groups of type $B$ and $D$ respectively (they correspond to $G(2,1,n)$ and $G(2,2,n)$).

\subsection{Representations of $G(m,d,n)$}\label{sec:reps} We begin by giving an explicit description of the simple $G(m,1,n)$-modules. This will allow us to give a combinatorial description of the action of the groups $C_d$ and $C_d^\vee$ as defined in (\ref{sec:Clifford}). Recall that a \textit{partition} of $n$ is a  sequence of positive integers $\lambda = (\lambda_1 \ge \lambda_2 \ge \ds \ge \lambda_k > 0)$ such that $n = | \lambda | := \sum_{i = 1}^k \lambda_k$. We call $k$ the \textit{length} of $\lambda$. The simple $S_n$-modules are parameterized by partitions of $n$. Let $V_{\lambda}$ denote the simple $S_n$-module labelled by the partition $\lambda$. The simple $C_m$-modules will be denoted $\C \cdot \omega_i$ (or simply $\omega_i$), $0 \le i < m$. If $C_m = \langle \varepsilon \rangle$ then $\varepsilon \cdot \omega_i = \zeta^i \omega_i$ (we may think of $C_m \subset G(m,1,n)$ such that $\varepsilon = \varepsilon_1$). Now let $U$ be any $C_m$-module and $V$ a $S_n$-module. The \textit{wreath product} $U \wr V$ is the $G(m,1,n)$-module, which as a vector space is $U^{\otimes n} \otimes V$ and whose module structure is uniquely defined by
\bdm
\varepsilon_i \cdot (u_1 \otimes \ds \otimes u_n \otimes v) = u_1 \otimes \ds \otimes \varepsilon \cdot u_i \otimes \ds \otimes u_n \otimes v,
\edm
and for $\sigma \in S_n$:
\bdm
\sigma \cdot (u_1 \otimes \ds \otimes u_n \otimes v) = u_{\sigma^{-1}(1)} \otimes \ds \otimes u_{\sigma^{-1}(n)} \otimes \sigma \cdot v.
\edm  
If $U$ and $V$ are simple modules then $U \wr V$ is a simple $G(m,1,n)$-module. However not every simple $G(m,1,n)$-module can be written in this way. A complete set of simple modules was originally constructed by Specht \cite{Specht}. The precise result is stated below, and a proof can be found in \cite[Theorem 4.3.34]{JK}. An $m$-multipartition $\underline{\lambda}$ of $n$ is an ordered $m$-tuple of partitions $(\lambda^0, \dots, \lambda^{m-1})$ such that $|\lambda^0| + \dots + |\lambda^{m-1}| = n$. Let $\mathcal{P}(m,n)$ denote the set of all $m$-multipartitions of $n$. To each $m$-tuple $n_0 + \ds + n_{m-1} = n$ there is a corresponding \textit{Young subgroup} $G_{(n)} = C_m \wr (S_{n_0} \times \ds \times S_{n_{m-1}})$ of $G(m,1,n)$. 

\begin{thm}\label{thm:Spetch}
To each $\underline{\lambda}$ in $\mathcal{P}(m,n)$ we can associate the $G(m,1,n)$-module
\bdm
V_{\underline{\lambda}} := \Ind_{G_{(n)}}^{G(m,1,n)} \, (\omega_0 \wr V_{\lambda^0}) \otimes \ds \otimes (\omega_{m-1} \wr V_{\lambda^{m-1}}),
\edm
where $G_{(n)}$ is the Young subgroup corresponding to the $m$-tuple $|\lambda^0| + \dots + |\lambda^{m-1}| = n$. Each $V_{\underline{\lambda}}$ is simple, $V_{\underline{\lambda}} \not\simeq V_{\underline{\mu}}$ for $\underline{\lambda} \neq \underline{\mu}$ and every simple $G(m,1,n)$-module is isomorphic to $V_{\underline{\lambda}}$ for some $\underline{\lambda}$.
\end{thm}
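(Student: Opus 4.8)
The plan is to apply the classical ``little group'' method of Clifford and Mackey to the semidirect product decomposition $G(m,1,n) = N \rtimes S_n$, where $N := A(m,1,n) \cong C_m^n$ is the abelian normal subgroup of diagonal matrices and $S_n$ permutes the factors. First I would describe the irreducible characters of $N$: since $N$ is abelian they are all one-dimensional, and are precisely the characters $\underline{\omega}_{\mathbf{a}} = \omega_{a_1} \otimes \ds \otimes \omega_{a_n}$ indexed by tuples $\mathbf{a} = (a_1, \ds, a_n) \in \{ 0, \ds, m-1\}^n$. The group $S_n$ permutes these characters through its action on the index tuples, so the $S_n$-orbits on the set of irreducible characters of $N$ are indexed by the multiplicities $(n_0, \ds, n_{m-1})$, where $n_j = \#\{ i : a_i = j\}$ and $n_0 + \ds + n_{m-1} = n$.

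Next I would fix, for each composition $(n_0, \ds, n_{m-1})$, the orbit representative $\underline{\omega}$ whose first $n_0$ entries equal $\omega_0$, the next $n_1$ entries equal $\omega_1$, and so on. The stabiliser of $\underline{\omega}$ in $S_n$ is then exactly the Young subgroup $S_{n_0} \times \ds \times S_{n_{m-1}}$, and the associated inertia subgroup in $G(m,1,n)$ is $I := N \rtimes (S_{n_0} \times \ds \times S_{n_{m-1}}) = G_{(n)}$. The \emph{key observation} — the point at which the wreath-product structure makes the argument clean — is that $\underline{\omega}$ extends to a one-dimensional character $\widetilde{\omega}$ of the whole inertia group $I$: because $\underline{\omega}$ is by construction invariant under the permutation part, the formula $\widetilde{\omega}(u \cdot s) = \underline{\omega}(u)$ for $u \in N$ and $s \in S_{n_0} \times \ds \times S_{n_{m-1}}$ is a well-defined group homomorphism. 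In particular there is no cohomological obstruction, and the little-group method for semidirect products with abelian kernel (the wreath-product instance carried out in \cite[Theorem 4.3.34]{JK}) gives a bijection between the irreducible $G(m,1,n)$-modules lying over the orbit of $\underline{\omega}$ and the irreducible modules of the quotient $I / N \cong S_{n_0} \times \ds \times S_{n_{m-1}}$, implemented by $\rho \mapsto \Ind_I^{G(m,1,n)} (\widetilde{\omega} \otimes \rho)$.

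It then remains to match this abstract parametrisation with the explicit construction in the statement. The irreducible modules of $S_{n_0} \times \ds \times S_{n_{m-1}}$ are exactly the outer tensor products $V_{\lambda^0} \otimes \ds \otimes V_{\lambda^{m-1}}$, indexed by tuples of partitions with $|\lambda^j| = n_j$; combining this datum with the orbit datum $(n_0, \ds, n_{m-1})$ yields precisely an $m$-multipartition $\underline{\lambda}$ of $n$. Finally I would check that, as an $I = G_{(n)}$-module, $(\omega_0 \wr V_{\lambda^0}) \otimes \ds \otimes (\omega_{m-1} \wr V_{\lambda^{m-1}})$ coincides with $\widetilde{\omega} \otimes (V_{\lambda^0} \otimes \ds \otimes V_{\lambda^{m-1}})$: each factor $\omega_j \wr V_{\lambda^j}$ is one-dimensional over the block $C_m^{n_j}$ of the base (on which it acts through $\omega_j$), so the permutation action on the identical tensor factors $\omega_j^{\otimes n_j}$ is trivial and only the action on $V_{\lambda^j}$ survives. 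Inducing up identifies this module with $V_{\underline{\lambda}}$, and irreducibility, the pairwise non-isomorphism $V_{\underline{\lambda}} \not\simeq V_{\underline{\mu}}$, and exhaustiveness are all immediate consequences of the Mackey bijection. I expect the main obstacle to be only the bookkeeping of this last identification; the genuinely delicate step that could fail for a general extension — the existence of the character extension $\widetilde{\omega}$ — is automatic here precisely because the normal subgroup is abelian and the representative character is chosen to be stabiliser-invariant.
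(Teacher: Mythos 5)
Your proof is correct and is precisely the standard little-group (Clifford--Mackey) argument for wreath products with abelian base; the paper itself gives no proof of this theorem, instead citing Specht and \cite[Theorem 4.3.34]{JK}, and the proof in that reference is essentially the argument you have written out. The one point worth double-checking in your bookkeeping step is the triviality of the permutation action on $\omega_j^{\otimes n_j}$, which holds because each factor is the \emph{same} one-dimensional module, so permuting the tensor factors fixes every vector.
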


\subsection{} Note that in the case $n_i = 0$, the module $\omega_i \wr V_{\lambda^i}$ should be regarded as the one-dimensional trivial module. An element of $G(m,1,n)$ can be thought of as a permutation matrix but with the unique $1$ in each row replaced by an element of $C_m$. The rule that takes each such matrix to the product of its non-zero entries defines a character $\delta' \, : \, G(m,1,n) \rightarrow \C^*$ (this is not the determinant of the matrix). Fix $\delta := (\delta')^p$. Then $C_d^\vee = \langle \delta \rangle$ and it follows from (\ref{sec:Clifford}) that $(\omega_i \wr V) \otimes \delta \simeq \omega_{i + p} \wr V$. If we define the action of $C_d^\vee$ on $\underline{\lambda}$ by
\bdm
\delta \cdot (\lambda^0, \dots, \lambda^{m-1}) = (\lambda^{m - p}, \lambda^{m+1-p}, \dots ,\lambda^{m-2},\lambda^{m-1},\lambda^0,\lambda^1, \dots, \lambda^{m-p-1}),
\edm
then Theorem \ref{thm:Spetch} implies that $\delta \cdot V_{\underline{\lambda}} = V_{\delta \cdot \underline{\lambda}}$. We denote the orbit $C_d^\vee \cdot \underline{\lambda}$ by $\{ \underline{\lambda} \}$. Since $(C^\vee_d / C^\vee_{\underline{\lambda}})^\vee \subset C_d$ is the stabilizer $C_{\mu}$ of $\mu$, an irreducible summand of $\Res_{G(m,d,n)}^{G(m,1,n)} \, \underline{\lambda}$, we see by Proposition \ref{prop:Clifford} that the set of all irreducible summands of $\Res_{G(m,d,n)}^{G(m,1,n)}  \, \underline{\lambda}$ is parametrized by elements of the quotient $C_d / C_{\mu}$. This quotient can be identified with $C^\vee_{\underline{\lambda}}$ hence irreducible representations of $G(m,d,n)$ are parameterized by distinct pairs $(\{ \underline{\lambda} \},\epsilon)$, where $\epsilon \in C^\vee_{\underline{\lambda}}$. If we fix $C_d = \langle \, \overline{\varepsilon^p_1} \, \rangle$ and define the bijection $C_d \leftrightarrow C^\vee_d$ by $(\overline{\varepsilon^p_1})^i \leftrightarrow \delta^i$ then $C_d / C_\mu \leftrightarrow C^\vee_\lambda$ and the action of $C_d$ on pairs $(\{ \underline{\lambda} \},\epsilon)$ is given by 
\bdm
\eta \cdot (\{ \underline{\lambda} \},\epsilon) = (\{ \underline{\lambda} \},\eta \cdot \epsilon) \quad \textrm{ where } \quad (\eta \cdot \epsilon)( \nu ) = \epsilon (\eta \nu), \quad \textrm{ for } \eta, \nu \in C_d.
\edm

\section{Combinatorics} 

\subsection{} In this section we apply Theorem \ref{thm:compare1} to the combinatorial description of the partition $\mathsf{CM}_{\mbf{c}}(G(m,1,n))$ given in \cite{12} and deduce a similar description of the partition $\mathsf{CM}_{\mbf{c}}(G(m,d,n))$. First we must introduce some combinatorial objects.

\subsection{Young diagrams and $\beta$-numbers} Let $\lambda$ be a partition of $n$ of length $k$. The \textit{Young diagram} of $\lambda$ is defined to be the subset $Y(\lambda) := \{ (a,b) \in \Z^2 \, | \, 1 \le a \le k, \, 1 \le b \le \lambda_a \}$ of $\Z^2$. Each box in the diagram is called a \textit{node} and the \textit{content} of a node $(a,b)$ is defined to be the integer $\cont(a,b) := b - a$. The Young diagram should be visualized as a stack of boxes, justified to the left; for example the partition $(3,2,2,1)$ with its content is: 
\bdm
\Yboxdim18pt
\young(\minustwo :::,\minusone 01:,0123)
\edm

\subsection{Residues}\label{subsection:residue} Given a partition $\lambda$, we define the \textit{residue} of $\lambda$ to be the Laurent polynomial in $\Z [ x^{\pm 1}]$ given by
\bdm
\textrm{Res}_{\lambda}(x) := \sum_{(a,b) \in Y(\lambda)} x^{\cont (a,b)}.
\edm
For $r \in \Z$, the \textit{$r$-shifted} residue of $\lambda$ is defined to be $\textrm{Res}_{\lambda}^r(x) := x^r \textrm{Res}_{\lambda}(x)$. Let $\underline{\lambda} \in \mc{P}(m,n)$ and fix $\mbf{r} \in \Z^m$. Then the \textit{$\mbf{r}$-shifted residue} of $\underline{\lambda}$ is defined to be
\bdm
\textrm{Res}^{\mbf{r}}_{\underline{\lambda}} (x) := \sum_{i = 0}^{m-1} \textrm{Res}_{\lambda^i}^{r_i} (x).
\edm   

\subsection{}\label{subsection:translate} In order to use the combinatorics described in \cite{12} and \cite{Mo} we must change the basis of our parameter space. Recall that we have labelled the conjugacy classes of complex reflections in $G(m,1,n)$ as $R$ and $S_i$. We fix $\mathbf{c}(R) = k$ and $\mathbf{c}(S_i) = c_i$. The parameters of the rational Cherednik algebra $H_{\mbf{c}}(G(m,1,n))$ as used in \cite{12} are $\mbf{h} = (h,H_0, \dots , H_{m-1})$. We wish to find an expression for these parameters in terms of $k$ and $c_1, \dots ,c_{m-1}$. For the remainder of this section we make the assumption that $k \neq 0$. Without loss of generality $k = -1$. The parameter $H_0$ is chosen so that $H_0 + H_1 + \dots + H_{m-1} = 0$. Recall that $\zeta$ is a primitive $m^{th}$ root of unity. By \cite[(2.7)]{10} we know that $h = k$ and 
\begin{displaymath}
c_i = \sum_{j = 0}^{m-1}  \zeta^{-ij} H_j.
\end{displaymath}
Noting that 
\begin{displaymath}
\sum_{i = 1}^{m-1}  \zeta^{-i(r + j)} =  \left\{ \begin{array}{lcl}
m-1 & & \textrm{if } r + j \equiv 0 \, \mod \, m\\
-1 & & \textrm{otherwise},
\end{array} \right. 
\end{displaymath}
we have for $1 \le r \le m-1$:
\begin{displaymath}
\zeta^{-r} c_1 + \zeta^{-2r} c_2 + \dots + \zeta^{-(m-1)r} c_{m-1} = \sum_{i = 1}^{m-1} \zeta^{-ri} \sum_{j = 0}^{m-1} \zeta^{-ij} H_j \qquad 
\end{displaymath}
\begin{displaymath}
 \qquad = \sum_{j = 0}^{m-1} H_j \sum_{i = 1}^{m-1} \zeta^{-i(r + j)} = (m-1)H_{m - r} - \sum_{\substack{j = 0 \\ j \neq m - r}}^{m-1} H_j = mH_{m-r}.
\end{displaymath}
Thus for $1 \le r \le m-1$: 
\begin{displaymath}
H_r = \frac{1}{m} \sum_{i = 1}^{m-1}  \zeta^{-i(m - r)} c_i = \frac{1}{m} \sum_{i = 1}^{m-1}  \zeta^{ir} c_i.
\end{displaymath}

\subsection{The Calogero-Moser partition for $C_m \wr S_n$}\label{thm:Mo}
The results in \cite{12} and \cite{Mo} are only valid for rational values of $\mbf{h}$. Therefore, for the remainder of this chapter, we restrict to those parameters $\mbf{c}$ for $G(m,1,n)$ such that $\mbf{h} = (-1, H_0, H_1, \ds, H_{m-1}) \in \Q^{m+1}$. Choose $e \in \N$ such that $e H_i \in \Z$ for all $0 \le i \le m-1$ and fix 
\bdm
\mbf{s} = (0, eH_1, e H_1 + eH_2, \ds, e H_1 + \ds + e H_{m-1}) \in \Z^m.
\edm
Combining \cite[Theorem 2.5]{12} with the wonderful, but difficult combinatorial result \cite[Theorem 3.13]{Mo} gives:

\begin{thm}
The multipartitions $\underline{\lambda}, \underline{\mu} \in \mc{P}(m,n)$ belong to the same partition of $\mathsf{CM}_{\mbf{c}}(G(m,1,n))$ if and only if
\bdm
\textrm{Res}^{\mbf{s}}_{\underline{\lambda}}(x^e) = \textrm{Res}^{\mbf{s}}_{\underline{\mu}}(x^e).
\edm
\end{thm}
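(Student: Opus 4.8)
The plan is to read the statement as a direct amalgamation of the two cited results, with the parameter dictionary of (\ref{subsection:translate}) serving as the bridge between them. First I would recall the precise form of \cite[Theorem 2.5]{12}: it gives a purely combinatorial description of the partition $\mathsf{CM}_{\mbf{c}}(G(m,1,n))$ in terms of the Gordon--Martino parameters $\mbf{h} = (h,H_0,\ds,H_{m-1})$, declaring two multipartitions to lie in the same block exactly when a certain charged invariant built from the $H_i$ and the contents of the components $\lambda^i$ agrees. Since (\ref{subsection:translate}) has already expressed $\mbf{h}$ in terms of $\mbf{c}$ via $h = k = -1$ and $H_r = \frac{1}{m}\sum_{i=1}^{m-1}\zeta^{ir}c_i$, this criterion is available for the parameter $\mbf{c}$ under consideration.

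Next I would invoke the hard combinatorial identity \cite[Theorem 3.13]{Mo}, whose content is precisely that this charged invariant coincides with the $\mbf{s}$-shifted residue $\textrm{Res}^{\mbf{s}}_{\underline{\lambda}}$. The remaining work is to match the shift and the scaling. The vector $\mbf{s} = (0, eH_1, eH_1 + eH_2, \ds, eH_1 + \ds + eH_{m-1})$ is the sequence of partial sums of the integers $eH_i$, and I would check that this is exactly the charge appearing in \cite[Theorem 3.13]{Mo}, up to the indexing and sign conventions fixed in (\ref{subsection:residue}). Because $\mbf{h}\in\Q^{m+1}$ rather than $\Z^{m+1}$, the residues are a priori only generating functions with rational exponents; clearing the common denominator $e$ (chosen so that every $eH_i\in\Z$) makes the defining data honest integers, and the substitution $x\mapsto x^e$ rescales the content grading so that $\textrm{Res}^{\mbf{s}}_{\underline{\lambda}}(x^e)$ lands in $\Z[x^{\pm 1}]$. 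Equality of these rescaled residues is then equivalent to equality of the unscaled invariants of \cite{12}.

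Combining the two equivalences yields the theorem: $\underline{\lambda}$ and $\underline{\mu}$ lie in the same part of $\mathsf{CM}_{\mbf{c}}(G(m,1,n))$ if and only if their Gordon--Martino invariants agree, by \cite[Theorem 2.5]{12}, if and only if $\textrm{Res}^{\mbf{s}}_{\underline{\lambda}}(x^e) = \textrm{Res}^{\mbf{s}}_{\underline{\mu}}(x^e)$, by \cite[Theorem 3.13]{Mo}.

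I expect the main obstacle to be entirely one of reconciling conventions across the three sources, rather than any new mathematical idea. The delicate points are: (i) verifying that the cumulative-sum shift $\mbf{s}$ assembled here from the $eH_i$ is literally the charge used in \cite[Theorem 3.13]{Mo}, including the placement of the leading $0$ and the ordering of the $\lambda^i$; (ii) confirming that the content normalization $\cont(a,b)=b-a$ and the $r$-shift $\textrm{Res}^r_\lambda(x)=x^r\,\textrm{Res}_\lambda(x)$ fixed in (\ref{subsection:residue}) match those implicit in \cite{12} and \cite{Mo}; and (iii) tracking the denominator-clearing so that the passage to $x^e$ absorbs the rational exponents without altering the block relation. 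None of these require more than careful bookkeeping, since all of the genuinely difficult combinatorial content is already carried by \cite[Theorem 3.13]{Mo}.
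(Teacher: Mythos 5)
Your proposal matches the paper exactly: the theorem is presented there as the direct combination of \cite[Theorem 2.5]{12} with \cite[Theorem 3.13]{Mo}, mediated by the parameter translation of (\ref{subsection:translate}) and the choice of $e$ and $\mbf{s}$, with no further argument given. Your additional attention to the convention-matching (charge placement, content normalization, denominator clearing) is sound bookkeeping that the paper leaves implicit.
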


\subsection{}\label{lem:cyclicparameters} The $G(m,1,n)$-conjugacy classes of $G(m,d,n)$ are $R$ and $S_{id}$, where $1 \le i \le p - 1$. Thus a parameter $\mbf{c}$ for $G(m,1,n)$ is an extension by zero of a parameter for $G(m,d,n)$ if and only if $c_i = 0$ for all $i \not\equiv 0 \, \mod \, d$. Let us therefore assume that $c_i = 0$ for $i \not\equiv 0 \, \mod \, d$. 
\begin{lem}
We have $c_i = 0$ for all $i \not \equiv 0 \, \mod \, d$ if and only if $H_{i + p} = H_i$ for all $i$.
\end{lem}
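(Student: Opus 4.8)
The plan is to recognise the lemma as the statement that, under the discrete Fourier transform on $\Z/m\Z$, a vector is supported on the multiples of $d$ in ``frequency space'' if and only if it is $p$-periodic in ``position space''. Concretely, the formulas of (\ref{subsection:translate}) read $c_i = \sum_{j=0}^{m-1}\zeta^{-ij}H_j$, and I would first observe that this identity also holds for $i = 0$: the normalisation $H_0 + \ds + H_{m-1} = 0$ precisely says $c_0 = 0$. Thus $(c_0,\ds,c_{m-1})$ is the Fourier transform of $(H_0,\ds,H_{m-1})$, and the inversion formula $H_j = \frac{1}{m}\sum_{i=0}^{m-1}\zeta^{ij}c_i$ holds, where all indices are read modulo $m$. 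The condition ``$c_i = 0$ for $i \not\equiv 0 \bmod d$'' says the transform is supported on $\{0,d,2d,\ds,(p-1)d\}$, while ``$H_{i+p} = H_i$ for all $i$'' says $H$ is invariant under the shift by $p$; both conditions cut out a $p$-dimensional subspace, and I shall show each implies the other by direct computation.

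The computational heart is the elementary identity that, for any integer $a$,
\bdm
\sum_{t=0}^{d-1}\zeta^{apt} = \begin{cases} d & \text{if } d \mid a,\\ 0 & \text{otherwise,}\end{cases}
\edm
which holds because $\zeta^{ap}$ is a $d$-th root of unity (as $pd = m$) that equals $1$ exactly when $d \mid a$. For the implication ``$H_{i+p}=H_i \Rightarrow c_i = 0$'', I would use $p$-periodicity to group the defining sum for $c_i$ according to the residue of $j$ modulo $p$, writing $j = s + pt$ with $0 \le s < p$ and $0 \le t < d$; periodicity gives $H_{s+pt} = H_s$, so $c_i = \sum_{s=0}^{p-1}\zeta^{-is}H_s\big(\sum_{t=0}^{d-1}\zeta^{-ipt}\big)$, and the inner sum vanishes unless $d \mid i$ by the displayed identity.

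For the reverse implication I would instead feed the support condition into the inversion formula: if $c_i = 0$ whenever $d \nmid i$, then only the indices $i = d\ell$ with $0 \le \ell < p$ contribute, so $H_j = \frac{1}{m}\sum_{\ell=0}^{p-1}c_{d\ell}\zeta^{d\ell j}$, and since $\zeta^{d\ell p} = \zeta^{m\ell} = 1$ each summand is unchanged when $j$ is replaced by $j+p$, giving $H_{j+p} = H_j$. I expect no genuine obstacle here: the only points requiring care are the bookkeeping of indices modulo $m$ and the observation that the normalisation $\sum_j H_j = 0$ is exactly what makes $c_0 = 0$, so that the Fourier picture is internally consistent and the case $i = 0$ needs no separate treatment.
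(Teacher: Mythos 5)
Your proof is correct and follows essentially the same route as the paper: both directions are the same computations (the inversion formula restricted to indices divisible by $d$ together with $\zeta^{d\ell p}=1$ for one direction, and grouping $j=s+pt$ and the geometric-sum identity for the other). The explicit framing as a discrete Fourier duality on $\Z/m\Z$, and the observation that the normalisation $\sum_j H_j=0$ is exactly $c_0=0$, are pleasant glosses but do not change the argument.
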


\begin{proof}
First assume that $c_i = 0$ for all $i \not \equiv 0 \, \mod \, d$. Then
\begin{displaymath}
H_{i + p} = \frac{1}{m} \sum_{r = 1}^{p-1}  \zeta^{dr(i + p)} c_{dr} = \frac{1}{m} \sum_{r = 1}^{p-1}  \zeta^{dri} c_{dr} = H_i.
\end{displaymath}
Conversely, if $H_{i + p} = H_i$ for all $i$ then 
\begin{displaymath}
c_i = \sum_{j = 0}^{m-1}  \zeta^{-ij} H_j = \sum_{j = 0}^{p-1} H_j \sum_{r = 0}^{d-1} \zeta^{-i(j + rp)}.
\end{displaymath}
The result now follows from
\begin{displaymath}
\sum_{r = 0}^{d-1} \zeta^{-i(j + rp)} = \zeta^{-ij} \sum_{r = 0}^{d-1}  (\zeta^{-ip})^r =  \left\{ \begin{array}{lcl}
d \zeta^{-ij} & & \textrm{if } i \equiv 0 \, \mod \, d\\
0 & & \textrm{otherwise}.
\end{array} \right. 
\end{displaymath}
\end{proof}

\subsection{}\label{lem:Sdorbit} We will say that the parameter $\mbf{h} = (-1,H_0, \ds , H_{m-1})$ is \textit{$p$-cyclic} if $H_{i + p} = H_i$ for all $i$. Let $\underline{\lambda} = (\lambda^0, \dots , \lambda^{m-1})$ be an $m$-partition of $n$. We rewrite $\underline{\lambda}$ as $\underline{\lambda} = (\underline{\lambda}_0, \dots , \underline{\lambda}_{d-1})$ where $\underline{\lambda}_i = (\lambda^{ip}, \dots , \lambda^{(i+1)p - 1})$. Now the action of $C^\vee_d$ on $\underline{\lambda}$ as defined in (\ref{sec:reps}) can be expressed as 
\bdm
\delta \cdot (\underline{\lambda}_0, \ds , \underline{\lambda}_{d-1}) = (\underline{\lambda}_{d-1}, \underline{\lambda}_{0}, \ds , \underline{\lambda}_{d-2}).
\edm
An $m$-multipartition of $n$ is called \textit{$d$-stuttering} if $\underline{\lambda}_i = \underline{\lambda}_j$ for all $0 \le i,j \le d-1$. The group $C_d^\vee$ can be considered as a subgroup of $\mf{S}_d$, the symmetric group on $d$ elements, acting on $\mc{P}(m,n)$ as:
\bdm
\sigma \cdot (\underline{\lambda}_0, \ds , \underline{\lambda}_{d-1}) = (\underline{\lambda}_{\sigma(0)}, \ds , \underline{\lambda}_{\sigma(d-1)}).
\edm

\begin{lem}
Let $\mbf{c}$ be a parameter for $G(m,1,n)$ such that $\mbf{h} \in \Q^{m+1}$ is $p$-cyclic. Then the partitions of $\mathsf{CM}_{\mbf{c}}(G(m,1,n))$ consist of $\mf{S}_d$-orbits since
\bdm
\textrm{Res}^{\mbf{s}}_{\underline{\lambda}}(x^e) = \textrm{Res}^{\mbf{s}}_{\sigma \cdot \underline{\lambda}}(x^e),
\edm
where $\underline{\lambda} \in \mc{P}(m,n)$, $\sigma \in \mf{S}_d$ and $\mbf{s}$ is defined in (\ref{thm:Mo}).
\end{lem}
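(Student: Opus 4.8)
The plan is to reduce everything to the combinatorial criterion recorded in (\ref{thm:Mo}): since two multipartitions lie in the same partition of $\mathsf{CM}_{\mbf{c}}(G(m,1,n))$ precisely when their $\mbf{s}$-shifted residues (evaluated at $x^e$) agree, it suffices to prove the displayed identity $\textrm{Res}^{\mbf{s}}_{\underline{\lambda}}(x^e) = \textrm{Res}^{\mbf{s}}_{\sigma\cdot\underline{\lambda}}(x^e)$ for every $\sigma \in \mf{S}_d$. Once this is established, the $\mf{S}_d$-orbit of any $\underline{\lambda}$ is contained in its Calogero--Moser partition, and hence each such partition is a union of $\mf{S}_d$-orbits, which is the assertion.

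The key step is to show that the $p$-cyclic hypothesis forces the shift vector $\mbf{s}$ itself to be $p$-periodic. Writing $s_j = e(H_1 + \cdots + H_j)$, I would first observe that $s_{j+p} - s_j = e(H_{j+1} + \cdots + H_{j+p})$ is $e$ times the sum of $p$ consecutive entries of the $p$-periodic sequence $H$, hence equals $e\,\Sigma$ where $\Sigma := H_0 + \cdots + H_{p-1}$. The point is that $\Sigma$ vanishes: because $m = pd$ and $H$ repeats its first period $d$ times, the normalization $H_0 + \cdots + H_{m-1} = 0$ from (\ref{subsection:translate}) reads $d\,\Sigma = 0$, so $\Sigma = 0$. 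Consequently $s_{j+p} = s_j$ for all $j$, and writing $j = ip + r$ with $0 \le r < p$ gives $s_{ip+r} = s_r$; that is, the shift depends only on the residue $r$ of the index modulo $p$.

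With this in hand the identity is just a reindexing. Decomposing the index set into the $d$ windows of length $p$, the block description in (\ref{lem:Sdorbit}) gives $(\sigma\cdot\underline{\lambda})^{ip+r} = \lambda^{\sigma(i)p+r}$, so
\bdm
\textrm{Res}^{\mbf{s}}_{\sigma\cdot\underline{\lambda}}(x^e) = \sum_{i=0}^{d-1}\sum_{r=0}^{p-1} x^{e s_{ip+r}} \textrm{Res}_{\lambda^{\sigma(i)p+r}}(x^e) = \sum_{i=0}^{d-1}\sum_{r=0}^{p-1} x^{e s_r} \textrm{Res}_{\lambda^{\sigma(i)p+r}}(x^e),
\edm
where the second equality uses the $p$-periodicity of $\mbf{s}$ to replace $s_{ip+r}$ by $s_r$. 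Since $s_r$ no longer depends on $i$ and $\sigma$ merely permutes the index $i$ over $\{0,\ds,d-1\}$, the substitution $i' = \sigma(i)$ turns the double sum back into $\sum_{i'}\sum_r x^{e s_{r}}\textrm{Res}_{\lambda^{i'p+r}}(x^e) = \textrm{Res}^{\mbf{s}}_{\underline{\lambda}}(x^e)$, as required.

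The only real content is the vanishing of $\Sigma$; everything else is bookkeeping with the windows. I expect the main subtlety to be precisely that this vanishing relies on both hypotheses at once --- the $p$-periodicity of $H$ \emph{and} the normalization $\sum_j H_j = 0$ --- so that all $d$ blocks carry a common shift. Without the normalization the windows would be displaced by differing global amounts and the residues would not match, so I would be careful to flag this joint dependence explicitly.
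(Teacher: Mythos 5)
Your proof is correct and follows essentially the same route as the paper's: both reduce to showing that the $p$-cyclic hypothesis forces $\mbf{s}$ to be $p$-periodic, so that $\textrm{Res}^{\mbf{s}}_{\underline{\lambda}}(x^e)$ decomposes as a sum over the $d$ windows which $\mf{S}_d$ merely reorders. You in fact supply a detail the paper leaves implicit --- that the period sum $H_0 + \cdots + H_{p-1}$ vanishes only because of the normalization $H_0 + \cdots + H_{m-1} = 0$ combined with $p$-periodicity --- which is a worthwhile clarification.
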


\begin{proof}
If $\mbf{h}$ is $p$-cyclic then the corresponding parameter $\mbf{s}$ has the form 
\bdm
\mbf{s} = (\mbf{s}' , \ds , \mbf{s}') \quad \textrm{ where } \quad \mbf{s}' = (0, eH_1, e H_1 + eH_2, \ds, e H_1 + \ds + e H_{p-1}),
\edm
and thus 
\bdm
\textrm{Res}^{\mbf{s}}_{\underline{\lambda}}(x^e) = \sum_{i = 0}^{d-1} \textrm{Res}^{\mbf{s}'}_{\underline{\lambda}_i}(x^e) \qquad \forall \, \underline{\lambda} \in \mc{P}(m,n) .
\edm  
Since the action of $\mf{S}_d$ simply reorders this sum, the result is clear.
\end{proof}

\subsection{}\label{lem:primedivisor} The following technical result will be needed later.

\begin{lem}
Let $\mbf{h}$ be a $p$-cyclic parameter and choose $\underline{\lambda} \in \mc{P}(m,n)$ to be a non $d$-stuttering $m$-multipartition of $n$. For each prime divisor $q$ of $d$, there exists an $m$-multipartition $\underline{\lambda}(q)$ of $n$ such that $\underline{\lambda}$ and $\underline{\lambda}(q)$ belong to the same partition of $\mathsf{CM}_{\mbf{c}}(G(m,1,n))$ and the order of the stabilizer of $\underline{\lambda}(q)$ under the action of $C^\vee_d$ is not divisible by $q$.
\end{lem}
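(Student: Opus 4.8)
The plan is to translate the whole statement into a combinatorial assertion about cyclic rearrangements of the $d$-tuple of $p$-multipartitions. Writing $\underline{\lambda} = (\underline{\lambda}_0, \ds, \underline{\lambda}_{d-1})$ as in (\ref{lem:Sdorbit}), the group $C_d^\vee$ acts by cyclic rotation of the $d$ blocks, and the stabilizer $C_{\underline{\lambda}}^\vee$ is exactly the subgroup of $\Z/d$ fixing the tuple; being a subgroup of a cyclic group it is $\langle \ell_0 \rangle$, where $\ell_0 \mid d$ is the minimal period, so its order is $r = d/\ell_0$. The first thing I would record is that $r$ divides $g := \gcd(n_1, \ds, n_t)$, where $n_1, \ds, n_t$ are the multiplicities of the distinct values occurring among $\underline{\lambda}_0, \ds, \underline{\lambda}_{d-1}$ (and $t \ge 2$, since $\underline{\lambda}$ is non $d$-stuttering): a tuple of minimal period $\ell_0$ is the $r$-fold repetition of a fundamental block, so each $n_j$ is a multiple of $r$.

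The key reformulation is that, since $q \mid d$ and $\Z/d$ is cyclic with a unique subgroup of order $q$, namely $\langle d/q \rangle$, one has $q \mid r$ if and only if the tuple is invariant under rotation by $d/q$, i.e. constant on each of the $d/q$ orbits (each of size $q$) of the rotation-by-$(d/q)$ action on the positions $\Z/d$. Throughout I invoke Lemma \ref{lem:Sdorbit}: because $\mbf{h}$ is $p$-cyclic, the whole $\mf{S}_d$-orbit of $\underline{\lambda}$ — that is, every rearrangement of the blocks $\underline{\lambda}_i$ — lies in the same partition of $\mathsf{CM}_{\mbf{c}}(G(m,1,n))$, since such a rearrangement leaves $\textrm{Res}^{\mbf{s}}_{\underline{\lambda}}(x^e)$ unchanged. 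I am therefore free to permute blocks at will when producing $\underline{\lambda}(q)$, and I split into two cases according to whether $q \mid g$.

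If $q \nmid g$, then $q \nmid n_{j_0}$ for some $j_0$; as a monochromatic orbit contributes a multiple of $q$ to each value count, no rearrangement can be invariant under rotation by $d/q$. Hence every rearrangement — in particular $\underline{\lambda}$ itself — already has $q \nmid r$, and I take $\underline{\lambda}(q) = \underline{\lambda}$. If instead $q \mid g$, then $q \mid n_j$ for all $j$, which with $t \ge 2$ forces $d \ge 2q$, so there are at least two orbits; here invariant rearrangements do exist, and the construction is the heart of the argument. I start from any invariant rearrangement, so that two orbits are filled, respectively, with blocks all equal to distinct values $a \ne b$, and then swap one block equal to $a$ with a block equal to $b$ lying in the other orbit. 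The orbit that loses the swapped block still contains $q-1 \ge 1$ blocks equal to $a$ together with the new block equal to $b$, so it is no longer constant; thus $\underline{\lambda}(q)$ is not invariant under rotation by $d/q$, giving $q \nmid r$, and since $\underline{\lambda}(q)$ is a rearrangement of $\underline{\lambda}$, Lemma \ref{lem:Sdorbit} places it in the same Calogero--Moser partition.

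The main obstacle I anticipate is the case $q \mid g$: one must check both that an invariant rearrangement exists (so the swap has a starting point) and that a single transposition across two orbits genuinely removes the factor $q$ from the stabilizer order while preserving the multiset of blocks, and hence the residue. The remainder is bookkeeping, namely identifying the stabilizer order with $d/\ell_0$ and verifying the equivalence $q \mid r \Leftrightarrow$ invariance under rotation by $d/q$ through the uniqueness of the order-$q$ subgroup of $\Z/d$.
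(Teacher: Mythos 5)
Your proposal is correct, and it follows the same basic strategy as the paper -- reduce everything to the $\mf{S}_d$-action on the $d$-tuple of blocks via Lemma \ref{lem:Sdorbit}, then exhibit a block-permutation of $\underline{\lambda}$ not fixed by the unique order-$q$ subgroup of $C_d^\vee$ -- but the execution is genuinely different and, in fact, more careful. The paper (following Kim) gives a one-line recipe: pick $i>0$ with $\underline{\lambda}_i\neq\underline{\lambda}_0$, set $l=d/q$, and swap the blocks in positions $i$ and $l-1$. You instead split on whether $q$ divides $g=\gcd(n_1,\ds,n_t)$ of the block multiplicities: if not, \emph{no} rearrangement can be constant on the size-$q$ orbits of rotation by $d/q$, so $\underline{\lambda}$ itself already works; if so, you build an invariant rearrangement and break it with one transposition across two differently-coloured orbits. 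This buys you robustness: the paper's literal recipe can fail on edge cases -- for $d=4$, $q=2$, $\underline{\lambda}=(a,b,a,b)$ with $a\neq b$, the prescribed transposition either is the identity or swaps two equal blocks, leaving a tuple still fixed by rotation by $2$ -- whereas your case analysis handles such tuples correctly (here $q\mid g$, and your swap produces $(b,a,a,b)$, which has trivial stabilizer). The cost is a longer argument, and your preliminary observation that $r\mid g$ is never actually used, but the two load-bearing facts you isolate -- that $q\mid|\mathrm{Stab}|$ is equivalent to invariance under rotation by $d/q$ by uniqueness of the order-$q$ subgroup of a cyclic group, and that invariance forces every multiplicity to be a multiple of $q$ -- are exactly the right ones, and both cases are fully discharged in your write-up.
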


\begin{proof}
We follow the argument given in \cite[Lemma 3.5]{14}. Since $\underline{\lambda}$ is not $d$-stuttering, there exists an $i > 0$ such that $\underline{\lambda}_i \neq \underline{\lambda}_0$. If $d = q$ there is nothing to prove so assume $d > q$ and set $l = d/q$, $l > 1$. Let $\sigma$ be the transposition in $\mf{S}_d$ that swaps $\underline{\lambda}_i$ and $\underline{\lambda}_{l-1}$ in $\underline{\lambda}$. We set $\underline{\lambda}(q) = \sigma \cdot \underline{\lambda}$. Then $\underline{\lambda}(q)$ is not fixed by any of the generators of the unique subgroup of $C^\vee_d$ of order $q$ and hence the stablizer subgroup of $\underline{\lambda}(q)$ has order co-prime to $q$. Since $\underline{\lambda}$ and $\underline{\lambda}(q)$ are in the same $\mf{S}_d$-orbit, Lemma \ref{lem:Sdorbit} says that they are in the same partition of $\mathsf{CM}_{\mbf{c}}(G(m,1,n))$. 
\end{proof}

\subsection{}\label{lem:dstuttering} We will also require the following result.

\begin{lem}
Let $\mbf{c}$ be a parameter for $G(m,1,n)$ such that $\mbf{h} \in \Q^{m+1}$ is $p$-cyclic and choose $\underline{\lambda} \in \mc{P}(m,n)$ to be $d$-stuttering. If $\{ \underline{\lambda} \}$ is not a partition of $\mathsf{CM}_{\mbf{c}}(G(m,1,n))$ then there exists a non $d$-stuttering $m$-multipartition $\underline{\mu}$ that is in the same partition as $\underline{\lambda}$. 
\end{lem}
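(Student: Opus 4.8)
The plan is to work entirely with the combinatorial criterion of (\ref{thm:Mo}): two multipartitions lie in the same partition of $\mathsf{CM}_{\mbf{c}}(G(m,1,n))$ precisely when their $\mbf{s}$-shifted residues agree after the substitution $x \mapsto x^e$. First I would record the consequence of the $p$-cyclic hypothesis established in the proof of Lemma \ref{lem:Sdorbit}: writing $\underline{\lambda} = (\underline{\lambda}_0, \ds, \underline{\lambda}_{d-1})$ as in (\ref{lem:Sdorbit}) and $\mbf{s} = (\mbf{s}', \ds, \mbf{s}')$, one has
\bdm
\textrm{Res}^{\mbf{s}}_{\underline{\lambda}}(x^e) = \sum_{i = 0}^{d-1} \textrm{Res}^{\mbf{s}'}_{\underline{\lambda}_i}(x^e).
\edm
Since $\underline{\lambda}$ is $d$-stuttering, every $\underline{\lambda}_i$ equals $\underline{\lambda}_0$, so the right-hand side is $d \cdot \textrm{Res}^{\mbf{s}'}_{\underline{\lambda}_0}(x^e)$. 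The orbit $\{\underline{\lambda}\} = C^\vee_d \cdot \underline{\lambda}$ is the singleton $\{\underline{\lambda}\}$, because the cyclic shift fixes a $d$-stuttering multipartition. The hypothesis that $\{\underline{\lambda}\}$ is not a partition of $\mathsf{CM}_{\mbf{c}}(G(m,1,n))$, together with the fact (Lemma \ref{lem:Sdorbit}) that each partition is a union of such orbits, then guarantees that the partition containing $\underline{\lambda}$ has a further element $\underline{\nu} \neq \underline{\lambda}$ with $\textrm{Res}^{\mbf{s}}_{\underline{\nu}}(x^e) = \textrm{Res}^{\mbf{s}}_{\underline{\lambda}}(x^e)$.

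Next I would split into two cases according to whether this $\underline{\nu}$ is $d$-stuttering. If $\underline{\nu}$ is \emph{not} $d$-stuttering we are done immediately by taking $\underline{\mu} = \underline{\nu}$. So assume $\underline{\nu}$ is $d$-stuttering; then the displayed decomposition gives $\textrm{Res}^{\mbf{s}}_{\underline{\nu}}(x^e) = d \cdot \textrm{Res}^{\mbf{s}'}_{\underline{\nu}_0}(x^e)$, and comparing with the value for $\underline{\lambda}$ and cancelling $d$ yields $\textrm{Res}^{\mbf{s}'}_{\underline{\nu}_0}(x^e) = \textrm{Res}^{\mbf{s}'}_{\underline{\lambda}_0}(x^e)$. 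Since $\underline{\nu}$ and $\underline{\lambda}$ are both $d$-stuttering and distinct, their constituent blocks satisfy $\underline{\nu}_0 \neq \underline{\lambda}_0$.

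Finally I would construct the required $\underline{\mu}$ by splicing: set $\underline{\mu}_0 = \underline{\nu}_0$ and $\underline{\mu}_i = \underline{\lambda}_0$ for $1 \le i \le d-1$, and let $\underline{\mu} = (\underline{\mu}_0, \ds, \underline{\mu}_{d-1})$. Because $|\underline{\nu}_0| = |\underline{\lambda}_0| = n/d$, this is a genuine $m$-multipartition of $n$; it is non $d$-stuttering since $\underline{\mu}_0 = \underline{\nu}_0 \neq \underline{\lambda}_0 = \underline{\mu}_1$ (here $d \geq 2$). Using additivity of the residue over blocks together with the equality $\textrm{Res}^{\mbf{s}'}_{\underline{\nu}_0}(x^e) = \textrm{Res}^{\mbf{s}'}_{\underline{\lambda}_0}(x^e)$ gives
\bdm
\textrm{Res}^{\mbf{s}}_{\underline{\mu}}(x^e) = \textrm{Res}^{\mbf{s}'}_{\underline{\nu}_0}(x^e) + (d-1)\,\textrm{Res}^{\mbf{s}'}_{\underline{\lambda}_0}(x^e) = d \cdot \textrm{Res}^{\mbf{s}'}_{\underline{\lambda}_0}(x^e) = \textrm{Res}^{\mbf{s}}_{\underline{\lambda}}(x^e),
\edm
so $\underline{\mu}$ lies in the same partition as $\underline{\lambda}$ by (\ref{thm:Mo}). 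The one genuinely substantive point -- the rest being bookkeeping with the criterion -- is the case where \emph{every} other member of the partition of $\underline{\lambda}$ is itself $d$-stuttering; there the non $d$-stuttering $\underline{\mu}$ does not appear automatically and must be manufactured by the splicing above, which works precisely because the shifted residue is additive across the $d$ equal blocks cut out by the $p$-cyclic condition.
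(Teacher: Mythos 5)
Your argument is correct and follows essentially the same route as the paper: take a second multipartition $\underline{\lambda}'$ in the block, reduce to the case where it is also $d$-stuttering, cancel the factor $d$ in the residue identity coming from $p$-cyclicity to get $\textrm{Res}^{\mbf{s}'}_{\underline{\lambda}_0}(x^e) = \textrm{Res}^{\mbf{s}'}_{(\underline{\lambda}')_0}(x^e)$, and then splice one block of $\underline{\lambda}'$ into $\underline{\lambda}$ to manufacture a non $d$-stuttering member of the same Calogero--Moser class. The only cosmetic difference is where you insert the foreign block (position $0$ rather than position $1$), which is immaterial.
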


\begin{proof}
Since $\{ \underline{\lambda} \}$ is not partition of $\mathsf{CM}_{\mbf{c}}(G(m,1,n))$ there must exist an $m$-multipartition $\underline{\lambda}' \neq \underline{\lambda}$ that is in the same partition as $\underline{\lambda}$. If $\underline{\lambda}'$ is not $d$-stuttering then we are done. Therefore we assume that $\underline{\lambda}'$ is $d$-stuttering. As noted in the proof of Lemma \ref{lem:Sdorbit}, $\mbf{h}$ being $p$-cyclic implies that 
\bdm
\textrm{Res}^{\mbf{s}}_{\underline{\mu}}(x^e) = \sum_{i = 0}^{d-1} \textrm{Res}^{\mbf{s}'}_{\underline{\mu}_i}(x^e) \qquad \forall \, \underline{\mu} \in \mc{P}(m,n) .
\edm
Hence $\textrm{Res}^{\mbf{s}}_{\underline{\lambda}}(x^e) = d \, \textrm{Res}^{\mbf{s}'}_{\underline{\lambda}_0}(x^e)$ and $\textrm{Res}^{\mbf{s}}_{\underline{\lambda}'}(x^e) = d \, \textrm{Res}^{\mbf{s}'}_{(\underline{\lambda}')_0}(x^e)$. It follows from Theorem \ref{thm:Mo} that 
\bdm
\textrm{Res}^{\mbf{s}'}_{\underline{\lambda}_0}(x^e) = \textrm{Res}^{\mbf{s}'}_{(\underline{\lambda}')_0}(x^e).
\edm
Set $\underline{\mu} = (\underline{\lambda}_0, (\underline{\lambda}')_0, \underline{\lambda}_0, \ds, \underline{\lambda}_0)$, it is a non $d$-stuttering $m$-multipartition. Again by Theorem \ref{thm:Mo}, $\textrm{Res}^{\mbf{s}}_{\underline{\lambda}}(x^e) = \textrm{Res}^{\mbf{s}}_{\underline{\mu}}(x^e)$ implies that $\underline{\lambda}$ and $\underline{\mu}$ belong to the same partition of $\mathsf{CM}_{\mbf{c}}(G(m,1,n))$. 
\end{proof}

\subsection{The main result}\label{thm:mainresult} Recall that for $\mc{P} \in \CMW$, $\Gamma(\mc{P})$ was defined to be the set of all $\mu \in \LK$ occurring as a summand of $\Res_{W}^{K} \, \lambda$ for each $\lambda \in \mc{P}$. In the case $W = G(m,1,n)$ and $K = G(m,d,n)$, $\Gamma$ is given combinatorially by $\Gamma(\mc{P}) = \{ \, ( \, \{ \underline{\lambda} \} \, , \epsilon ) \, | \, \underline{\lambda} \in \mc{P}, \, \epsilon \in C^\vee_{\underline{\lambda}} \}$. 

\begin{thm}
Let $\mbf{c} \, : \, \mc{S}(G(m,d,n)) \rightarrow \C$ be a $G(m,1,n)$-equivariant function such that $k \neq 0$ and $\mbf{h} \in \Q^{m+1}$. The $\mathsf{CM}_{\mbf{c}}(G(m,d,n))$ partition of $\textsf{Irr} \, (G(m,d,n))$ is described as follows. Let $\mc{Q}$ be a partition in $\mathsf{CM}_{\mbf{c}}(G(m,1,n))$:
\begin{enumerate}
\item If $\underline{\lambda}$ is a $d$-stuttering $m$-multipartition such that $\mc{Q} = \{ \underline{\lambda} \}$ then the sets $\{ ( \{ \underline{\lambda} \} , \epsilon ) \}$ where $\epsilon \in C^\vee_d$ are partitions of $\mathsf{CM}_{\mbf{c}}(G(m,d,n))$; 
\item Otherwise $\Gamma (\mc{Q})$ is a $\mathsf{CM}_{\mbf{c}}(G(m,d,n))$ partition of $\textsf{Irr} \, (G(m,d,n))$.
\end{enumerate}
\end{thm}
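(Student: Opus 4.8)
The plan is to apply Theorem \ref{thm:compare1}, which establishes the bijection $\mathsf{CM}_{\mbf{c}}(G(m,1,n)) \leftrightarrow \mathsf{CM}_{\mbf{c}}(G(m,d,n)) / C_d$ via the rule $\mc{Q} \mapsto \Gamma(\mc{Q})$, and to translate its abstract content into the explicit combinatorics of multipartitions. The crucial structural dichotomy comes from how $C_d$ acts on the $G(m,d,n)$-partition $\Gamma(\mc{Q})$: Theorem \ref{thm:compare1} tells us $\Gamma(\mc{Q})$ is a single $C_d$-orbit of $G(m,d,n)$-blocks, and the two cases of the statement correspond precisely to whether this orbit is a singleton (the action is trivial) or has full length. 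First I would observe that under the $p$-cyclic hypothesis (which, by Lemma \ref{lem:cyclicparameters}, is exactly the condition that $\mbf{c}$ extends by zero from $G(m,d,n)$) Lemma \ref{lem:Sdorbit} forces each partition $\mc{Q}$ to be an $\mf{S}_d$-orbit, and in particular a union of $C_d^\vee$-orbits $\{\underline{\lambda}\}$.

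For case (1), suppose $\mc{Q} = \{\underline{\lambda}\}$ is a singleton consisting of a single $d$-stuttering multipartition. Being $d$-stuttering means $\underline{\lambda}$ is fixed by all of $C_d^\vee$, so $C_{\underline{\lambda}}^\vee = C_d^\vee$ and hence $C_\mu = \{1\}$; by Proposition \ref{prop:Clifford} the restriction $\Res_K^W \underline{\lambda}$ splits into $d$ distinct irreducibles, indexed by the $d$ characters $\epsilon \in C_{\underline{\lambda}}^\vee = C_d^\vee$. I would then invoke Lemma \ref{lem:singletons}: since $\{\underline{\lambda}\}$ is a singleton block of $\HW$, the lemma gives directly that each summand $(\{\underline{\lambda}\},\epsilon)$ is itself a singleton partition of $\CMK$. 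This yields exactly the assertion that the $d$ sets $\{(\{\underline{\lambda}\},\epsilon)\}$ are the $\mathsf{CM}_{\mbf{c}}(G(m,d,n))$-partitions lying over $\mc{Q}$.

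For case (2) I would argue that whenever $\mc{Q}$ is \emph{not} a singleton of a $d$-stuttering multipartition, the group $C_d$ acts \emph{trivially} on the block $\Gamma(\mc{Q})$, so that the $C_d$-orbit in Theorem \ref{thm:compare1} collapses to a single block and $\Gamma(\mc{Q})$ is itself one $\mathsf{CM}_{\mbf{c}}(G(m,d,n))$-partition. The key point to establish is that $\mc{Q}$ contains an element whose $C_d^\vee$-stabilizer is as small as possible relative to each prime; this is where Lemmas \ref{lem:primedivisor} and \ref{lem:dstuttering} enter. If $\mc{Q}$ contains a non-$d$-stuttering multipartition, Lemma \ref{lem:primedivisor} produces, for each prime $q \mid d$, a representative $\underline{\lambda}(q)$ in $\mc{Q}$ whose stabilizer order is coprime to $q$; if every element of $\mc{Q}$ were $d$-stuttering but $\mc{Q}$ were not a singleton, Lemma \ref{lem:dstuttering} manufactures a non-$d$-stuttering element in $\mc{Q}$, reducing us to the previous situation. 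Combining these representatives across all primes $q \mid d$ shows the $C_d$-action on $\Gamma(\mc{Q})$ fixes a block that is not moved by any element of prime order, forcing the action to be trivial.

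The main obstacle I anticipate is the passage in case (2) from ``$\mc{Q}$ contains a representative with stabilizer coprime to each individual prime $q$'' to ``$C_d$ fixes the block $\Gamma(\mc{Q})$ as a whole.'' The delicate step is relating the $C_d^\vee$-stabilizer of a \emph{multipartition} $\underline{\lambda}$ to the $C_d$-stabilizer of the corresponding \emph{block} of $\HtK$; this requires carefully tracking, through Proposition \ref{prop:Clifford2} and the identification of $\LK$ with pairs $(\{\underline{\lambda}\},\epsilon)$ from Section \ref{sec:reps}, how the $C_d$-action permutes the irreducible summands $\tilde{L}(\mu_i)$ inside $\Res_{A_K}^{A_W} L(\lambda)$ and hence how it permutes blocks. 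One must check that a block containing a constituent with trivial-modulo-$q$ stabilizer cannot be moved by the order-$q$ subgroup of $C_d$, and then assemble this over all primes dividing $d$ to rule out any nontrivial element of $C_d$ moving $\Gamma(\mc{Q})$; the bijection of Theorem \ref{thm:compare1} then identifies $\Gamma(\mc{Q})$ with a single $\mathsf{CM}_{\mbf{c}}(G(m,d,n))$-partition, completing the argument.
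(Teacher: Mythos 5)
Your proposal is correct and follows essentially the same route as the paper: Lemma \ref{lem:singletons} disposes of case (1), and for case (2) you show the block $B$ with $C_d\cdot B = \Gamma(\mc{Q})$ is $C_d$-stable by producing, for each prime $q \mid d$ and via Lemmas \ref{lem:dstuttering} and \ref{lem:primedivisor}, a constituent whose $C_d$-stabilizer (computed through Proposition \ref{prop:Clifford2}(1) from the $C_d^\vee$-stabilizer of the multipartition) contains the full $q$-part of $d$. The "delicate step" you flag — passing from stabilizers of constituents to the stabilizer of the block, using that every $L(\lambda)\in\mc{Q}$ has a summand lying in $B$ — is exactly the argument the paper carries out.
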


\begin{proof}
Rescaling if necessary, we may assume that $k = -1$. It is clear that the sets described in $(1)$ and $(2)$ of the theorem define a partition of the set $\textsf{Irr} \, (G(m,d,n))$. Therefore we just have to show that the sets describe the blocks of $\bar{H}_{\mbf{c}}(G(m,d,n))$. Proposition \ref{thm:lifting} says that it is sufficient to prove that $(1)$ and $(2)$ describe the equivalence classes of $\textsf{Irr} \, (G(m,d,n))$ with respect to the blocks of $\tilde{H}_{\mbf{c}}(G(m,d,n))$. Lemma \ref{lem:singletons} shows that the sets described in $(1)$ are indeed blocks of $\tilde{H}_{\mbf{c}}(G(m,d,n))$. So let us assume that $\mc{Q}$ is not of the form described in $(1)$. The group $C_d$ acts on the set $\Gamma(\mc{Q})$ and Theorem \ref{thm:compare1} says that there exists a block $B$ of $\tilde{H}_{\mbf{c}}(G(m,d,n))$ such that $C_d \cdot B = \Gamma(\mc{Q})$. We wish to show that $C_d \cdot B = B$. The fact that $g \cdot \tilde{L} \in g \cdot B$ for $\tilde{L} \in B$ and $g \in C_d$ implies that
\bdm
\bigcup_{\tilde{L} \in B} \textrm{Stab}_{C_d} \, \tilde{L}  \subseteq \textrm{Stab}_{C_d} \, B.
\edm
To show that $\textrm{Stab}_{C_d} \, B = C_d$ we will show that for every prime $q$ dividing $d$ there exists a $\tilde{L} \in B$ such that the highest power of $q$ dividing $d$ also divides $|\textrm{Stab}_{C_d} \, \tilde{L}(\mu)|$. This will imply $C_d \cdot B = B$ i.e. $\Gamma(\mc{Q}) = B$. Let $L(\lambda) \in \mc{Q}$ and let $\tilde{L}(\mu)$ be a summand of $\Res_{A_{G(m,d,n)}}^{A_{G(m,1,n)}} \, L(\lambda)$, then $\tilde{L}(\mu) \in g \cdot B$ for some $g \in C_d$. This means that $g^{-1} \cdot \tilde{L}(\mu) \in B$ is also a summand of $L(\lambda)$. Thus $\Res_{A_{G(m,d,n)}}^{A_{G(m,1,n)}} \, L(\lambda)$ contains a summand that lives in $B$, for all $L(\lambda) \in \mc{Q}$. Since $\textrm{Stab}_{C_d} \, \tilde{L}(\mu) = \textrm{Stab}_{C_d} \, \tilde{L}(\mu')$ for any two summands $\tilde{L}(\mu)$ and $\tilde{L}(\mu')$ of $\Res_{A_{G(m,d,n)}}^{A_{G(m,1,n)}} \, L(\lambda)$, it will suffice to show that, for every prime $q$ dividing $d$, there exists a $L(\lambda) \in \mc{Q}$ such that the highest power of $q$ dividing $d$ also divides $| \textrm{Stab}_{C_d} \, \tilde{L}(\mu) |$ for some summand $ \tilde{L}(\mu)$ of $\Res_{A_{G(m,d,n)}}^{A_{G(m,1,n)}} \, L(\lambda)$. Proposition \ref{prop:Clifford2} $(1)$ says that
\bdm
|\textrm{Stab}_{C_d} \, \tilde{L}(\mu)| \cdot | \textrm{Stab}_{C^\vee_d} \, L(\lambda)| = d.
\edm
Therefore it suffices to show that we can find $L(\lambda) \in \mc{Q}$ such that $q$ does not divide $| \textrm{Stab}_{C^\vee_d} \, L(\lambda)|$. Since $\mc{Q} \neq \{ \underline{\lambda} \}$ for some $d$-stuttering multipartition $\underline{\lambda}$, Lemma \ref{lem:dstuttering} says that there exists a non $d$-stuttering multipartition in $\mc{Q}$. Lemma \ref{lem:primedivisor} now says that the module $L(\lambda)$ we require exists in $\mc{Q}$.
\end{proof}

\begin{cor}
Let $\mbf{c} \, : \, \mc{S}(G(m,d,n)) \rightarrow \C$ be a $G(m,1,n)$-equivariant function such that $k = -1$ and $\mbf{h} \in \Q^{m+1}$, extended to a function $\mbf{c} \, : \,  \mc{S}(G(m,1,n)) \rightarrow \C$ and define $\mbf{s}$ as in (\ref{thm:Mo}). Choose $( \{ \underline{\lambda} \}, \epsilon) , ( \{ \underline{\mu} \}, \eta) \in \textsf{Irr} \, (G(m,d,n))$, then 
\begin{itemize}
\item if $\{ \underline{\lambda} \} \neq \{ \underline{\mu} \}$, then $( \{ \underline{\lambda} \}, \epsilon)$ and  $( \{ \underline{\mu} \}, \eta)$ are in the same partition of $\mathsf{CM}_{\mbf{c}}(G(m,d,n))$ if and only if
\bdm
\textrm{Res}^{\mbf{s}}_{\underline{\lambda}}(x^e) = \textrm{Res}^{\mbf{s}}_{\underline{\mu}}(x^e);
\edm
\item  if $\underline{\lambda} = \underline{\mu}$ is a $d$-stuttering partition and $\textrm{Res}^{\mbf{s}}_{\underline{\lambda}}(x^e) \neq \textrm{Res}^{\mbf{s}}_{\underline{\nu}}(x^e)$ for all $\underline{\lambda} \neq \underline{\nu} \in \mc{P}(m,n)$ then $( \{ \underline{\lambda} \}, \epsilon)$ and  $( \{ \underline{\lambda} \}, \eta)$ are in the same partition of $\mathsf{CM}_{\mbf{c}}(G(m,d,n))$ if and only if $\epsilon = \eta$; 
\item otherwise $( \{ \underline{\lambda} \}, \epsilon)$ and  $( \{ \underline{\lambda} \}, \eta)$ are in the same partition of $\mathsf{CM}_{\mbf{c}}(G(m,d,n))$.
\end{itemize}
\end{cor}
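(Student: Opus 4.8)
The corollary is a direct translation of the main Theorem (in \ref{thm:mainresult}) into the combinatorial language of shifted residues furnished by Theorem \ref{thm:Mo}. So the plan is to take each of the three bullets and match it against one of the two cases of the main Theorem, reading off the residue criterion via Theorem \ref{thm:Mo}. Throughout I would keep in mind the parametrization from (\ref{sec:reps}): an irreducible of $G(m,d,n)$ is a pair $(\{\underline{\lambda}\},\epsilon)$ with $\{\underline{\lambda}\}$ a $C_d^\vee$-orbit of multipartitions and $\epsilon \in C^\vee_{\underline{\lambda}}$, and that $\Gamma(\mc{Q})$ collects exactly the pairs $(\{\underline{\lambda}\},\epsilon)$ with $\underline{\lambda}\in\mc{Q}$.

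\textbf{First bullet ($\{\underline{\lambda}\}\neq\{\underline{\mu}\}$).}
Here the two pairs lie over genuinely distinct $C_d^\vee$-orbits, so they can only be merged through case (2) of the main Theorem, i.e.\ via a single $\Gamma(\mc{Q})$ block. They are in the same $\mathsf{CM}_{\mbf{c}}(G(m,d,n))$-partition precisely when $\underline{\lambda}$ and $\underline{\mu}$ lie in the same partition $\mc{Q}$ of $\mathsf{CM}_{\mbf{c}}(G(m,1,n))$ (note that if one of them were a $d$-stuttering singleton block as in case (1), the orbits would have to agree for them to be merged, contradicting $\{\underline{\lambda}\}\neq\{\underline{\mu}\}$). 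By Theorem \ref{thm:Mo} this last condition is exactly $\textrm{Res}^{\mbf{s}}_{\underline{\lambda}}(x^e)=\textrm{Res}^{\mbf{s}}_{\underline{\mu}}(x^e)$, giving the first bullet.

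\textbf{Second and third bullets ($\underline{\lambda}=\underline{\mu}$).}
Now the two pairs share a multipartition but may differ in the character component. For the second bullet the hypothesis that $\underline{\lambda}$ is $d$-stuttering with $\textrm{Res}^{\mbf{s}}_{\underline{\lambda}}(x^e)$ distinct from every other residue says exactly that $\{\underline{\lambda}\}$ is a singleton partition of $\mathsf{CM}_{\mbf{c}}(G(m,1,n))$ (again by Theorem \ref{thm:Mo}), putting us in case (1): the blocks are the singletons $\{(\{\underline{\lambda}\},\epsilon)\}$, so $(\{\underline{\lambda}\},\epsilon)$ and $(\{\underline{\lambda}\},\eta)$ agree iff $\epsilon=\eta$. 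The third bullet is the complementary situation, where $\{\underline{\lambda}\}$ is not such an isolated $d$-stuttering singleton: then $\underline{\lambda}$ sits inside some larger $\mc{Q}$ handled by case (2), and since $\Gamma(\mc{Q})$ contains \emph{all} pairs $(\{\underline{\lambda}\},\epsilon)$ with $\epsilon\in C^\vee_{\underline{\lambda}}$, both characters $\epsilon,\eta$ land in the same block regardless. Here I would be slightly careful to confirm that ``not the case-(1) situation'' is correctly characterized by the failure of the second bullet's hypothesis, invoking Lemma \ref{lem:dstuttering} to guarantee that when the residue of a $d$-stuttering $\underline{\lambda}$ is \emph{not} isolated, $\underline{\lambda}$ genuinely lives in a non-singleton partition containing a non-$d$-stuttering multipartition, so case (2) applies.

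\textbf{Main obstacle.}
The calculational content is light; the one point that requires care is the exhaustiveness and mutual exclusivity of the three bullets against the two cases of the main Theorem — in particular verifying that a $d$-stuttering $\underline{\lambda}$ with isolated residue is \emph{exactly} the case-(1) scenario and that every other configuration falls under case (2). This is a bookkeeping matter resolved by combining Theorem \ref{thm:Mo} (residue = partition) with Lemma \ref{lem:dstuttering}, rather than any new argument.
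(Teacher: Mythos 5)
Your proposal is correct and follows the same route the paper intends: the corollary is stated without proof precisely because it is the direct translation of Theorem \ref{thm:mainresult} through the residue criterion of Theorem \ref{thm:Mo}, together with the fact that $\mathsf{CM}_{\mbf{c}}(G(m,1,n))$-partitions are unions of $C_d^\vee$-orbits, which is exactly the bookkeeping you carry out. The only superfluous step is the appeal to Lemma \ref{lem:dstuttering} in the third bullet: once the residue of a $d$-stuttering $\underline{\lambda}$ fails to be isolated, Theorem \ref{thm:Mo} already forces its $\mathsf{CM}_{\mbf{c}}(G(m,1,n))$-class to be non-singleton, so case (2) of the main theorem applies directly.
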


\subsection{}\label{lem:generic}
It was shown by the author in \cite{Singular} that the partition $\mathsf{CM}_{\mbf{c}}(G(m,d,n))$ is never trivial, even for generic values of $\mbf{c}$. Here we describe $\mathsf{CM}_{\mbf{c}}(G(m,d,n))$ for generic $\mbf{c}$.

\begin{lem}
Let $\mbf{c}$ be a generic parameter for $H_{\mbf{c}}(G(m,d,n))$ such that $k \neq 0$ and $\mbf{h} \in \Q^{m+1}$. Choose $( \{ \underline{\lambda} \}, \epsilon) , ( \{ \underline{\mu} \}, \eta) \in \textsf{Irr} \, (G(m,d,n))$,
\begin{itemize}
\item  if $\underline{\lambda}$ is a $d$-stuttering partition then $\{ \, ( \{ \underline{\lambda} \}, \epsilon) \, \}$ is a partition of $\mathsf{CM}_{\mbf{c}}(G(m,d,n))$. 
\item otherwise $( \{ \underline{\lambda} \}, \epsilon)$ and  $( \{ \underline{\mu} \}, \eta)$ are in the same partition of $\mathsf{CM}_{\mbf{c}}(G(m,d,n))$ if and only if 
\beq\label{eq:sum}
\sum_{i = 0}^{d-1} \textrm{Res} {}_{\lambda^{j + pi}}(x^e) = \sum_{i = 0}^{d-1} \textrm{Res} {}_{\mu^{j + pi}}(x^e) \quad \forall \, 0 \le j \le p-1.
\eeq
\end{itemize}
Note that the expressions in (\ref{eq:sum}) are independent of the choice of representatives $\underline{\lambda} \in \{ \underline{\lambda} \}$ and $\underline{\mu} \in \{ \underline{\mu} \}$.
\end{lem}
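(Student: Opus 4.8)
The plan is to deduce this lemma from the Main Theorem (\ref{thm:mainresult}) together with the residue criterion of Theorem (\ref{thm:Mo}) by analyzing what happens to the partition $\mathsf{CM}_{\mbf{c}}(G(m,1,n))$ as $\mbf{c}$ becomes generic among $p$-cyclic parameters. First I would observe that genericity of $\mbf{c}$ should be interpreted as genericity of the associated parameter $\mbf{h}$ subject to the $p$-cyclic constraint $H_{i+p} = H_i$ (this constraint is forced, by Lemma \ref{lem:cyclicparameters}, precisely because $\mbf{c}$ is the extension by zero of a parameter for $G(m,d,n)$). Thus the relevant shift vector has the repeated form $\mbf{s} = (\mbf{s}', \ds, \mbf{s}')$ from the proof of Lemma \ref{lem:Sdorbit}, and the residue of $\underline{\lambda}$ collapses to $\textrm{Res}^{\mbf{s}}_{\underline{\lambda}}(x^e) = \sum_{i=0}^{d-1} \textrm{Res}^{\mbf{s}'}_{\underline{\lambda}_i}(x^e)$.

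The key step is to show that for generic $p$-cyclic $\mbf{h}$ the criterion of Theorem \ref{thm:Mo} degenerates to the condition (\ref{eq:sum}). Writing out $\textrm{Res}^{\mbf{s}'}_{\underline{\lambda}_i}(x^e) = \sum_{j=0}^{p-1} x^{e \cdot s'_j} \textrm{Res}_{\lambda^{j+pi}}(x^e)$ and summing over $i$, the full residue is a $\Z$-linear combination of the monomials $x^{e \cdot s'_j}$ with coefficients that are themselves sums of ordinary residues. For generic $\mbf{h}$ the exponents $e \cdot s'_0, \ds, e \cdot s'_{p-1}$ can be taken to be widely separated integers, so that no two of the building blocks $x^{e \cdot s'_j} \textrm{Res}_{\lambda^{j+pi}}(x^e)$ overlap across different values of $j$; consequently equality of the two full residues $\textrm{Res}^{\mbf{s}}_{\underline{\lambda}}(x^e) = \textrm{Res}^{\mbf{s}}_{\underline{\mu}}(x^e)$ is equivalent to matching the coefficient block at each $j$ separately, which is exactly the system (\ref{eq:sum}). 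This is the heart of the argument and I expect the main obstacle to be making the notion of ``generic'' precise and arguing that the finitely many coincidences among the shifted residues that could spoil the separation are avoided off a proper Zariski-closed (equivalently, a union of finitely many hyperplane) subset of the $p$-cyclic parameter space; one must check that only finitely many linear conditions on $\mbf{h}$ are relevant because $n$ is fixed, bounding the contents that appear.

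With this degeneration in hand the lemma follows by feeding the result into Theorem \ref{thm:mainresult}. For the first bullet, if $\underline{\lambda}$ is $d$-stuttering then all the blocks $\underline{\lambda}_i$ coincide, so $\textrm{Res}^{\mbf{s}}_{\underline{\lambda}}(x^e) = d \, \textrm{Res}^{\mbf{s}'}_{\underline{\lambda}_0}(x^e)$, and for generic $\mbf{h}$ no distinct multipartition can have the same residue (again by the separation argument, since matching would force a collapsed identity that fails generically); hence $\{\underline{\lambda}\}$ is a singleton partition of $\mathsf{CM}_{\mbf{c}}(G(m,1,n))$, and case $(1)$ of Theorem \ref{thm:mainresult} applies to give that each $\{(\{\underline{\lambda}\}, \epsilon)\}$ is its own block. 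For the second bullet, when $\underline{\lambda}$ is not $d$-stuttering, case $(2)$ of Theorem \ref{thm:mainresult} shows that $(\{\underline{\lambda}\}, \epsilon)$ and $(\{\underline{\mu}\}, \eta)$ lie in the same partition exactly when $\underline{\lambda}$ and $\underline{\mu}$ lie in the same partition of $\mathsf{CM}_{\mbf{c}}(G(m,1,n))$, which by the degenerated criterion is the condition (\ref{eq:sum}); this is independent of $\epsilon, \eta$ as claimed. Finally the parenthetical independence-of-representatives assertion is immediate, since replacing $\underline{\lambda}$ by $\delta \cdot \underline{\lambda}$ cyclically permutes the blocks $\underline{\lambda}_i$ and hence merely reorders the outer sum over $i$ in (\ref{eq:sum}), leaving each of the $p$ equations unchanged.
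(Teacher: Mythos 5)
Your overall strategy is the same as the paper's: use the $p$-cyclic form $\mbf{s} = (\mbf{s}', \ds, \mbf{s}')$ to write $\textrm{Res}^{\mbf{s}}_{\underline{\lambda}}(x^e) = \sum_{j} x^{e s'_j} \bigl( \sum_{i} \textrm{Res}_{\lambda^{j+pi}}(x^e) \bigr)$, invoke genericity to split the residue criterion of Theorem \ref{thm:Mo} into the $p$ blockwise identities (\ref{eq:sum}), and then feed the resulting description of $\mathsf{CM}_{\mbf{c}}(G(m,1,n))$ into Theorem \ref{thm:mainresult}. That part is sound, and your expansion of the one-line appeal to genericity (only finitely many linear coincidences among the shifted exponents need to be avoided, since the contents occurring are bounded in terms of $n$) is a fair account of what the paper leaves implicit. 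The remark on independence of representatives is also correct.

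There is, however, a genuine flaw in your treatment of the first bullet. Once genericity has been spent on separating the blocks, the question of whether a $d$-stuttering $\underline{\lambda}$ can lie in the same $\mathsf{CM}_{\mbf{c}}(G(m,1,n))$-class as some $\underline{\mu} \neq \underline{\lambda}$ reduces to the condition $d\,\textrm{Res}_{\lambda^{j}}(x^e) = \sum_{i=0}^{d-1} \textrm{Res}_{\mu^{j+pi}}(x^e)$ for all $j$, which involves no parameters whatsoever. Your justification --- that ``matching would force a collapsed identity that fails generically'' --- therefore cannot work: there is nothing left to be generic about. What is actually needed (and what the paper uses) is the purely combinatorial fact that if $d$ times the residue of a single partition equals a sum of $d$ partition residues, then each of those $d$ partitions equals the given one; this is proved, for instance, by comparing coefficients at the extreme contents and inducting. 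With that lemma, $\{\underline{\lambda}\}$ is a singleton class of $\mathsf{CM}_{\mbf{c}}(G(m,1,n))$ and Theorem \ref{thm:mainresult}(1) gives the first bullet; the rest of your argument then goes through.
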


\begin{proof}
Since $\mbf{h}$ is cyclic, we note once again that the vector $\mbf{s}$ as defined in (\ref{thm:Mo}) has the form
\bdm
\mbf{s} = (\mbf{s}' , \ds , \mbf{s}') \quad \textrm{ where } \quad \mbf{s}' = (0, eH_1, e H_1 + eH_2, \ds, e H_1 + \ds + e H_{p-1}).
\edm
Therefore 
\bdm
\textrm{Res}^{\mbf{s}} {}_{\underline{\lambda}}(x^e) = \sum_{j = 0}^{p-1} x^{e \mbf{s}_j} \left( \sum_{i = 0}^{d-1} \textrm{Res} {}_{\lambda^{j + pi}}(x^e) \right),
\edm
and thus the genericity of $\mbf{c}$ implies that 
\bdm
\textrm{Res}^{\mbf{s}} {}_{\underline{\lambda}}(x^e) = \textrm{Res}^{\mbf{s}} {}_{\underline{\mu}}(x^e) \Leftrightarrow \sum_{i = 0}^{d-1} \textrm{Res} {}_{\lambda^{j + pi}}(x^e) = \sum_{i = 0}^{d-1} \textrm{Res} {}_{\mu^{j + pi}}(x^e) \quad \forall \, 0 \le j \le p-1.
\edm
If $\underline{\lambda}$ is $d$-stuttering then $\sum_{i = 0}^{d-1} \textrm{Res} {}_{\lambda^{j + pi}}(x^e) = d \, \textrm{Res} {}_{\lambda^{j}}(x^e)$, $\forall \, 0 \le j \le p-1$. It can easily be shown that if
\bdm
d \, \textrm{Res} {}_{\lambda^{j}}(x^e) = \sum_{i = 0}^{d-1} \textrm{Res} {}_{\mu^{j + pi}}(x^e)
\edm
then $\mu^{j + pi} = \lambda^j$ for all $i$. Therefore each $d$-stuttering partition forms a singleton partition in $\mathsf{CM}_{\mbf{c}}(G(m,1,n))$. Now the Lemma follows from Corollary \ref{thm:mainresult}.
\end{proof}

\section{Relation to Rouquier families}

\subsection{Generic Hecke algebras}In this section we show that Theorem \ref{thm:mainresult} confirms Martino's conjecture when $W = G(m,d,n)$. To each complex reflection group it is possible to associate a generic Hecke algebra. We recall the definition as given in \cite{Mo} (see also \cite{BMR}). Denote by $\mc{K}$ the set of all hyperplanes in $\h$ that are the fixed point sets of the complex reflections in $W$. The group $W$ acts on $\mc{K}$. Given $H \in \mc{K}$, the parabolic subgroup of $W$ that fixes $H$ pointwise is a rank one complex reflection group and thus isomorphic to the cyclic group $C_e$ for some $e$. Therefore an orbit of hyperplanes $\mc{C} \in \mc{K}$ corresponds to a conjugacy class of rank one parabolic subgroups, all isomorphic to $C_{e_{\mc{C}}}$. Let $e_{\mc{C}} := |C_{e_{\mc{C}}}|$ be the order of these parabolic subgroups. For every $d > 1$, fix $\eta_d = e^{\frac{2 \pi i}{d}}$ and let $\mu_{d}$ be the group of all $d^{th}$ roots of unity in $\C$. If $\mu_{\infty}$ is the group of all roots of unity in $\C$ then we choose $K$ to be some finite field extension of $\Q$ contained in $\Q (\mu_{\infty})$ such that $K$ contains $\mu_{e_{\mc{C}}}$ for all $\mc{C} \in \mc{K} / W$. The group of roots of unity in $K$ is denoted $\mu(K)$ and the ring of integers in $K$ is $\Z_K$.

\subsection{} Fix a point $x_0 \in \h_{\textrm{reg}} := \h \backslash \bigcup_{H \in \mc{K}} H$ and denote by $\bar{x}_0$ its image in $\h_{\textrm{reg}} / W$. Let $B$ denote the fundamental group $\Pi_1 (\h_{\textrm{reg}} / W , \bar{x}_0)$. Let $\mbf{u} = \{ \, (u_{\mc{C},j}) \, : \mc{C} \in \mc{K} / W, \, 0 \le j \le e_{\mc{C}} - 1 \}$ be a set of indeterminates, and denote by $\Z[\mbf{u},\mbf{u}^{-1}]$ the ring $\Z[u_{\mc{C},j}^{\pm 1} \, : \mc{C} \in \mc{K} / W, \, 0 \le j \le e_{\mc{C}}-1]$. The \textit{generic Hecke algebra}, $\mc{H}_W$, is the quotient of $\Z[ \mbf{u}, \mbf{u}^{-1}]B$ by the relations of the form
\bdm
(\mbf{s} - u_{\mc{C},0})(\mbf{s} - u_{\mc{C},1}) \cdots (\mbf{s} - u_{\mc{C},e_{\mc{C}} - 1}), 
\edm
where $\mc{C} \in \mc{K}/W$ and $\mbf{s}$ runs over the set of monodromy generators around the images in $\h_{\textrm{reg}} / W$ of the hyperplane orbit $\mc{C}$. The following properties are known to hold for all but finitely many complex reflection groups (it is conjectured that they hold for all complex reflection groups). In particular, they hold for the infinite series $G(m,d,n)$.
\begin{itemize}
\item $\mc{H}_W$ is a free $\Z[\mbf{u}, \mbf{u}^{-1}]$-module of rank $|W|$.
\item $\mc{H}_W$ has a symmetrizing form $t \, : \, \mc{H}_W \rightarrow \Z[\mbf{u}, \mbf{u}^{-1}]$ that coincides with the standard symmetrizing form on $\Z_K W$ after specializing $u_{\mc{C},j}$ to $\eta^j_{e_\mc{C}}$. 
\item Let $\mbf{v} = \{ (v_{\mc{C},j}) \, : \mc{C} \in \mc{K} / W, \, 0 \le j \le e_{\mc{C}}-1 \}$ be a set of indeterminates such that $u_{\mc{C},j} = \eta^j_{e_{\mc{C}}} v_{\mc{C},j}^{| \mu(K)|}$. Then the $K(\mbf{v})$-algebra $K(\mbf{v})\mc{H}_W$ is split semisimple.
\end{itemize}
Note that Tits' deformation theorem, \cite[Theorem 7.2]{GeckPff}, implies that the specialization $v_{\mc{C},j} \mapsto 1$ induces a bijection $\LW \leftrightarrow \textsf{Irr} \, K(\mbf{v})\mc{H}_W$.

\begin{remark} When $W = G(m,1,n)$ the set $\mc{K}/W$ is $\{ \mc{R}, \mc{S} \}$ where $\mc{R}$ is the orbit of hyperplanes that define the reflections in the conjugacy class $R$ and $\mc{S}$ is the orbit of hyperplanes defining the reflections in the conjugacy classes $S_0, \ds, S_{m-1}$. Therefore $e_{\mc{R}} = 2$ and $e_{\mc{S}} = m$. Similarly, when $W = G(m,d,n)$ and $n \neq 2$ or $n = 2$ and $p$ odd the set $\mc{K}/W$ is $\{ \mc{R}, \mc{S} \}$ where $\mc{R}$ is the orbit of hyperplanes that define the reflections in the conjugacy class $R$ and $\mc{S}$ is the orbit of hyperplanes defining the reflections in the conjugacy classes $S_d, \ds, S_{d(p-1)}$. Therefore $e_{\mc{R}} = 2$ and $e_{\mc{S}} = p$. However, when $W = G(m,d,2)$ with $d$ even, the set $\mc{K}/W$ is $\{ \mc{R}_1, \mc{R}_2, \mc{S} \}$, where $\mc{R}_1$, $\mc{R}_2$ are the orbits of the hyperplanes that define the reflections in the conjugacy classes $R_1$ and $R_2$. Here $e_{\mc{R}_1} = e_{\mc{R}_2} = 2$ and $e_{\mc{S}} = p$. 
\end{remark}

\subsection{Cyclotomic Hecke algebras}
The cyclotomic Hecke algebras are certain specializations of the generic Hecke algebra. Let $y$ be an indeterminate.
\begin{defn}
A cyclotomic Hecke algebra is the $\Z_K[y,y^{-1}]$-algebra induced from $\Z[\mbf{v},\mbf{v}^{-1}] \mc{H}_W$ by an algebra homomorphism of the form 
\bdm
\Z_K [\mbf{v},\mbf{v}^{-1}] \rightarrow \Z_K[y,y^{-1}], \qquad v_{\mc{C},j} \mapsto y^{n_{\mc{C},j}},
\edm
where the tuple $\mbf{n} := \{ (n_{\mc{C},j} \in \Z)  \, : \mc{C} \in \mc{K} / W, \, 0 \le j \le e_{\mc{C}}-1 \}$ is chosen such that the following property holds. Set $ x:= y^{|\mu (K) |}$ and let $z$ be an indeterminate. Then the element of $\Z_K[y,z]$ defined by
\bdm
\Gamma_{\mc{C}}(y,z) = \prod^{e_{\mc{C}} - 1}_{j = 0} (z - \eta^j_{e_{\mc{C}}} y^{n_{\mc{C},j}})
\edm
is required to be invariant under $\textrm{Gal} \, (K(y)/ K(x))$ for all $\mc{C} \in \mc{K}/W$. In other words, $\Gamma_{\mc{C}}(y,z)$ is contained in $\Z_K[x^{\pm 1},z]$. The cyclotomic Hecke algebra corresponding to $\mbf{n}$ is denoted $\mc{H}_W(\mbf{n})$.
\end{defn}
The symmetric form $t$ on $\mc{H}_W$ induces a symmetrizing form on $K(y)\mc{H}_W(\mbf{n})$ and this algebra is split semisimple by \cite[(4.3)]{Chlou3}. Therefore Tits' deformation theorem implies that we have bijections
\bdm
\LW \leftrightarrow \textsf{Irr} \, K(y)\mc{H}_W(\mbf{n}) \leftrightarrow K(\mbf{v})\mc{H}_W.
\edm

\subsection{Rouquier families}
The \textit{Rouquier ring} is defined to be $\mc{R}(y) = \Z_K[y,y^{-1},(y^n - 1)^{-1} \, : \, n \in \N]$. Since $\mc{H}_W$ is free of rank $|W|$, $\mc{R}(y)\mc{H}_W(\mbf{n}) \subset K(y)\mc{H}_W(\mbf{n})$ is also free of rank $|W|$. We define an equivalence relation on $\textsf{Irr} \, K(y)\mc{H}_W(\mbf{n}) = \LW$ by  saying that $\lambda \sim \mu$ if and only if $\lambda$ and $\mu$ belong to the same block of $\mc{R}(y)\mc{H}_W(\mbf{n})$. The equivalence classes of this relation are called \textit{Rouquier families}. 

\subsection{}\label{sec:fixparametersconjecture} Fix a parameter $\mbf{c}$ for $G(m,d,n)$ that extends to a parameter $\mbf{c}$ for $G(m,1,n)$, translated into the form $\mbf{h} = (h,H_0, \ds , H_{m-1})$ as described in (\ref{subsection:translate}). Again we make the assumption that $h = -1$ and $\mbf{h} \in \Q^{m+1}$. Choose $e \in \N$ such that $eh$ and $eH_i \in \Z$ for all $0 \le i \le m-1$. Then $\mbf{n} = (n_{\mc{R},0},n_{\mc{R},1},n_{\mc{S},0}, \ds , n_{\mc{S},m-1})$ is fixed to be $n_{\mc{R},0} = e, n_{\mc{R},1} = 0$ and $n_{\mc{S},j} = e \sum_{i = 1}^j H_i$ for $0 \le j \le m - 1$. From now on we fix $K = \Q (\eta_m)$ and $\Z_K = \Z[\eta_m]$. Recall the morphism $\Upsilon$ defined in (\ref{sec:restricteddefinition}).

\begin{conjecture}[Martino, \cite{Mo}, (2.7)]
Let $\mbf{c},\mbf{h}$ and $\mbf{n}$ be as above.
\begin{enumerate}
\item The partition of $\textsf{Irr} \, G(m,d,n)$ into Rouquier families associated to $\mc{H}_{G(m,d,n)}(\mbf{n})$ refines the\\ $\textsf{CM}_{\mbf{c}}(G(m,d,n))$ partition. For generic values of $\mbf{c}$ the partitions are equal.
\item Let $q \in \Upsilon^{-1}(0)$ and let $K(y)B_1 \oplus \cdots \oplus K(y)B_k$ be the sum of the corresponding Rouquier blocks. Then $\dim \, (\C[ \Upsilon^{*}(0)_q]) = \dim_{K(y)} \, K(y)B_1 \oplus \cdots \oplus K(y)B_k$.
\end{enumerate}
\end{conjecture}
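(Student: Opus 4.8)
The plan is to prove part (i) by identifying the combinatorial description of $\mathsf{CM}_{\mbf{c}}(G(m,d,n))$ furnished by Theorem \ref{thm:mainresult} (together with its Corollary and Lemma \ref{lem:generic}) with the known description of the Rouquier families of $\mc{H}_{G(m,d,n)}(\mbf{n})$, exploiting the fact that both partitions descend from $G(m,1,n)$ through the \emph{same} $C_d$-Clifford mechanism. The starting point is that the conjecture already holds for $W = G(m,1,n)$: by \cite{Mo} the Rouquier families of $\mc{H}_{G(m,1,n)}(\mbf{n})$ refine $\mathsf{CM}_{\mbf{c}}(G(m,1,n))$ and coincide with it for generic $\mbf{c}$, both being governed by the residue equality $\textrm{Res}^{\mbf{s}}_{\underline{\lambda}}(x^e)=\textrm{Res}^{\mbf{s}}_{\underline{\mu}}(x^e)$ of Theorem \ref{thm:Mo}.

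Next I would establish that the Rouquier families of $\mc{H}_{G(m,d,n)}(\mbf{n})$ arise from those of $\mc{H}_{G(m,1,n)}(\mbf{n})$ by precisely the descent already proved for the Calogero--Moser side in Theorem \ref{thm:compare1}. Here $\mc{H}_{G(m,d,n)}(\mbf{n})$ is a twisted symmetric subalgebra of $\mc{H}_{G(m,1,n)}(\mbf{n})$ with respect to $C_d$, so \cite[Proposition 2.3.18]{Chlou3} applies directly; in contrast to the restricted Cherednik algebra, the Hecke algebra \emph{does} admit the split-semisimple field extension underlying Chlouveraki's argument, so no analogue of Proposition \ref{prop:Clifford2} is required. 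The upshot is that a Rouquier family of $G(m,d,n)$ is $\Gamma(\mc{Q}')$ for a Rouquier family $\mc{Q}'$ of $G(m,1,n)$, save that a family equal to $\{\underline{\lambda}\}$ with $\underline{\lambda}$ $d$-stuttering splits into the singletons $\{(\{\underline{\lambda}\},\epsilon)\}$, $\epsilon\in C_d^\vee$ --- exactly the dichotomy (1)/(2) of Theorem \ref{thm:mainresult}.

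Combining these two facts gives the result: since the refinement of the Calogero--Moser partition by the Rouquier families holds for $G(m,1,n)$, and both partitions of $\textsf{Irr}(G(m,d,n))$ are produced from their $G(m,1,n)$ counterparts by the identical rule $\mc{Q}'\mapsto\Gamma(\mc{Q}')$ with the same stuttering convention, the refinement is preserved under descent, yielding conjecture (i). The generic equality is inherited term by term from the residue description in Lemma \ref{lem:generic}.

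I expect the $d$-stuttering case to be the main obstacle. For $\{\underline{\lambda}\}\neq\{\underline{\mu}\}$ and for generic $\mbf{c}$ the comparison reduces to matching residue identities, but one must verify that the splitting of a stuttering singleton into $d$ pieces indexed by $\epsilon\in C_d^\vee$ occurs identically on the Hecke side, i.e. that the stabiliser data controlling the Clifford refinement (Proposition \ref{prop:Clifford2}(1), the analogue of Lemma \ref{lem:singletons}) agrees with the corresponding data for the Rouquier families; this is the real content that forces the two descents to coincide. Throughout, one must exclude the unequal-parameter case $n=2$, $d$ even of (\ref{sec:unequal}), where $G(m,1,n)$-equivariance of $\mbf{c}$ fails. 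Part (ii), a dimension count relating the fibre of $\Upsilon$ over $0$ to the total dimension of the relevant Rouquier blocks, falls outside this Clifford-theoretic comparison and is not established by the present method.
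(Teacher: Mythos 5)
Your treatment of the refinement statement in part (i) follows the paper's route: the descent of Rouquier families from $G(m,1,n)$ to $G(m,d,n)$ that you propose to extract from \cite[Proposition 2.3.18]{Chlou3} is precisely \cite[Theorem 3.10]{Chlou2}, which the paper simply cites, and the comparison with Theorem \ref{thm:mainresult} together with \cite[Corollary 3.13]{Mo} then gives the refinement exactly as you describe (including the dichotomy for $d$-stuttering singletons). You are also right that the paper only establishes part (i); part (ii) is not touched.

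The genuine gap is in your last step, ``the generic equality is inherited term by term from the residue description in Lemma \ref{lem:generic}.'' A parameter $\mbf{c}$ that is generic \emph{for} $G(m,d,n)$ is not generic for $G(m,1,n)$: viewed as a parameter for $G(m,1,n)$ it is constrained to the subspace $c_j=0$ for $j\not\equiv 0 \bmod d$, i.e.\ $\mbf{n}$ lies on the intersection of the essential hyperplanes $n_{\mc{S},i+kp}-n_{\mc{S},i+lp}=0$. Consequently Martino's generic equality for $G(m,1,n)$ does not apply, and one must compute from scratch what the Rouquier families of $\mc{H}_{G(m,1,n)}(\mbf{n})$ are at such a sub-generic point and check that they coincide with the Calogero--Moser classes described in Lemma \ref{lem:generic}. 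This is where the real work of the paper's proof lies: it invokes Chlouveraki's semi-continuity of Rouquier families together with \cite[Lemma 3.6]{Chlou2} to show (Lemma \ref{prop:hyperpartition}) that the Rouquier families at such $\mbf{n}$ are the transitive closure of an explicit pairwise residue relation, and then needs the separate combinatorial Lemma \ref{sec:residueHecke} (itself proved via \cite[Proposition 3.19]{Chlou1}) to convert that transitive closure into the summed-residue condition $\sum_{j}\textrm{Res}_{\lambda^{i+pj}}(x^e)=\sum_{j}\textrm{Res}_{\mu^{i+pj}}(x^e)$ matching the Calogero--Moser side. Without these two steps the generic half of part (i) is not established; your proposal as written skips them.
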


\subsection{}\label{sec:essential} The Rouquier families for $G(m,1,n)$ are calculated by Chlouveraki \cite{Chlou1} using the idea of \textit{essential hyperplanes}. The essential hyperplanes for $G(m,1,n)$ in $\Z^{m+1}$ are of the form $(k n_{\mc{R},0} + n_{\mc{S},i} - n_{\mc{S},j} = 0)$ for $0 \le i < j \le m-1$ and $-m < k < m$, and $(n_{\mc{R},0} = 0)$. 

\begin{defn}
Let $\mbf{n} \in \Z^{m+1}$. 
\begin{itemize}
\item The hyperplane $(k n_{\mc{R},0} + n_{\mc{S},i} - n_{\mc{S},j} = 0)$ is said to be \textit{essential} if there exists a prime ideal $\mf{p}$ of $\Z [\eta_m]$ such that $\eta_m^i - \eta_m^j \in \mf{p}$. The hyperplane $(n_{\mc{R},0} = 0)$ is always assumed to be essential.
\item If $\mbf{n}$ does not belong to any essential hyperplane then $\mbf{n}$ is said to be \textit{generic}.
\item If $\mbf{n}$ belongs to the essential hyperplane $(k n_{\mc{R},0} + n_{\mc{S},i} - n_{\mc{S},j} = 0)$ and $\mbf{n}$ does not belong to any other essential hyperplane then $\mbf{n}$ is said to be a \textit{generic} element of $(k n_{\mc{R},0} + n_{\mc{S},i} - n_{\mc{S},j} = 0)$. 
\end{itemize}
\end{defn}

\subsection{}\label{prop:hyperpartition} If $\mbf{n} \in \Z^{m+1}$ does not belong to any essential hyperplane then the corresponding Rouquier families are independent of the choice of $\mbf{n}$. Similarly, if $\mbf{n}$ is a generic element in some essential hyperplane then the Rouquier families for $\mbf{n}$ are independent of the choice of $\mbf{n}$. A general element $\mbf{n} \in \Z^{m+1}$ will belong to a collection of essential hyperplanes $H_1, \ds, H_k = 0$. It has been shown by Chlouveraki \cite{Chlou3} that Rouquier families have the property of \textit{semi-continuity}. This means that the partition of $\textsf{Irr} \, G(m,1,n)$ into Rouquier families for $\mbf{n}$ is the finest partition of $\textsf{Irr} \, G(m,1,n)$ that is refined by the Rouquier families partition of $\textsf{Irr} \, G(m,1,n)$ associated to each of the essential hyperplanes $H_i = 0$. Therefore if $\underline{\lambda}$ and $\underline{\mu}$ are in the same Rouquier family for some essential hyperplane $H_i = 0$ then they are in the same Rouquier family for $\mbf{n}$. 

\begin{prop}[\cite{Chlou1}, Proposition 3.15]
Let $(n_{\mc{S},i} - n_{\mc{S},j} = 0)$ be an essential hyperplane and choose $\mbf{n}$ to be a generic element of $(n_{\mc{S},i} - n_{\mc{S},j} = 0)$. Then $\underline{\lambda}, \underline{\mu} \in \mc{P}(n,m)$ are in the same Rouquier family of $\mc{R}(y)\mc{H}_{G(m,1,n)}(\mbf{n})$ if and only if
\begin{enumerate}
\item $\lambda^{a} = \mu^a$ for all $a \neq s,t$; and
\item $\textrm{Res} \, {}_{(\lambda^s,\lambda^t)}(x) = \textrm{Res} \, {}_{(\mu^s,\mu^t)}(x)$.
\end{enumerate}
\end{prop}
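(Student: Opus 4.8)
The plan is to determine the Rouquier families directly from the factorisation of the Schur elements of the cyclotomic Hecke algebra $\mc{H}_{G(m,1,n)}(\mbf{n})$ over the Rouquier ring $\mc{R}(y)$. The general principle, taken from the block theory of symmetric algebras over the Rouquier ring, is that two irreducible characters $\chi_{\underline{\lambda}}$ and $\chi_{\underline{\mu}}$ lie in the same Rouquier family if and only if they are linked by a chain of characters along which consecutive Schur elements fail to be coprime at a prime of $\mc{R}(y)$. The input is the explicit product formula for the Schur element $s_{\underline{\lambda}}(y)$ of an Ariki--Koike algebra: up to a monomial it is a product of cyclotomic-type binomials $y^{\,a} - \eta_m^{\,b}\, y^{\,c}$ indexed by pairs of nodes of $\underline{\lambda}$, whose exponents are determined by the contents of the nodes together with the charges $n_{\mc{S},0}, \ds, n_{\mc{S},m-1}$ and the reflection parameter $n_{\mc{R},0}$. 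Since $\mc{R}(y)$ inverts every $y^N - 1$, the only binomials that can force a linking on a given parameter stratum are those that genuinely degenerate there, so the whole computation reduces to tracking which factors become divisible by the single essential prime attached to the hyperplane.

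First I would specialise $\mbf{n}$ to a generic element of $(n_{\mc{S},i} - n_{\mc{S},j} = 0)$, that is, a point lying on this hyperplane but off every other essential hyperplane (the proposition's indices $s,t$ are exactly this pair $i,j$). Genericity then does two things. For any index $a \notin \{i,j\}$ the charge $n_{\mc{S},a}$ is in sufficiently general position relative to all the others---and, since we avoid the hyperplanes $(k\, n_{\mc{R},0} + n_{\mc{S},i'} - n_{\mc{S},j'} = 0)$ with $k \neq 0$, also relative to the reflection parameter---that no binomial factor of $s_{\underline{\lambda}}$ associated to component $a$ can be linked to one of $s_{\underline{\mu}}$ unless the two components agree node-for-node; this is the source of condition (1), $\lambda^a = \mu^a$ for $a \neq i,j$. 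The single relation $n_{\mc{S},i} = n_{\mc{S},j}$, on the other hand, places the nodes of $\lambda^i$ and $\lambda^j$ at a common charge, so the factors coming from these two components interact precisely as in the rank-two (type $B$) situation obtained by reading $(\lambda^i,\lambda^j)$ as a bipartition; the linking relation on such bipartitions is governed by their common residue, which yields condition (2).

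The cleanest way to organise both halves is a charge-separation argument, made rigorous through the Schur-element valuations above: at a generic point of the stratum the block combinatorics splits into independent contributions indexed by the distinct charge-classes of the parameters. The classes $a \neq i,j$ are each isolated and, the reflection parameter being generic, contribute only singleton families---forcing equality of those components---while the merged class $\{i,j\}$ contributes the block structure of a two-parameter algebra, whose families are exactly the fibres of the bipartition residue $\textrm{Res}\,{}_{(\lambda^i,\lambda^j)}(x)$. For the converse direction one checks that any $\underline{\mu}$ satisfying (1) and (2) is linked back to $\underline{\lambda}$ by an explicit sequence of moves transferring nodes between the $i$- and $j$-components along the common charge, each move changing the character but preserving membership in the family; this is the Rouquier-family analogue of the classical description of the blocks of the bipartition algebra by residue.

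The step I expect to be the main obstacle is the genericity bookkeeping carried out in both directions simultaneously. One must verify that on the chosen stratum no \emph{accidental} binomial coincidence links two multipartitions differing outside positions $i,j$, so that condition (1) is genuinely forced and not merely sufficient; and at the same time one must show that the residue equality (2) is both necessary and sufficient within the merged pair, i.e.\ that $\textrm{Res}\,{}_{(\lambda^i,\lambda^j)}(x)$ is a \emph{complete} invariant of the two-component Rouquier family---neither coarser nor finer than the true linking relation. Pinning down that the residue is exactly the right invariant is the crux, and it rests on a careful valuation analysis of the Schur-element binomials at the essential prime of the hyperplane, combined with the semi-continuity of Rouquier families that confines every relevant degeneration to this single hyperplane.
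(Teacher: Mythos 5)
The paper does not prove this statement at all: it is quoted as \cite[Proposition 3.15]{Chlou1}, and the entire content of the paper's ``proof'' is a translation step --- Chlouveraki's result is phrased in terms of weighted contents, and \cite[Proposition 3.4]{BroueKim} converts weighted contents into residues, the weighting $(0,k)$ degenerating to $(0,0)$ because the hyperplane $(n_{\mc{S},i}-n_{\mc{S},j}=0)$ is the case $k=0$ of the general essential hyperplane $(k\,n_{\mc{R},0}+n_{\mc{S},i}-n_{\mc{S},j}=0)$. You instead set out to prove the cited result from scratch via the factorisation of the Schur elements of the Ariki--Koike algebra over the Rouquier ring. That is indeed the strategy by which Brou\'e--Kim and Chlouveraki establish these block descriptions, so your route is the ``right'' one in outline, and it is genuinely different from what the paper does (which is to cite).

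However, as a standalone proof your proposal has a real gap, and you have located it yourself: everything decisive is deferred to ``a careful valuation analysis of the Schur-element binomials at the essential prime,'' which is never performed. Concretely, you do not write down the Schur element $s_{\underline{\lambda}}(y)$, you do not identify which binomial factors acquire the essential prime on the stratum $n_{\mc{S},i}=n_{\mc{S},j}$, and you do not prove either that (a) off-diagonal components $a\neq i,j$ can never be linked without being equal (necessity of condition (1)), or that (b) the bipartition residue $\textrm{Res}\,{}_{(\lambda^i,\lambda^j)}(x)$ is a complete invariant of the linking relation on the merged pair (the equivalence in condition (2)). Point (b) is the entire combinatorial content of the proposition --- it is essentially the classification of Rouquier families for type $B$/two-parameter Ariki--Koike algebras --- and asserting that it ``rests on'' an unperformed analysis is not a proof of it. If you want a self-contained argument you must either carry out that analysis (reproducing a substantial part of \cite{BroueKim} and \cite{Chlou1}) or, as the paper does, cite it; the one genuinely new thing you would then still owe is the dictionary between weighted contents and residues, which you have bypassed by working with residues from the start but which is exactly where the cited literature states its results.
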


\begin{proof}
The result \cite[Proposition 3.15]{Chlou1} is stated in terms of weighted content but \cite[Proposition 3.4]{BroueKim} shows that we can reformulate the result in terms of residues. The weighting is $(0,k)$, which in our case becomes $(0,0)$ since $k = 0$.
\end{proof}

\begin{lem}
Let $\underline{\lambda}, \underline{\mu} \in \mc{P}(m,n)$. We write $\underline{\lambda} \sim \underline{\mu}$ if there exists $0 \le i \le p-1$ and $0 \le j < k \le d-1$ such that $\lambda^a = \mu^a$ for all $a \neq i + jp, i + kp$ and
\bdm
\textrm{Res} \, {}_{(\lambda^{i + jp},\lambda^{i + kp})}(x) = \textrm{Res} \, {}_{(\mu^{i + jp},\mu^{i + kp})}(x).
\edm
Now choose $\mbf{n}$ to be a generic parameter for $\mc{H}_{G(m,d,n)}$. Then the partition of $\textsf{Irr} \, G(m,1,n)$ into Rouquier families for $\mc{R}(y)\mc{H}_{G(m,1,n)}(\mbf{n})$ is the set of equivalence classes in $\textsf{Irr} \, G(m,1,n)$ under the transitive closure of $\sim$.
\end{lem}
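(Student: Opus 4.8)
The plan is to combine the semi-continuity property of Rouquier families recalled in (\ref{prop:hyperpartition}) with the explicit single-hyperplane description furnished by the Proposition of Chlouveraki quoted immediately above. The whole argument rests on first pinning down \emph{exactly} which essential hyperplanes of $G(m,1,n)$ contain the parameter $\mbf{n}$ when it is inflated from $G(m,d,n)$ by $p$-periodic extension.

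First I would record the crucial periodicity of $\mbf{n}$. Since $\mbf{c}$ is the extension by zero of a parameter for $G(m,d,n)$, Lemma \ref{lem:cyclicparameters} shows that $\mbf{h}$ is $p$-cyclic, i.e. $H_{i+p} = H_i$ for all $i$. Summing $\sum_{i=0}^{m-1} H_i = 0$ over the $d$ periods gives $\sum_{i=0}^{p-1} H_i = 0$, so from $n_{\mc{S},j} = e \sum_{i=1}^{j} H_i$ one deduces $n_{\mc{S},j+p} = n_{\mc{S},j}$ for every $j$. Consequently $\mbf{n}$ lies on every essential hyperplane $(n_{\mc{S},s} - n_{\mc{S},t} = 0)$ with $s \equiv t \pmod p$. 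Moreover, because $n_{\mc{R},0} = e \neq 0$ and $\mbf{n}$ is generic for $\mc{H}_{G(m,d,n)}$ (so that the residual differences $n_{\mc{S},s} - n_{\mc{S},t}$ with $s \not\equiv t \pmod p$, and the combinations $\kappa\, n_{\mc{R},0} + n_{\mc{S},s} - n_{\mc{S},t}$ with $\kappa \neq 0$, are nonzero), these are the \emph{only} essential hyperplanes of $G(m,1,n)$ through $\mbf{n}$. Writing $s = i + jp$ and $t = i + kp$ with $0 \le i \le p-1$ and $0 \le j < k \le d-1$ parametrises precisely these hyperplanes.

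Next, for each such hyperplane the quoted Proposition identifies its Rouquier family partition: $\underline{\lambda}$ and $\underline{\mu}$ lie in the same family for $(n_{\mc{S},s} - n_{\mc{S},t} = 0)$ if and only if $\lambda^a = \mu^a$ for all $a \neq s,t$ and $\textrm{Res}_{(\lambda^s,\lambda^t)}(x) = \textrm{Res}_{(\mu^s,\mu^t)}(x)$, the weighting being trivial because the coefficient of $n_{\mc{R},0}$ in such a hyperplane is zero. This is exactly one instance of the relation $\sim$, so $\sim$ is the union, over all these hyperplanes, of the associated single-hyperplane equivalences. Finally I would invoke the semi-continuity recalled in (\ref{prop:hyperpartition}): the Rouquier family partition of $\mc{R}(y)\mc{H}_{G(m,1,n)}(\mbf{n})$ is the finest partition refined by the family partition of each essential hyperplane through $\mbf{n}$; equivalently it is the join of those partitions, which is computed as the transitive closure of the union of their equivalence relations, that is, the transitive closure of $\sim$.

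The main obstacle is the bookkeeping in the second step: verifying that a parameter generic for $G(m,d,n)$, once inflated $p$-periodically to a parameter for $G(m,1,n)$, meets exactly the hyperplanes $(n_{\mc{S},s} - n_{\mc{S},t} = 0)$ with $s \equiv t \pmod p$ and no others. Everything afterwards is a formal application of semi-continuity, together with the matching of the index conventions $s = i + jp$, $t = i + kp$ between the quoted Proposition and the definition of $\sim$.
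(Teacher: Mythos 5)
There is a genuine gap in the second step, and it is precisely the point the paper has to work hardest to handle. You correctly identify that the $p$-cyclicity of $\mbf{h}$ forces $n_{\mc{S},s}=n_{\mc{S},t}$ whenever $s\equiv t \pmod p$, and that genericity rules out all other coincidences. But you then treat every hyperplane $(n_{\mc{S},s}-n_{\mc{S},t}=0)$ with $s\equiv t\pmod p$ as \emph{essential} and feed each one into Proposition \ref{prop:hyperpartition} and the semi-continuity principle. Essentiality is not automatic: by the definition in (\ref{sec:essential}) the hyperplane $(n_{\mc{S},s}-n_{\mc{S},t}=0)$ is essential only if $\eta_m^{s}-\eta_m^{t}$ lies in some prime ideal $\mf{p}$ of $\Z[\eta_m]$, i.e.\ only if $\eta_m^{s-t}-1$ is a non-unit, which happens exactly when $m/\gcd(m,s-t)$ is a prime power. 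Writing $s-t=(k-j)p$ this condition becomes that $d/\gcd(d,k-j)$ is a prime power, and it fails as soon as $d$ has two distinct prime factors (e.g.\ $d=6$, $k-j=1$). Semi-continuity only asserts that the Rouquier partition for $\mbf{n}$ is generated by the single-hyperplane partitions attached to the \emph{essential} hyperplanes through $\mbf{n}$; the non-essential ones contribute nothing. So your argument only produces the transitive closure of a subrelation of $\sim$ (what the paper calls ``linked''), and a priori this could be strictly finer than the transitive closure of $\sim$.

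The paper closes exactly this gap. After noting that ``not all of these hyperplanes will be essential,'' it invokes \cite[Lemma 3.6]{Chlou2}, which says that the transpositions $(i,i+jp)$ acting on the components of a multipartition preserve Rouquier families for $\mc{R}(y)\mc{H}_{G(m,1,n)}(\mbf{n})$. This has two consequences: first, it guarantees (for $d>1$) that at least one hyperplane $(n_{\mc{S},i}-n_{\mc{S},i+lp}=0)$ \emph{is} essential; second, it allows one to conjugate an arbitrary $\sim$-step, supported on a possibly non-essential hyperplane $(n_{\mc{S},i+jp}-n_{\mc{S},i+kp}=0)$, into a ``linked'' step supported on that essential hyperplane, producing the required chain. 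Your proof needs this additional ingredient (or some substitute for it); without it the claimed equality of the two partitions is not established, only that the Rouquier partition refines the $\sim$-partition.
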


\begin{proof}
Since $\mbf{n}$ is generic for $\mc{H}_{G(m,d,n)}$, the parameter $\mbf{h}$ satisfies $H_{i + p} = H_i$ for all $i$ and no other linear relations. Therefore it follows from (\ref{sec:fixparametersconjecture}) that the only hyperplanes that might be essential for $\mbf{n}$ (now considered a parameter for $\mc{H}_{G(m,1,n)}$) are of the form $(n_{\mc{S},i +jp} - n_{\mc{S},i + kp} = 0)$ for $0 \le i \le p-1$ and $0 \le j < k \le d-1$. However not all of these hyperplanes will be essential. Let us say that the $m$-multi-partition $\underline{\lambda}$ is \textit{linked} to the $m$-multi-partition $\underline{\mu}$ if there exists an essential hyperplane $(n_{\mc{S},i +jp} - n_{\mc{S},i + kp} = 0)$ containing $\mbf{n}$ such that 
$$
\textrm{Res} \, {}_{(\lambda^{i + jp},\lambda^{i + kp})}(x) = \textrm{Res} \, {}_{(\mu^{i + jp},\mu^{i + kp})}(x).
$$
Then, by Proposition \ref{prop:hyperpartition} and the principal of semi-continuity, the Rouquier families for $\mc{R}(y)\mc{H}_{G(m,1,n)}(\mbf{n})$ are the set of equivalence classes in $\textsf{Irr} \, G(m,1,n)$ under the transitive closure of ``linked''. Since $\underline{\lambda}$ linked $\underline{\mu}$ implies that $\underline{\lambda} \sim \underline{\mu}$, the Rouquier families refine the partition defined by $\sim$. Therefore we must show that if $\underline{\lambda} \sim \underline{\mu}$ (via $i + jp, i+ kp$ say) then there exists a chain of $m$-multi-partitions $\underline{\lambda} = \underline{\lambda}_1, \ds, \underline{\lambda}_q =  \underline{\mu}$ such that $\underline{\lambda}_{\alpha}$ is linked to $\underline{\lambda}_{\alpha+1}$ for all $1 \le \alpha \le q-1$. For each $0 \le i \le p-1$ and $0 \le j \le d-1$, the result \cite[Lemma 3.6]{Chlou2} says that the multi-partitions $\underline{\lambda}$ and $(i, i+ j p) \cdot \underline{\lambda}$ belong to the same Rouquier family for $\mc{R}(y)\mc{H}_{G(m,1,n)}(\mbf{n})$, where $(i, i + jp)$ is the transposition swapping the partitions $\lambda^{i}$ and $\lambda^{i + j p}$. In particular, this result (assuming that $d > 1$) shows that there exists some $l \neq 0$ such that the hyperplane $(n_{\mc{S},i} - n_{\mc{S},i + l p } = 0)$ is essential. Applying the result \cite[Lemma 3.6]{Chlou2}, we see that $\underline{\lambda}$ is in the same Rouquier family as 
$$
\underline{\lambda}'  := (i, i+ kp) \cdot (i + l p, i + j p) \cdot \underline{\lambda}
$$ 
and $\underline{\mu}$ is in the same Rouquier family as
$$
\underline{\mu}'  := (i, i+ kp) \cdot (i + l p, i + j p) \cdot \underline{\mu}.
$$ 
Now $(\lambda')^a = (\mu')^a$ for all $a \neq i, i + lp$ and 
\bdm
\textrm{Res} \, {}_{((\lambda')^{i},\lambda^{i + lp})}(x) = \textrm{Res} \, {}_{((\mu')^{i},\mu^{i + lp})}(x).
\edm
Since the hyperplane $(n_{\mc{S},i} - n_{\mc{S},i + l p } = 0)$ is essential, this implies that $\underline{\lambda}'$ is linked to $\underline{\mu}'$ and there must be a chain from $\underline{\lambda}$ to $\underline{\mu}$ as required.
\end{proof}

\subsection{}\label{sec:residueHecke} We will require the following combinatorial result. The proof uses the representation theory of cyclotomic Hecke algebras, it would be interesting to have a direct combinatorial proof.   

\begin{lem}
Let $\underline{\lambda}$ and $\underline{\mu}$ be two $m$-multi-partitions of $n$. Then $\textrm{Res}_{\underline{\lambda}}(x) = \textrm{Res}_{\underline{\mu}}(x)$ if and only if there exists a sequence of multipartitions $\underline{\lambda} = \underline{\lambda}(1), \ds, \underline{\lambda}(k) = \underline{\mu} \in \mc{P}(m,n)$ and $s(i) \neq t(i) \in \{1, \ds, m \}$, $1 < i \le k$, such that 
\begin{enumerate}
\item $\lambda(i-1)^a = \lambda(i)^a$ for all $a \neq s(i), t(i)$; and 
\item $\textrm{Res} \, {}_{(\lambda(i-1)^{s(i)},\lambda(i-1)^{t(i)})}(x) = \textrm{Res} \, {}_{(\lambda(i-1)^{s(i)},\lambda(i-1)^{t(i)})}(x), \quad \forall \, 1 < i \le k$.
\end{enumerate} 
\end{lem}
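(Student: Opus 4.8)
The plan is to realise both sides of the equivalence as Rouquier families of the generic Hecke algebra $\mc{H}_{G(m,1,n)}$ for a suitably special parameter and then to squeeze them against the partition into total-residue fibres. The ``if'' direction is immediate: in each move $(1)$--$(2)$ only the two components $s(i),t(i)$ are altered, and their combined residue is left unchanged, so $\textrm{Res}_{\underline{\lambda}(i-1)}(x) = \textrm{Res}_{\underline{\lambda}(i)}(x)$ for every $i$; summing the residues over all components gives $\textrm{Res}_{\underline{\lambda}}(x) = \textrm{Res}_{\underline{\mu}}(x)$. Thus the transitive closure of the moves appearing in the statement refines the partition into fibres of the map $\underline{\lambda} \mapsto \textrm{Res}_{\underline{\lambda}}(x)$.

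For the reverse inclusion I would take the integral parameter $\mbf{n} = (1,0,\ds,0) \in \Z^{m+1}$, that is $n_{\mc{R},0} = 1$, $n_{\mc{R},1} = 0$ and $n_{\mc{S},j} = 0$ for all $j$. Comparing with the list of essential hyperplanes in (\ref{sec:essential}), this $\mbf{n}$ lies on precisely the essential hyperplanes $(n_{\mc{S},i} - n_{\mc{S},j} = 0)$: the hyperplane $(n_{\mc{R},0} = 0)$ is avoided since $n_{\mc{R},0} = 1$, while $k n_{\mc{R},0} + n_{\mc{S},i} - n_{\mc{S},j}$ reduces to $k$ and so vanishes only for $k = 0$. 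By semi-continuity and Proposition \ref{prop:hyperpartition}, the Rouquier families of $\mc{R}(y)\mc{H}_{G(m,1,n)}(\mbf{n})$ are then exactly the classes of the transitive closure of the essential-pair moves of the statement. On the other hand, under the dictionary of (\ref{sec:fixparametersconjecture}) this $\mbf{n}$ corresponds to $e = 1$ and $\mbf{h} = (-1,0,\ds,0)$, so the shift vector of (\ref{thm:Mo}) is $\mbf{s} = 0$; Theorem \ref{thm:Mo} then identifies the blocks of $\mathsf{CM}_{\mbf{c}}(G(m,1,n))$ with the fibres of $\underline{\lambda} \mapsto \textrm{Res}_{\underline{\lambda}}(x)$, and since these blocks coincide with the Rouquier families for $G(m,1,n)$ by \cite{Mo}, the Rouquier families for $\mbf{n}$ are precisely the total-residue fibres.

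Assembling these observations gives three partitions ordered by refinement: the closure of the essential-pair moves is contained in the closure of all the moves of the statement, which by the first paragraph is contained in the partition into total-residue fibres, which by the second paragraph equals the closure of the essential-pair moves. Hence all three coincide, which is exactly the assertion of the lemma. The point I expect to require most care is the third link in this squeeze, namely that for the special parameter $\mbf{n}$ the essential hyperplanes $(n_{\mc{S},i} - n_{\mc{S},j} = 0)$ already generate the full total-residue fibration even though they need not range over every pair $(i,j)$; this is what forces the essential-pair moves to be as coarse as the all-pair moves of the statement, and it is precisely here that the representation-theoretic input (Theorem \ref{thm:Mo} together with the coincidence of the Calogero-Moser and Rouquier partitions for $G(m,1,n)$) is indispensable, there being no purely combinatorial substitute available.
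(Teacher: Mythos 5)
Your choice of the special parameter $\mbf{n} = (1,0,\ds,0)$ is exactly the one the paper uses, and your ``if'' direction is fine. But the third link of your squeeze --- that at this parameter the total-residue fibres coincide with the Rouquier families of $\mc{R}(y)\mc{H}_{G(m,1,n)}(\mbf{n})$ ``by \cite{Mo}'' --- is not available. What \cite{Mo} gives (and what this paper quotes from it) is that the Rouquier families \emph{refine} the Calogero-Moser partition, with equality only for \emph{generic} parameters. That refinement is the wrong direction for your squeeze: combined with Theorem \ref{thm:Mo} it only reproduces the easy inclusion (essential-pair closure $\subseteq$ total-residue fibres), whereas you need the reverse. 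And the parameter $\mbf{h} = (-1,0,\ds,0)$ is maximally non-generic for $G(m,1,n)$ --- it lies on every hyperplane $(n_{\mc{S},i} - n_{\mc{S},j} = 0)$ --- so the generic-equality clause of \cite{Mo} does not apply. Worse, within this paper the equality of the two partitions at such degenerate parameters is itself \emph{deduced from the present lemma} (together with Lemma \ref{prop:hyperpartition}) in the proof of the final theorem, so invoking it here would be circular.

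The genuinely hard content of the statement is precisely the point you flag at the end: that the fibres of $\underline{\lambda} \mapsto \textrm{Res}_{\underline{\lambda}}(x)$ are generated by pairwise residue-preserving moves (indeed by the possibly smaller set of \emph{essential} pairwise moves, since for composite $m$ the element $\eta_m^i - \eta_m^j$ can be a unit and the hyperplane $(n_{\mc{S},i} - n_{\mc{S},j}=0)$ then fails to be essential). None of the results quoted elsewhere in the paper supplies this. The paper's own proof does not attempt a squeeze at all: it simply observes that for $\mbf{n} = (1,0,\ds,0)$ the statement \emph{is} \cite[Proposition 3.19]{Chlou1}, translated from weighted contents into residues via \cite[Proposition 3.4]{BroueKim}. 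That proposition is where the representation theory of cyclotomic Hecke algebras actually enters, and it is the external input your argument is missing; to repair your proof you would have to import it (or an equivalent statement) directly, at which point the squeeze becomes unnecessary.
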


\begin{proof}
Let us fix $\mbf{n} = (n_{\mc{R},0},n_{\mc{R},1},n_{\mc{S},0}, \ds , n_{\mc{S},m-1})$ with $n_{\mc{R},0} = 1$ ,$n_{\mc{R},1} = 0$ and $n_{\mc{S},i} = 0$ for all $0 \le i \le m-1$. Then the Lemma is the result \cite[Proposition 3.19]{Chlou1} for our special parameter $\mbf{n}$, noting once again that \cite[Proposition 3.4]{BroueKim} allows us to rephrase \cite[Proposition 3.19]{Chlou1}, which is stated in terms of weighted content, in language of residues.
\end{proof}

\subsection{} We can now confirm the first part of Martino's conjecture for $G(m,d,n)$.

\begin{thm}
Let $\mbf{c} \, : \, \mc{S}(G(m,d,n)) \rightarrow \C$ be a $G(m,1,n)$-equivariant function such that $k = -1$ and $\mbf{h} \in \Q^{m+1}$. Choose $e \in \N$ such that $eh$ and $eH_i \in \Z$ for all $0 \le i \le m-1$. Fix $n_{\mc{R},0} = e, n_{\mc{R},1} = 0$ and $n_{\mc{S},j} = e \sum_{i = 1}^j H_i$ for $0 \le j \le m - 1$. Then
\begin{enumerate}
\item the partition of $\textsf{Irr} \, G(m,d,n)$ into Rouquier families associated to $\mc{H}_{G(m,d,n)}(\mbf{n})$ refines the\\ $\textsf{CM}_{\mbf{c}}(G(m,d,n))$ partition;
\item the partition of $\textsf{Irr} \, G(m,d,n)$ into Rouquier families associated to $\mc{H}_{G(m,d,n)}(\mbf{n})$ equals the\\ $\textsf{CM}_{\mbf{c}}(G(m,d,n))$ partition for generic values of the parameter $\mbf{c}$. 
\end{enumerate}
\end{thm}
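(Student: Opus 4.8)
The plan is to prove both statements by reducing the comparison of the two partitions on $\textsf{Irr}\,G(m,d,n)$ to the analogous comparison on $\textsf{Irr}\,G(m,1,n)$, where both the Calogero--Moser partition and the Rouquier families admit explicit combinatorial descriptions. The structural fact that makes this work is that the two partitions of $G(m,d,n)$ are produced from the corresponding partitions of $G(m,1,n)$ by one and the \emph{same} Clifford-theoretic rule. On the Calogero--Moser side this is exactly Theorem \ref{thm:mainresult}, which realises $\mathsf{CM}_{\mbf{c}}(G(m,d,n))$ as the descent, via $\Gamma$ and the $C_d$-action, of $\mathsf{CM}_{\mbf{c}}(G(m,1,n))$ computed at the $p$-cyclic parameter $\mbf{h}$ forced by Lemma \ref{lem:cyclicparameters}. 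On the Hecke side one uses that the cyclotomic Hecke algebras $\mc{H}_{G(m,1,n)}(\mbf{n})$ and $\mc{H}_{G(m,d,n)}(\mbf{n})$ also fit the twisted-symmetric-algebra framework over $C_d$, so that Chlouveraki's Clifford theory from \cite{Chlou3} expresses the Rouquier families of $G(m,d,n)$ as the descent of those of $G(m,1,n)$ by the identical rule, with the same exceptional splitting of $d$-stuttering singleton blocks (the Hecke analogue of Proposition \ref{prop:Clifford2}). Once this is in place it suffices to compare the two partitions of $G(m,1,n)$.

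For statement $(1)$ I would invoke Martino's conjecture in the already-established case $W=G(m,1,n)$ of \cite{Mo}: at every rational parameter, and in particular at our $p$-cyclic $\mbf{h}$, the Rouquier families of $G(m,1,n)$ refine $\mathsf{CM}_{\mbf{c}}(G(m,1,n))$. It then remains to check that the common descent is order-preserving, which is a short bookkeeping argument. If a block of the finer (Rouquier) partition is not a $d$-stuttering singleton, then the coarser (Calogero--Moser) block containing it also fails to be a $d$-stuttering singleton, for otherwise the finer block, being contained in a singleton, would equal that singleton; hence the $\Gamma$-images nest correctly. The only remaining case is that the two irreducibles being compared coincide, which is harmless. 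Thus the descent of a refinement is again a refinement, giving $(1)$.

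For statement $(2)$ I would show that the two partitions of $G(m,1,n)$ actually coincide at a generic $p$-cyclic parameter, and then use that the descent is a function of the upstairs partition, so equality upstairs forces equality downstairs. The combinatorics is the heart of the matter. By the proof of Lemma \ref{lem:generic}, genericity makes $\underline{\lambda},\underline{\mu}$ lie in the same $\mathsf{CM}_{\mbf{c}}(G(m,1,n))$-block precisely when $\sum_{i=0}^{d-1}\textrm{Res}_{\lambda^{j+pi}}(x^e)=\sum_{i=0}^{d-1}\textrm{Res}_{\mu^{j+pi}}(x^e)$ for every $0\le j\le p-1$, while the Lemma in $(\ref{prop:hyperpartition})$ describes the Rouquier families at a generic $p$-cyclic $\mbf{n}$ as the transitive closure of the relation $\sim$, whose elementary moves alter two components $\lambda^{i+jp},\lambda^{i+kp}$ of a fixed residue class modulo $p$ without changing the residue of the pair. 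Each such move manifestly preserves the residue sums, so the Rouquier families refine the Calogero--Moser partition. For the reverse inclusion I would fix a residue class $j$ and read the equality of residue sums as the equality $\textrm{Res}_{\underline{\lambda}^{(j)}}(x^e)=\textrm{Res}_{\underline{\mu}^{(j)}}(x^e)$ of total residues of the $d$-multipartitions $\underline{\lambda}^{(j)}=(\lambda^{j},\lambda^{j+p},\dots,\lambda^{j+(d-1)p})$; Lemma \ref{sec:residueHecke} then supplies a chain of pairwise residue-preserving moves within the class $j$ connecting $\underline{\lambda}^{(j)}$ to $\underline{\mu}^{(j)}$, and each such move is an elementary $\sim$-move on the full $m$-multipartition. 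Concatenating these chains over $j=0,\dots,p-1$, where moves in distinct classes are independent, connects $\underline{\lambda}$ to $\underline{\mu}$; hence the two partitions of $G(m,1,n)$ agree, and descending gives $(2)$.

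The step I expect to be the main obstacle is the first one: verifying that the Rouquier families of $G(m,d,n)$ are obtained from those of $G(m,1,n)$ by exactly the same descent rule as the Calogero--Moser partition, with the identical exceptional treatment of $d$-stuttering multipartitions. This is what legitimises comparing the two partitions upstairs, and it rests on placing the cyclotomic Hecke algebras into the twisted-symmetric-algebra formalism of \cite{Chlou3} and matching the Clifford-theoretic output with Proposition \ref{prop:Clifford2}. A secondary subtlety is the passage between $\textrm{Res}(x^e)$ and $\textrm{Res}(x)$ needed to apply Lemma \ref{sec:residueHecke}, together with the verification that the intermediate multipartitions it produces preserve the total size $n$; the latter follows by evaluating residues at $x=1$, which guarantees the chain stays inside $\mc{P}(m,n)$.
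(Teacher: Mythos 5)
Your proposal is correct and follows essentially the same route as the paper: both partitions of $G(m,d,n)$ are descended from $G(m,1,n)$ by the same Clifford-theoretic rule (with the same $d$-stuttering exception), part (1) then follows from Martino's refinement result upstairs, and part (2) from matching the generic Calogero--Moser description of Lemma \ref{lem:generic} against the $\sim$-description of the Rouquier families via Lemma \ref{sec:residueHecke}. The one step you flag as the ``main obstacle'' --- that the Rouquier families of $G(m,d,n)$ descend from those of $G(m,1,n)$ by the identical rule --- is precisely Chlouveraki's Theorem 3.10 in \cite{Chlou2}, which the paper simply cites, so no new argument is needed there.
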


\begin{proof}
It is shown in \cite[Theorem 3.10]{Chlou2} that if $\underline{\lambda}$ is a $d$-stuttering $m$-multi-partition of $n$ such that $\{ \underline{\lambda} \}$ is a Rouquier family for $\mc{R}(y) \mc{H}_{G(m,1,n)}(\mbf{n})$ then the sets $\{ (\underline{\lambda}, \epsilon) \}$, $\epsilon \in C^\vee_{d}$, are Rouquier families for $\mc{R}(y) \mc{H}_{G(m,1,n)}(\mbf{n})$. This agrees with Theorem \ref{thm:mainresult} (1). The second part of \cite[Theorem 3.10]{Chlou2} shows that if $\mc{P}$ is a Rouquier family for $\mc{R}(y) \mc{H}_{G(m,1,n)}(\mbf{n})$ not of the type just described then, in the notation of Theorem \ref{thm:compare1}, $\Gamma( \mc{P})$ is a Rouquier family for $\mc{R}(y) \mc{H}_{G(m,d,n)}(\mbf{n})$. The result \cite[Corollary 3.13]{Mo} shows that the partition of $\textsf{Irr} \, G(m,1,n)$ into Rouquier families associated to $\mc{H}_{G(m,1,n)}(\mbf{n})$ refines the $\textsf{CM}_{\mbf{c}}(G(m,1,n))$ partition. Therefore there exists a $\textsf{CM}_{\mbf{c}}(G(m,1,n))$-partition $\mc{Q}$ such that $\mc{P} \subseteq \mc{Q}$. By Theorem \ref{thm:mainresult} (2), $\Gamma(Q)$ is a $\textsf{CM}_{\mbf{c}}(G(m,d,n))$-partition. Thus $\Gamma(\mc{P}) \subseteq \Gamma(\mc{Q})$ implies that the partition of $\textsf{Irr} \, G(m,d,n)$ into Rouquier families refines the $\textsf{CM}_{\mbf{c}}(G(m,d,n))$ partition.\\
Now let $\mbf{c}$ be a generic parameter for the rational Cherednik algebra associated to $G(m,d,n)$. We think of $\mbf{c}$ as a parameter for the rational Cherednik algebra associated to $G(m,1,n)$. Thus it is a generic point of the subspace defined by $c_j = 0$ for all $j \not\equiv 0 \, \mod \, d$. Correspondingly, $\mbf{n}$ is a generic point in the sublattice of $\Z^{m+1}$ defined by the equations $n_{\mc{S},i + kp} - n_{\mc{S},i + lp} = 0$ for $0 \le i \le p-1$ and $0 \le k < l \le d-1$. We wish to show that the Calogero-Moser partition of $\textsf{Irr} \, G(m,d,n)$ equals the partition of $\textsf{Irr} \, G(m,d,n)$ into Rouquier families. As explained in the previous paragraph, \cite[Theorem 3.10]{Chlou2} and Theorem \ref{thm:mainresult} imply that it suffices to show that the Calogero-Moser partition of $\textsf{Irr} \, G(m,1,n)$ for $\mbf{c}$ equals the partition of $\textsf{Irr} \, G(m,1,n)$ into Rouquier families for $\mbf{n}$. The proof of Lemma \ref{lem:generic} shows that $\underline{\lambda}, \underline{\mu} \in \mc{P}(m,n)$ are in the same Calogero-Moser partition of $\textsf{Irr} \, G(m,1,n)$ if and only if 
\bdm
\sum_{j = 0}^{d-1} \textrm{Res}_{\lambda^{i + pj}}(x^e) = \sum_{j = 0}^{d-1} \textrm{Res}_{\mu^{i + pj}}(x^e) \quad \forall \, 0 \le i \le p-1.
\edm
Combining the results Lemma \ref{prop:hyperpartition} and Lemma \ref{sec:residueHecke} shows that $\underline{\lambda}, \underline{\mu} \in \mc{P}(m,n)$ are in the same Rouquier family of $\mc{R}(y)\mc{H}_{G(m,1,n)}(\mbf{n})$ if and only if the same condition holds. 
\end{proof}

\section*{Acknowledgements}

The research described here was done both at the University of Edinburgh with the financial support of the EPSRC and during a visit to the University  of Bonn with the support of a DAAD scholarship. This material will form part of the author's PhD thesis for the University of Edinburgh. The author would like to express his gratitude to his supervisor, Professor Iain Gordon, for his help, encouragement and patience. He also thanks Dr. Maurizio Martino, Dr. Maria Chlouveraki and Professor Ken Brown for many fruitful discussions and Professor C\'edric Bonnaf\'e for pointing out an error in an earlier version of the article.

\bibliographystyle{plain}
\bibliography{biblo}

\end{document}